\setlist{leftmargin=*}
\def\@tocline#1#2#3#4#5#6#7{\relax
  \ifnum #1>\c@tocdepth % then omit
  \else
    \par \addpenalty\@secpenalty\addvspace{#2}%
    \begingroup \hyphenpenalty\@M
    \@ifempty{#4}{%
      \@tempdima\csname r@tocindent\number#1\endcsname\relax
    }{%
      \@tempdima#4\relax
    }%
    \parindent\z@ \leftskip#3\relax \advance\leftskip\@tempdima\relax
    \rightskip\@pnumwidth plus4em \parfillskip-\@pnumwidth
    #5\leavevmode\hskip-\@tempdima
      \ifcase #1
       \or\or \hskip 1em \or \hskip 2em \else \hskip 3em \fi%
      #6\nobreak\relax
    \dotfill\hbox to\@pnumwidth{\@tocpagenum{#7}}\par
    \nobreak
    \endgroup
  \fi}
\newtheorem{thm}{Theorem}[section] 
\newtheorem{cor}[thm]{Corollary}
\newtheorem{lem}[thm]{Lemma} 
\newtheorem{prop}[thm]{Proposition}
\newtheorem{claim}[thm]{Claim} 
\newtheorem{fact}[thm]{Fact}
\theoremstyle{definition} 
\newtheorem{defn}[thm]{Definition}
\newtheorem{rem}[thm]{Remark}
\newtheorem{sample}[thm]{Example} 
\numberwithin{equation}{section}
\newcommand{\RR}{\mathbb{R}}
\newcommand{\CC}{\mathbb{C}}
\gdef\CF{\mathcal F}
\newcommand{\ZZ}{\mathbb{Z}}
\newcommand{\NN}{\mathbb N}
\newcommand{\la}{\langle}
\newcommand{\ra}{\rangle}
\newcommand{\PP}{\mathbb{P}} 
\gdef\tp{{\mathrm{tp}}}
\newcommand{\sub}{\subseteq}
\def\CL{\mathcal{L}}
\def\st{\operatorname{st}}
\def\cl{\operatorname{cl}}
\def\res{\mathrm{res}}
\def\CA{\mathcal A}
\def\CO{\mathcal O}
\def\full{{\mathrm{full}}}
\def\FR{\mathfrak{R}}
\def\FC{\mathfrak{C}}
\gdef\CX{\mathcal{X}}
\gdef\CY{\mathcal{Y}}
\gdef\CP{\mathcal{P}}
\gdef\ri{\mathrm{i}}
\gdef\Stab{{\mathrm{Stab}}}
\gdef\val{{\mathrm{val}}}
\gdef\sa{{\mathrm{sa}}}
\gdef\om{{\mathrm{om}}}
\gdef\al{{\mathrm{a}}}
\gdef\tame{{\mathrm{tame}}}
\gdef\Graff{{\mathrm{Graff}}}
\gdef\RRL{\mathbb{R}\mathrm{L}}
\gdef\CCL{\mathbb{C}\mathrm{L}}
\begin{document}

\title[flows on tori]{Algebraic and o-minimal flows on complex and real tori}

\author{Ya'acov Peterzil}
\address{University of Haifa}
\email{kobi@math.haifa.ac.il}
\author{Sergei Starchenko}
\address{University of Notre Dame}
\email{sstarche@nd.edu}
\thanks{Both authors thank the Israel-US Binational Science Foundation for its
  support. The second author was supported by the NSF research grant DMS-1500671}

\begin{abstract}
We consider the covering map $\pi:{\mathbb C}^n\to \mathbb T$ of a compact complex torus.
Given an algebraic variety $X\subseteq {\mathbb C}^n$ we describe the topological closure of
$\pi(X)$ in $\mathbb T$. We obtain a similar description when $\mathbb T$ is a real
torus and $X\subseteq {\mathbb R}^n$ is a set definable in an o-minimal structure over the reals.
\end{abstract}

 \maketitle

\vspace{-10pt}

\tableofcontents

\section{Introduction}
\label{sec:introduction}

 Let $A$ be a complex abelian variety of dimension $n$,  and let $\pi:\CC^n \to A$ be its covering map.
It follows from  a theorem of Ax (see \cite [Theorem 3]{ax1}), that if $X\sub \CC^n$
is an algebraic variety then the Zariski closure of $\pi(X)$ is a union  of finitely
many cosets of abelian subvarieties of $A$.

In \cites{UY, flow}, Ullmo and Yafaev attempt to characterize the \emph{topological} 
closure of $\pi(X)$ in the above setting and also when $X$ is a set
definable in an o-minimal expansion of the real field.

 They prove a similar result to Ax's for algebraic curves (see \cite[Theorem 2.4]{UY}:
 If $X\sub \CC^n$ is an irreducible algebraic curve then the topological closure of
$\pi(X)$ in $A$ is
 \[\cl(\pi(X))=\pi(X)\cup \bigcup_{k=1}^m Z_k,\]
where each $Z_k$ is a real weakly
 special subvariety of $A$, namely a coset of a real Lie subgroup of $A$.
 They conjecture that the same is true  for algebraic
 subvarieties  $X\sub \CC^n$ of arbitrary dimension.

 In this article we give a full description of $\cl(\pi(X))$ when $X$ is an  algebraic
 subvariety of $\CC^n$  of arbitrary dimension and also when $X\sub \RR^n$ is
 definable in an o-minimal structure over the reals and
 $\pi:\mathbb R^n\to \mathbb T$ is the covering map of a compact
 real torus.

As  we
 show, the conjecture from \cite{UY} fails as stated (see Section~\ref{sec:examples})
and we prove a modified version by showing that the frontier of
$\pi(X)$ consists of finitely many  families of real weakly
special subvarieties. Our theorem holds for arbitrary compact
complex tori and not only for abelian varieties.

 \begin{thm}\label{intro:thm1} Let $\pi:\CC^n\to \mathbb T$ be the covering
 map of a compact complex torus and
  let $X$ be an algebraic subvariety of $\CC^n$.
 Then there are finitely  many algebraic subvarieties $C_1,\ldots, C_m\sub
 \CC^n$ and finitely many real subtori (i.e real Lie subgroups)
 $\mathbb{T}_1,\ldots, \mathbb{T}_m\sub \mathbb T$ of positive
 dimension such that
\[\cl(\pi(X))=\pi(X)\cup \bigcup_{i=1}^m (\pi(C_i)+\mathbb{T}_i).\]

In addition,
\begin{enumerate}[(i)]

\item For every $i=1,\ldots, m$, we have $\dim_\CC C_i<\dim_\CC
X$.

\item   If $\mathbb{T}_i$ is maximal with respect to inclusion
among the subtori then $C_i$ is finite.
\end{enumerate}
\end{thm}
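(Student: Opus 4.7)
The plan is to analyze the frontier $F := \cl(\pi(X)) \setminus \pi(X)$ by tracking how unbounded sequences in $X$ must be ``balanced'' by translations in the lattice $\Lambda := \ker \pi$. A point $y \in F$ can be written as $y = \pi(p)$ with $p = \lim_{k} (x_k - \lambda_k)$ for some $x_k \in X$ and $\lambda_k \in \Lambda$; since $X$ is closed in $\CC^n$ and $y \notin \pi(X)$, one must have $|x_k| \to \infty$ and hence $|\lambda_k| \to \infty$. After passing to subsequences, the unit vectors $x_k/|x_k|$ converge to some $v \in \CC^n$, and boundedness of $x_k - \lambda_k$ forces $\lambda_k/|\lambda_k| \to v$ as well. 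In particular $v$ lies both in the asymptotic cone of $X$ (accessible via the projective closure $\overline{X} \sub \PP^n$) and in the $\RR$-span of lattice directions.

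The second step is to identify the real subtori. The observation that $v$ is an accumulation of lattice directions forces $v$ to lie in a real subspace $V \sub \CC^n$ such that $\Lambda \cap V$ has maximal rank in $V$, equivalently $\pi(V)$ is a real Lie subgroup $\mathbb{T}_V$ of $\mathbb{T}$. I would take $V_i$ to be the maximal such rational subspace containing the directions asymptotic to a given stratum of $X$ at infinity, and set $\mathbb{T}_i = \pi(V_i)$. Compactness of the Grassmannian of rational subspaces combined with o-minimality applied to the semialgebraic family of asymptotic directions of $X$ limits the list of admissible $V_i$ to finitely many.

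For each such $V_i$, I would construct $C_i$ as the collection of ``finite residues'' of the translates $x_k - \lambda_k$ modulo $V_i$, read off inside the complex quotient $\CC^n/W_i$ where $W_i$ is the largest complex subspace contained in $V_i$. O-minimality yields immediately that $C_i$ is at least semialgebraic, after which $\pi$ factors through the complex-analytic quotient $\mathbb{T}/\mathbb{T}_i$. Item (i) then follows because $C_i$ is obtained by passing to a limit along a positive-dimensional asymptotic direction of $X$, which strictly lowers complex dimension. Item (ii) should follow by an induction-and-absorption argument: if $\mathbb{T}_i$ is maximal but $\dim_\CC C_i > 0$, then applying the theorem to $C_i$ itself would produce real subtori strictly containing $\mathbb{T}_i$ in the description of $\pi(C_i) + \mathbb{T}_i$, contradicting maximality.

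The main obstacle is upgrading the $C_i$ from semialgebraic to genuinely complex-algebraic. A direct o-minimal construction yields only definable $C_i$; demonstrating algebraicity requires carefully intertwining the real asymptotic analysis with the complex structure on $\CC^n/W_i$, presumably by combining Ax's theorem on the Zariski closure of $\pi(X)$ with a Chow-type argument on the quotient torus $\mathbb{T}/\mathbb{T}_i$. This is precisely where the complex-analytic hypothesis on $\mathbb{T}$ is essential and where the proof diverges from the parallel real o-minimal statement, for which a purely definable decomposition suffices.
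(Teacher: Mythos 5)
Your opening analysis of unbounded sequences $x_k-\lambda_k$ and their limiting directions is the right physical picture, and your asymptotic-cone objects are indeed a shadow of the paper's asymptotic flats $\CA^\CC(X)$. But there is a concrete error in the second step: you claim that since $v$ is a limit of normalized lattice directions $\lambda_k/|\lambda_k|$, it must lie in a rational ($\Lambda$-defined) subspace. This is false. Because $\Lambda$ has full real rank, normalized lattice directions are dense in the unit sphere, so a single limiting direction $v$ carries no rationality constraint whatsoever, and the ``maximal rational subspace containing the asymptotic directions of a stratum'' is therefore not forced to be anything smaller than $\CC^n$ by this reasoning. The correct mechanism (Proposition~\ref{prop:neat-dense} in the paper) is Baire-categorical and runs over an entire irreducible constructible family $T$ of asymptotic flats at once: since $\Lambda$ is countable and $T$ is irreducible and uncountable, the set of $A\in T$ whose linear part has the maximal $\Lambda$-saturation $V^\Lambda$ is dense in $T$. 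Without some such argument over a whole family, you also cannot show that the full subtorus $\mathbb{T}_i$ (and not just isolated points) sits in the closure; in the paper that step is what the $\mu$-stabilizer apparatus and Fact~\ref{fact:kron} accomplish.

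The other substantial gap, which you yourself flag, is the algebraicity of the $C_i$, and the fix you suggest is not the one the paper uses. Ax's theorem controls the Zariski closure of $\pi(X)$, not the topological frontier, and a Chow-type argument presupposes that $\pi(C_i)$ is already an analytic set, which is circular. The paper instead proves that the family $\CA^\CC(X)$ is itself a constructible family defined over $\CC$; this comes from Robinson quantifier elimination for ACVF together with stable embeddedness of the residue field $\CC$ inside the valued field $\FC$ (Proposition~\ref{prop:ACVF}), and the $C_i$ are then obtained by the constructible operation of intersecting each flat in an irreducible component $T_i$ with $V_i^\perp$. Finally, your absorption argument for Clause (ii) does not work as stated: $C_i$ having positive dimension does not by itself force $\pi(C_i)$ to contribute new subtori strictly containing $\mathbb{T}_i$. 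The paper's proof of Clause (ii) is a compactness/saturation argument: assuming $C_i$ is unbounded, one produces an $\alpha^*\in X^\sharp$ whose asymptotic flat has linear part strictly larger than $V_i$, contradicting maximality.
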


  Notice that in general the sets
$\pi(X)$ and $\pi(C_i)$ need neither be closed nor definable in any o-minimal
structure. Note also that when $\dim_\CC X{=}1$ then $\dim_\CC C_i {=}0$
hence  Theorem~\ref{intro:thm1} implies
the result of Ullmo and Yafaev mentioned above.

In fact, as we show, the choice of $C_i$ depends only on $X$ (and not on
$\mathbb T$) and furthermore, each subtorus $\mathbb{T}_i\sub \mathbb T$ in the
above description is of the form $\cl(\pi(V_i))$ with $V_i\sub \CC^n$  a complex
linear subspace which also depends only on $X$.
\\

In order to prove the above result, we find it more convenient to
work in $\CC^n$ rather than in $\mathbb T$.  Let $\Lambda=\ker
\pi$ be the corresponding lattice in $\CC^n$ and let
$\cl(X+\Lambda)$ denote the topological closure in $\CC^n$. It is
easy to see that $\cl(\pi(X))=\pi(\cl(X+\Lambda))$ hence the above
theorem can be deduced from our analysis of $\cl(X+\Lambda)$ in
$\CC^n$, which we now describe.

For $V$  a complex or real linear subspace  of $\CC^n$ and $\Lambda$ a lattice
 in $\CC^n$ we denote by $V^\Lambda$ the smallest $\RR$-linear subspace of
$\CC^n$ containing $\Lambda$ with a basis in $\Lambda$ (equivalently, this is the
connected component of $0$ in the real Lie group $\cl(V+\Lambda)$).

The following is the main result of the first part of this article.
\\

\noindent{\bf Main Theorem (algebraic case)}[see
Theorem~\ref{thm:main0}]. \emph{Let $X\subseteq \CC^n$ be  an
algebraic subvariety. There are linear $\CC$-subspaces
$V_1,\dotsc,V_m \subseteq
 \CC^n$ of positive dimension  and algebraic subvarieties $C_1, \dotsc,C_m \subseteq \CC^n$
 such that for any lattice
 $\Lambda<\CC^n$ we have
\[  \cl(X+\Lambda) =(x+\Lambda)
    \cup\bigcup_{i=1}^m (C_i+V_i^\Lambda+\Lambda).
\]
In addition,
\begin{enumerate}[(i)]
\item For each $i=1,\dotsc, m$, we have $\dim_\CC C_i<\dim_\CC X$.
\item For each $V_i$ that is maximal among $V_1,\dotsc, V_m$, the
set $C_i\sub \CC^n$ is  finite.
\end{enumerate}
}

\medskip

Notice that Theorem~\ref{intro:thm1} is an immediate corollary of the above.

\medskip

 In the second part of the article we obtain a similar result in the
o-minimal setting, and disprove the analogous o-minimal conjecture from \cite{flow}.
In order to formulate the theorem, we fix an o-minimal expansion $\RR_{\om}$ of the
real field.

\begin{thm}\label{thm2} Let $\pi:\RR^n\to \mathbb T$ be the covering map of a compact real torus and
let $X\sub \RR^n$ be a closed set definable in $\RR_{\om}$. There
are finitely many definable closed sets $C_1,\ldots, C_m\sub
\RR^n$ and finitely many real subtori $\mathbb{T}_1,\ldots,
\mathbb{T}_m\sub \mathbb \mathbb{T}$ of positive dimension such
that
\[\cl(\pi(X))=\pi(X)\cup \bigcup_{i=1}^m (\pi(C_i)+\mathbb{T}_i).\]

In addition,
\begin{enumerate}[(i)]
\item  For every $i=1,\ldots, m$, we have $\dim_\RR C_i<\dim_\RR
X$ (where $\dim_\RR$ is the o-minimal dimension). \item  If
$\mathbb{T}_i$ is maximal with respect to inclusion among
$\mathbb{T}_1,\dotsc,\mathbb{T}_m$ then $C_i$ is bounded in
$\CC^n$ and in particular $\pi(C_i)$ is closed.
\end{enumerate}

\end{thm}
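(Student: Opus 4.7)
The plan is to mirror the strategy used for the complex algebraic case. Since $\pi\colon\RR^n\to\mathbb{T}$ is the quotient by the full-rank lattice $\Lambda=\ker\pi$ and is continuous, $\cl(\pi(X))=\pi(\cl(X+\Lambda))$, so it suffices to prove a real, o-minimal analogue of the Main Theorem (algebraic case): there exist real linear subspaces $V_1,\dots,V_m\subseteq\RR^n$ and definable closed sets $C_1,\dots,C_m\subseteq\RR^n$, depending only on $X$, such that for every full-rank lattice $\Lambda<\RR^n$,
\[\cl(X+\Lambda)=(X+\Lambda)\cup\bigcup_{i=1}^m(C_i+V_i^\Lambda+\Lambda),\]
with $\dim_\RR C_i<\dim_\RR X$ and $C_i$ bounded whenever $V_i$ is maximal among $V_1,\dots,V_m$. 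The theorem then follows by pushing this decomposition through $\pi$, with the subtori $\mathbb{T}_i$ being the images of $V_i^\Lambda$.

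I would proceed by induction on $\dim_\RR X$, the zero-dimensional case being immediate. At the inductive step, note that since $X$ is closed and $\Lambda$ discrete, a frontier point $y\in\cl(X+\Lambda)\setminus(X+\Lambda)$ is necessarily the limit of some $x_k+\lambda_k$ with $x_k\in X$, $\lambda_k\in\Lambda$ and $|x_k|\to\infty$. So the key object is the asymptotic geometry of $X$ at infinity. By o-minimal cell decomposition together with definable curve selection and the definable trivialization theorem, for each escape direction $v\in S^{n-1}$ one can extract a finite list of real subspaces $V\subseteq\RR^n$ such that some unbounded definable piece of $X$ is, along $v$, asymptotic to a translate of $V$. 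These finitely many $V$'s, which are intrinsic to $X$, become the candidates $V_i$. The associated $C_i$ are obtained as the definable closed set of ``residual transverse data'' produced when one quotients the corresponding asymptotic piece of $X$ by $V_i$; o-minimal dimension theory gives $\dim C_i<\dim X$. The equality $\cl(X+\Lambda)=(X+\Lambda)\cup\bigcup_i(C_i+V_i^\Lambda+\Lambda)$ then comes from the elementary fact that $\cl(V+\Lambda)=V^\Lambda+\Lambda$ for any real subspace $V$, which converts linear asymptotic information of $X$ into the correct lattice-dependent contribution to the frontier.

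The main obstacle is the absence of the algebraic tools used in the complex case --- projective completion of $X$, Zariski closure, and Ax-type transcendence results controlling the list of limit linear subspaces. I would replace these by tame o-minimal substitutes: a definable spherical blow-up at infinity to turn escape directions into genuine boundary points of a definable compactification; the o-minimal curve selection lemma in place of Nullstellensatz-style extraction of algebraic arcs; and uniform definability of the list of asymptotic linear subspaces in definable families in place of Ax's theorem. The step requiring the most care is showing that the resulting $V_i$ depend only on $X$ and not on $\Lambda$, so that the decomposition is truly uniform in the lattice; I expect this to fall out of a definable-families argument that iterates the extraction of asymptotic directions at finitely many levels. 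Boundedness of $C_i$ when $V_i$ is maximal should then be a consequence of maximality itself via a compactness argument: an unbounded direction in $C_i$ would force the introduction of a strictly larger subspace into the list, contradicting the maximality of $V_i$.
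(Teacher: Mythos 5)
Your reduction to proving that $\cl(X+\Lambda)=(X+\Lambda)\cup\bigcup_i(C_i+V_i^\Lambda+\Lambda)$ for every full-rank lattice $\Lambda$, with the $V_i$ and $C_i$ depending only on $X$, is exactly the paper's strategy, and your instinct that the work lives in the asymptotic geometry of $X$ at infinity is also right. But the two hardest steps of the argument are exactly the ones your outline treats as automatic. First, you claim the equality ``comes from the elementary fact that $\cl(V+\Lambda)=V^\Lambda+\Lambda$''; that fact only controls the closure of a single affine subspace plus $\Lambda$. It does \emph{not} by itself show that an asymptotic flat $A$ of $X$ --- which may well have $\dim A>\dim X$ --- is contained in $\cl(X+\Lambda)$. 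The paper proves this (Proposition~\ref{omin-prop}) by an induction on $\dim A^\RR_\alpha$ that relies essentially on the $\mu$-stabilizer $\Stab^\mu_\om(\alpha/\RR)$ being a positive-dimensional definable subgroup whenever $\alpha$ is unbounded; translating by that subgroup and then saturating with $\Lambda$ is what pushes the dimension down at each step. Your proposal contains no substitute mechanism, and it is not clear that cell decomposition plus curve selection produces one.

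Second, your sketch treats the set of asymptotic linear parts $L(A)$ as if it were finite. In general it is an \emph{infinite} definable family (see the paper's own Example in Section~\ref{sec:examples}, where a one-parameter family of parallel lines occurs). The theorem is salvaged because the family $\CA^\RR(X)$ of all asymptotic flats is itself definable (this uses tame pairs, Theorem~\ref{thm:asR-def}, analogous to stable embeddedness of the residue field in ACVF), can be partitioned into finitely many \emph{neat} families $T_i$ (Theorem~\ref{thm:comb-neat1}), and on each $T_i$ a Baire-category density argument (Proposition~\ref{prop:neat-dense-2}) shows that the closure of $\bigcup_{A\in T_i}(A+\Lambda)$ equals $\bigcup_{A\in T_i}(A+\RRL(T_i)^\Lambda+\Lambda)$, replacing the varying subspaces $L(A)^\Lambda$ with the single $V_i^\Lambda$. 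Without this density step your decomposition does not close up into a finite union of the required form, and the uniformity in $\Lambda$ that you flag as a delicate point is handled precisely here. Your intuition for Clause~(ii) --- that unboundedness of $C_i$ would force a larger $V$ into the list --- is correct in spirit, but the argument in the paper again leans on the definability/neatness of $T_i$ and a dimension count, which your outline does not supply.
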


As in the algebraic case, the above result follows from a theorem on the closure of
$X+\Lambda$ in $\RR^n$, for $\Lambda= \ker \pi$.
\\

\noindent {\bf Main Theorem (o-minimal case)} [see
Theorem~\ref{thm:main1-omin}]. {\it Let $X\subseteq \RR^n$ be a
  closed set definable in an o-minimal expansion $\RR_{\om}$ of the real field.
   There are linear $\RR$-subspaces  $V_1,\dotsc,V_m \sub
 \RR^n$ of positive dimension, and for each $i=1,\dotsc, m$  definable closed
 $C_i\sub \RR^n$,  such that for any lattice
 $\Lambda<\RR^n$ we have
\[  \cl(X+\Lambda) =(X+\Lambda)
    \cup\bigcup_{i=1}^m (C_i+V_i^\Lambda+\Lambda).
\]

In addition,
\begin{enumerate}[(i)]
\item For each $i=1,\ldots, m$, we have $\dim_\RR C_i<\dim_\RR X$.
\item For each $V_i$ that is maximal among $V_1,\ldots, V_m$, the
set $C_i\sub \RR^n$ is  bounded, and in particular
$C_i+V_i^{\Lambda}+\Lambda$ is a closed set.
\end{enumerate}
}

\medskip

 The proofs of the  above theorems are carried out in two main steps. In the
 first step we describe the closure of $X+\Lambda$ in $\CC^n$ and $\RR^n$, as a union of closures
 of sets of the form $\Lambda+A$, where $A$ is an affine subspace
 of $\CC^n$ or $\RR^n$. We call these affine spaces
 ``affine asymptotes'' to $X$ (see Section~\ref{sec:affine-assymptotes}). The analysis of the closure in terms of
affine asymptotes uses the model theoretic notion of types. We
also apply at this step the theory of stabilizers of $\mu$-types
as was developed in \cite{mustab} (see
Section~\ref{sec:mu-stabilizers-types} below). Furthermore, using
model theory of valued fields
 and of o-minimal structures we show that the family of affine
 asymptotes to $X$ is itself constructible (in the algebraic case)
 and definable (in the o-minimal case). We introduce these model theoretic preliminaries in Section~\ref{sec:universal-domain} and
Section~\ref{sec:valu-field-struct}.

In the second step we use Baire Category Theorem to replace the infinitely many
affine spaces by finitely many (each possibly infinite) families of translates of
fixed linear spaces thus yielding Theorem~\ref{thm:main0} and
Theorem~\ref{thm:main1-omin}.
\\

We note that in the same articles, Ullmo and Yafaev  formulate two
measure theoretic conjectures about $\pi(X)$, in the algebraic and
o-minimal settings, and we do not touch on them here.
\\

Finally, although the article is not formulated in that language,
our approach is influenced   by  van den Dries work on various
notions of limits of definable families and their connection to
model theory (see \cite{lou-limit}).

\section{Preliminaries}
\label{sec:universal-domain}

\subsection{Model theoretic preliminaries}
We first introduce the model theoretic settings in which we will be working.
We refer to \cite{marker}  for basics on model theory and to
\cite{omin} and \cite{CM}  for basics on o-minimality.

\medskip

We denote by $\CL_\al$ the ``algebraic'' language, i.e.\ the language of
rings $\CL_\al=\la +,-,\cdot,0,1\ra$,  and view the field
$\CC$ as an $\CL_\al$-structure.

Working with the field  $\RR$ and semialgebraic sets we use
the ``semialgebraic'' language  $\CL_\sa=\la +,-,\cdot,<,0,1\ra$.

For the algebraic case we also need a language for valued field.
We use the language $\CL_\val=\la +,-,\cdot, \CO,0,1\ra$, where $\CO$
is a unary predicate  with an intended use for the valuation
ring. Notice that $\CL_\al \subseteq \CL_\val$.

For the o-minimal case
 we  fix  an o-minimal expansion $\RR_\om$ of
the field  $\RR$ and denote its language by $\CL_\om$. Notice that $\CL_\al\sub
\CL_\sa\subseteq \CL_\om$.

We also  work in expansions  of  $\RR$ and $\RR_\om$
by various additive subgroups $\Lambda\leq \RR^n$. To avoid
different expansions it is
 convenient to treat them all at once. Thus we consider the expansion of $\RR$
by predicates for {\bf all} subsets of $\RR^n$, for all $n$. The language for this structure is
denoted by $\CL_\full$.
We let $\RR_\full$ be the associated $\CL_\full$-structure on
$\RR$.

We  choose  a cardinal $\kappa > 2^\omega$ and fix a $\kappa$-saturated elementary
extension $\FR_\full$ of $\RR_\mathrm{full}$.  We denote by $\FR$ the
underlying real closed field and by $\FR_\om$ the o-minimal reduct $\FR_\full
{\restriction} \CL_\om$. Notice that since both the real closed field $\FR$ and the
o-minimal structure $\FR_\om$ are reducts of $\FR_\full$ they are both
$\kappa$-saturated.

To distinguish between subsets of $\FR$ and $\RR$ we use  the following
convention: we let roman lettes $X,Y,Z$ etc.\  denote subsets of $\RR^n$ and script
letters $\mathcal{X,Y,Z}$ etc.\  subsets of $\FR^n$.

Also if $X\subseteq \RR^n$,  then we can view $X$ as an $\CL_\full$-definable set
and denote by $X^\sharp$  the set $X(\FR)$ of realizations of $X$ in
$\FR_\full$.

\medskip

Our model theoretic terminology is standard.
By definable we mean definable with parameters.
 We use  ``$\CL_\sa$-definable'' and  ``semialgebraic'' interchangeably.

If $\CL_\bullet$ is one of our languages and $x$ a finite tuple of
variables  then an   $\CL_\bullet$-type $p(x)$ (over
a set $A\subseteq \FR$) is a collection of $\CL_\bullet$-formulas (over $A$)  with free variables
$x$. We  identify an $\CL_\bullet$-type $p(x)$ with the collection of subsets of
$\FR^{|x|}$ defined by formulas in $p(x)$. We do not assume that a type is complete,
but always assume it is consistent.   Thus  an  $\CL_\bullet$-type $p(x)$ over $A$
is a collection of subsets of $\FR^{|x|}$ such that  each subset is
$\CL_\bullet$-definable over $A$ and $p(x)$ satisfies the finite intersection
property (namely the intersection of a every finite subcollection of $p$ is nonempty).
Given a type $p(x)$ we denote by $p(\FR)$ the set $\bigcap_{\CX\in p} \CX$.   Two
$\CL_\bullet$-types $p(x)$ and $q(x)$ are called equivalent if for every finite
$p_0(x)\subseteq p(x)$ there is finite $q_0(x)\subseteq q(x)$ with $q_0(\FR)
\subseteq p_0(\FR)$, and vise versa. It follows that $p(\FR)=q(\FR)$.

\begin{defn}
A subset $\CX\subseteq \FR^n$ is called \emph{pro-semialgebraic} (over $A\sub \FR$)
if there is $A_0\subseteq \FR$ ($A_0\subseteq A$) with $|A_0|< \kappa$ and a
semialgebraic type $p(x)$ over $A_0$ such that  $\CX = p(\FR)$.

\end{defn}
 Notice that if $\CX$ and $p(x)$ are as in the above definition and  $\CX=q(\FR)$ 
for  another semialgebraic type $q(x)$ over  $A_0$
 then  by $\kappa$-saturation of $\FR$ the types $p(x)$ and
$q(x)$ are equivalent.

\subsection{Basics on additive subgroups}
\label{sec:basics-subgroups-rrn}

Let $W$ be a finite dimensional $\RR$-vector space and $\Lambda$ be an
additive subgroup of
$W$ whose $\RR$-span is the whole $W$.
We do not assume that $\Lambda$ is a lattice or even finitely
generated.

We say that an $\RR$-subspace $V\subseteq W$ is \emph{defined over $\Lambda$} if it
has a basis consisting of elements of $\Lambda$.

It is not hard to see that the family of subspaces defined over $\Lambda$ is closed
under arbitrary intersections and finite sums.

For a subspace $H\subseteq W$  we  denote by $H^\Lambda$ the smallest
$\RR$-subspace of $W$ defined over $\Lambda$ containing $H$.

We will need the following well-known fact.

\begin{fact}\label{fact:kron}
Let $W$ be a finite dimensional $\RR$-vector space and $\Lambda\leq W$ an additive
subgroup whose $\RR$-span is the whole $W$. If  $H\subseteq W$ is an  $\RR$-subspace
then $H^\Lambda+\Lambda\subseteq\cl(H+\Lambda)$ (with equality when $\Lambda$
is a lattice in $W$).
\end{fact}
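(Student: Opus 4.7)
The plan is to identify $\cl(H+\Lambda)$ as a closed subgroup of $W$ and analyze its identity component. Set $G=\cl(H+\Lambda)$; since $H+\Lambda$ is an additive subgroup of $W$, so is $G$, and $G$ is closed by construction. By the classical structure theorem for closed subgroups of a finite-dimensional real vector space, the identity component $G^0$ is an $\RR$-linear subspace of $W$ and $G/G^0$ is discrete. Because $H$ is connected and contains $0$, it lies in $G^0$. The whole argument then reduces to showing that $G^0$ is \emph{defined over $\Lambda$}: once this is known, the minimality of $H^\Lambda$ gives $H^\Lambda\subseteq G^0$, and hence $H^\Lambda+\Lambda\subseteq G^0+\Lambda\subseteq G=\cl(H+\Lambda)$.

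The main — and only genuinely delicate — step is showing that $G^0$ has a basis in $\Lambda$, and this is where the hypothesis $\RR\cdot\Lambda=W$ is used. Let $\pi\colon W\to W/G^0$ be the quotient map. Since $H\subseteq G^0$, the density of $H+\Lambda$ in $G$ forces $\pi(\Lambda)$ to be dense in the discrete group $G/G^0$, so $\pi(\Lambda)=G/G^0$. The $\RR$-span of $\pi(\Lambda)$ equals $\pi(W)=W/G^0$, so $\pi(\Lambda)$ is a discrete subgroup of $W/G^0$ whose $\RR$-span is $W/G^0$, i.e.\ a lattice in $W/G^0$. Choose $e_1,\dotsc,e_m\in\Lambda$ whose images form a $\ZZ$-basis of $\pi(\Lambda)$, where $m=\dim(W/G^0)$; every $\lambda\in\Lambda$ then differs from a $\ZZ$-combination of the $e_i$ by an element of $\Lambda\cap G^0$, yielding
\[
\Lambda=\ZZ e_1+\dotsb+\ZZ e_m+(\Lambda\cap G^0),\qquad W=\RR e_1+\dotsb+\RR e_m+\mathrm{span}_\RR(\Lambda\cap G^0).
\]
Since $\dim W=m+\dim G^0$ and $\mathrm{span}_\RR(\Lambda\cap G^0)\subseteq G^0$, the second equality forces $\mathrm{span}_\RR(\Lambda\cap G^0)=G^0$, giving the required basis of $G^0$ inside $\Lambda$.

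For the equality $\cl(H+\Lambda)=H^\Lambda+\Lambda$ when $\Lambda$ is a lattice, it suffices to show $H^\Lambda+\Lambda$ is itself closed. Take a basis $f_1,\dotsc,f_k\in\Lambda$ of $H^\Lambda$: then $\ZZ f_1+\dotsb+\ZZ f_k$ is a full-rank sublattice of the discrete group $\Lambda\cap H^\Lambda$, so $\Lambda\cap H^\Lambda$ is itself a lattice of full rank in $H^\Lambda$. Hence $H^\Lambda/(\Lambda\cap H^\Lambda)$ is a compact torus, its image in $W/\Lambda$ is compact and hence closed, and pulling back through $W\to W/\Lambda$ shows $H^\Lambda+\Lambda$ is closed in $W$. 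The dimension count in the middle paragraph is where I expect the argument to feel the most load; everything else is bookkeeping once one has the structure theorem for closed subgroups of Euclidean space in hand.
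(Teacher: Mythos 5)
The paper records this as a well-known fact and gives no proof of its own, so there is nothing to compare against; judged on its own, your proof is correct and complete. The route you take -- reduce to the structure theorem for closed subgroups of a Euclidean space, show $G^0=\cl(H+\Lambda)^0$ is defined over $\Lambda$ via the quotient $\pi\colon W\to W/G^0$, then conclude by minimality of $H^\Lambda$ -- is the natural one, and the dimension count pinning down $\mathrm{span}_\RR(\Lambda\cap G^0)=G^0$ is where the hypothesis $\RR\cdot\Lambda=W$ does its work, exactly as you flag. Two small points that are implicit but worth keeping in mind if you write this out in full: (a) to pass from ``$\pi(\Lambda)$ dense in $G/G^0$'' to ``$\pi(\Lambda)=G/G^0$'' you are using that $\pi(\Lambda)$, being a subgroup of the discrete group $G/G^0$, is itself discrete and hence closed in $W/G^0$; and (b) in the lattice case, ``$H^\Lambda+\Lambda$ is closed'' yields the reverse inclusion only because $H+\Lambda\subseteq H^\Lambda+\Lambda$, which is immediate from $H\subseteq H^\Lambda$. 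Both are routine, and your argument stands.
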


\begin{rem} For  a  $\CC$-subspace  $H\subseteq \CC^n$ and an additive
  subgroup $\Lambda\sub \CC^n$ whose
$\RR$-span is the whole $\CC^n$, we still  denote by $H^\Lambda$ the smallest $\RR$-subspace
of $\CC^n$ containing $H$  and having an $\RR$-basis in
  $\Lambda$. Thus  $H^{\Lambda}$ need not be a $\CC$-linear
  subspace of $\CC^n$.
\end{rem}

\section{Valued field structures on $\FR$}
\label{sec:valu-field-struct}

We denote by $\CO_\FR$ the convex hull of $\RR$ in $\FR$. It is a valuation ring of
$\FR$, and  we let $\mu_\FR$ be its maximal ideal. The set $\mu_\FR$ is the
intersection of all open intervals $(-1/n,1/n)$ for $n\in \NN^{>0}$, hence it is
pro-semialgebraic over $\varnothing$.

\medskip

As an additive group $\CO_\FR$  is the direct sum
$\CO_\FR=\RR\oplus\mu_\FR$, hence for $\alpha\in \CO_\FR$ there is
unique $r_\alpha\in \RR$ with $\alpha\in r_\alpha+\mu_\FR$.  This
$r_\alpha$ is called \emph{the standard part of $\alpha$} and  we
denote it by  $\st(\alpha)$. Thus we have \emph{the standard part map}
$\st\colon \CO_\FR\to \RR$.  Slightly
abusing notations,  we use $\st(x)$ also to denote the map from
$\CO_\FR^n$ to $\RR^n$ defined by $\st(x_1,\dotsc,x_n)=(\st(x_1),\dotsc,\st(x_n))$, and
for a subset $\CX\subseteq \FR^n$ we write $\st(\mathcal X)$ for the set
 $\st(\mathcal X\cap \CO_\FR^n)$.

\subsection{Closure and the standard part map}
\label{sec:clos-stand-part}

We need the following claim that relates the topological closure and the standard
part map. It follows from the saturation assumption on $\FR_\full$. As usual for
$\CX,\CY \subseteq \FR^n$ we write  $\CX+\CY$ for the set $\{ x+y \colon x\in \CX,
y\in \CY \}$. Recall that for a subset  $X \subseteq \RR^n$ we denote
by $X^\sharp$  the subset of $\FR^n$ defined in the structure
$\FR_\full$ by the predicate  corresponding to $X$. 

\begin{claim}\label{claim:cl-stand-part} \begin{enumerate}
\item For $X \subseteq \RR^n$
  we have  $\cl(X)= \st(X^\sharp)$. In particular, for $X, Y\sub \RR^n$ we have
  $\cl(X+Y)=\st(X^\sharp+Y^\sharp)$.
  \item  Let $\Sigma$ be a collection of subsets of $\RR^n$.
  Then
\[\st\left( \bigcap_{X\in \Sigma} X^\sharp\right)=\bigcap_{X\in \Sigma} \st(X^\sharp).\]
In particular the
set $\st\left( \bigcap_{X\in \Sigma} X^\sharp\right) $ is closed.
\end{enumerate}
\end{claim}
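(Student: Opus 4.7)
The overall approach is saturation combined with elementarity. Since $\FR_\full$ is $\kappa$-saturated with $\kappa>2^\omega$ and elementarily extends $\RR_\full$, topological closeness in $\RR^n$ can be encoded as infinitesimal closeness in $\FR^n$ by realizing appropriate partial $\CL_\full$-types, all with parameters drawn from the fixed tuple $x$.

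For part (1) I would prove each inclusion separately. For $\st(X^\sharp)\sub\cl(X)$: if $\alpha\in X^\sharp\cap\CO_\FR^n$ and $\st(\alpha)=x\notin\cl(X)$, pick a real $\epsilon>0$ with $B(x,\epsilon)\cap X=\emptyset$ in $\RR^n$; by elementarity the formula $P_X(z)\wedge \|z-x\|<\epsilon$ has no realization in $\FR^n$, which contradicts $\alpha\in X^\sharp$ together with $\alpha-x\in\mu_\FR^n$. For $\cl(X)\sub\st(X^\sharp)$: the partial type $p(z):=\{P_X(z)\}\cup\{\|z-x\|<1/m:m\in\NN^{>0}\}$ is finitely satisfiable in $\RR_\full$ by points of $X$ close to $x$, so $\kappa$-saturation yields $\alpha$ realizing $p$, whence $\st(\alpha)=x$. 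The ``in particular'' consequence reduces to $(X+Y)^\sharp=X^\sharp+Y^\sharp$, which follows from elementarity applied to the defining formula $\exists u,v(z=u+v\wedge P_X(u)\wedge P_Y(v))$ of $X+Y$.

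For part (2) the inclusion $\st(\bigcap_X X^\sharp)\sub\bigcap_X\st(X^\sharp)$ is immediate: a single $\alpha\in\bigcap_X X^\sharp\cap\CO_\FR^n$ simultaneously witnesses $\st(\alpha)\in\st(X^\sharp)$ for every $X$. For the reverse, given $x\in\bigcap_X\cl(X)$ (using part (1)), form the partial type $\{P_X(z):X\in\Sigma\}\cup\{\|z-x\|<1/m:m\in\NN^{>0}\}$, again over the finitely many parameters $x$. Finite satisfiability of a subcollection indexed by $X_1,\dotsc,X_k$ and $m$ reduces, by conjoining predicates and applying elementarity, to the existence of a real point of $X_1\cap\cdots\cap X_k$ in $B(x,1/m)$; $\kappa$-saturation then realizes the full type and any realization delivers the required $\alpha$. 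Closedness of $\st(\bigcap_X X^\sharp)$ is automatic once the equality is in hand, as the right-hand side is an intersection of the closed sets $\cl(X)$.

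The main obstacle is the finite satisfiability step in (2): a priori, $x\in\cl(X_i)$ for each $i$ does not force $x\in\cl(X_1\cap\cdots\cap X_k)$, so one must use that the conjunction $\bigwedge_i P_{X_i}$ is itself the single $\CL_\full$-predicate $P_{X_1\cap\cdots\cap X_k}$ and invoke part (1) applied to the intersection set. This reduction is the only subtle point; everything else is routine bookkeeping on partial types and a single application of $\kappa$-saturation.
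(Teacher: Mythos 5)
The paper leaves this claim unproved, asserting only that it ``follows from the saturation assumption,'' so there is no paper argument to compare against. Your part (1) is correct and is the routine saturation argument. Part (2), however, has a genuine gap at exactly the step you flag as ``the only subtle point,'' and your proposed fix does not close it. You correctly observe that finite satisfiability of $\{P_X(z):X\in\Sigma\}\cup\{\|z-x\|<1/m : m\in\NN^{>0}\}$ requires a real point of $X_1\cap\cdots\cap X_k$ in $B(x,1/m)$, and that $x\in\bigcap_i\cl(X_i)$ does not, a priori, give $x\in\cl(X_1\cap\cdots\cap X_k)$. But the remedy you invoke---rewriting $\bigwedge_i P_{X_i}$ as the single predicate $P_{X_1\cap\cdots\cap X_k}$ and applying part (1) to the intersection set---only tells you that $\cl(X_1\cap\cdots\cap X_k)=\st\bigl((X_1\cap\cdots\cap X_k)^\sharp\bigr)$; it gives no information about whether $x$ lies in that set, which is what finite satisfiability actually needs.

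In fact the equality in (2) is false for an arbitrary collection $\Sigma$. Take $n=1$ and $\Sigma=\{(0,1),(-1,0)\}$: by elementarity $(0,1)^\sharp\cap(-1,0)^\sharp=\emptyset$, so the left-hand side is $\emptyset$, while $\bigcap_{X\in\Sigma}\st(X^\sharp)=[0,1]\cap[-1,0]=\{0\}$. The statement needs an extra hypothesis: $\Sigma$ should be directed downward under intersection, i.e.\ for any finite $X_1,\dotsc,X_k\in\Sigma$ there is $Y\in\Sigma$ with $Y\subseteq X_1\cap\cdots\cap X_k$ (closure of $\Sigma$ under finite intersections suffices). With that hypothesis your argument goes through verbatim: $x\in\cl(Y)$ since $Y\in\Sigma$, and that delivers finite satisfiability. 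This hypothesis is met in every application in the paper, since the relevant $\Sigma$'s come from types, which one may take closed under finite intersections; the paper's uses of the claim are therefore sound, but both the claim as literally stated and your proof of it share this gap.
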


\subsection{The algebraic closure $\FC$ of $\FR$ as an ACVF structure.}
\label{sec:acvf-structure-uu}\leavevmode

Let $\FC=\FR+\ri\FR$, where $\ri=\sqrt{-1}$.   It is an algebraically closed field
containing $\CC$.  We identify the underlying set of $\CC$ with $\RR^2$ and
 the underlying set of  $\FC$ with $\FR^2$. We also view $\RR$ and
 $\FR$ as 
subfields of $\CC$ and  $\FC$, respectively, in an obvious way.

 Let $\CO_\FC \subseteq \FC$ be the set $\CO_\FC=\CO_\FR+\ri\CO_\FR$.  It is a valuation ring of $\FC$ with the
 maximal ideal $\mu_\FC =\mu_\FR+\ri\mu_\FR$.  Again we have that
 $\CO_\FC=\CC\oplus\mu_\FC$, and we let
 $\st\colon \CO_\FC\to \CC$ be the standard part map.

 \begin{rem}\label{rem:ACVF}
We can also identify $\CC$ with the residue
 field $k=\CO_\FC/\mu_\FC$ so that  the residue map
 $\res\colon \CO_\FC\to k$ is the same as the standard part map $\st\colon
 \CO_\FC\to \CC$.
 \end{rem}

We denote by $\FC_\val$ the $\CL_\val$-structure $(\FC;  +, -,\cdot,  \CO_\FC, 0,
1)$.

\begin{prop}\label{prop:ACVF}

If $\CX\subseteq \FC^n$ is an $\CL_\val$-definable set
then $\CX\cap \CC^n$ is $\CL_\al$-definable in the field $\CC$, i.e.\ it is
a constructible  subset of $\CC^n$.

If $\CF$ is an $\CL_\val$-definable family of subsets of $\FC^n$
then the family $\{ \mathfrak{F}\cap \CC^n \colon \mathfrak{F}\in
\CF\}$  is constructible.
\end{prop}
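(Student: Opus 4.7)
The plan is to reduce to an atomic situation via Robinson's quantifier elimination for algebraically closed valued fields. In the language $\CL_\val$, every definable subset of $\FC^n$ is a Boolean combination of sets of the form $\{x : P(x)=0\}$ or $\{x : P(x) \in \CO_\FC\}$ for polynomials $P \in \FC[x_1,\ldots,x_n]$. Since intersection with $\CC^n$ commutes with Boolean operations, it suffices to handle these two atomic traces.

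The key step is a short linear algebra observation. Write $P(x) = \sum_\alpha a_\alpha x^\alpha$ with $a_\alpha \in \FC$, and let $V \subseteq \FC$ be the finite-dimensional $\CC$-subspace spanned by the $a_\alpha$'s; since $\CO_\FC$ is a ring containing $\CC$, the subset $W := V \cap \CO_\FC$ is a $\CC$-subspace of $V$. Choose a $\CC$-basis $b_1,\ldots,b_k$ of $W$ and extend to a basis $b_1,\ldots,b_m$ of $V$, and expand each coefficient $a_\alpha = \sum_j c_{\alpha,j}\, b_j$ with $c_{\alpha,j} \in \CC$. For $x \in \CC^n$ this yields
\[
P(x) = \sum_{j=1}^{m} Q_j(x)\, b_j, \quad \text{where } Q_j(x) := \sum_\alpha c_{\alpha,j}\, x^\alpha \in \CC[x].
\]
By $\CC$-linear independence of the $b_j$'s, $P(x)=0$ iff $Q_1(x)=\cdots=Q_m(x)=0$; and $P(x) \in \CO_\FC$ iff $P(x) \in W$, which by the direct sum decomposition is equivalent to $Q_{k+1}(x)=\cdots=Q_m(x)=0$ (a nonzero $\CC$-combination of $b_{k+1},\ldots,b_m$ lying in $\CO_\FC$ would lie in $W$, contradicting the basis choice). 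Both traces are therefore algebraic, and Boolean combinations yield that $\CX \cap \CC^n$ is constructible.

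For the family version, I would run this argument uniformly. An $\CL_\val$-definable family of subsets of $\FC^n$ is defined, after QE, by a quantifier-free formula whose atomic ingredients are $P_i(x,y)=0$ and $P_i(x,y) \in \CO_\FC$. For each parameter $a$, the $\CC$-subspaces $V_a$ and $W_a$ associated to the polynomials $P_i(x,a)$ take only finitely many possible values of $(\dim V_a, \dim W_a)$, and this stratification of the parameter space is itself $\CL_\val$-definable. On each stratum one can choose $\CC$-bases definably and read off the coefficients $c_{\alpha,j}^a \in \CC$ as fresh $\CC$-valued parameters; the resulting $Q_{i,j}^a$ then describe the trace and assemble into a constructible family of subsets of $\CC^n$. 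The main obstacle is precisely this uniformization step: the choice of $\CC$-basis of $V_a \subseteq \FC$ is not $\CL_\al$-definable over $\CC$. It is handled by passing through the stratification above and invoking $\kappa$-saturation together with Noetherianity of $\CC[x_1,\ldots,x_n]$ to ensure that only finitely many Boolean-combination shapes arise across the family.
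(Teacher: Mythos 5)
Your strategy---reduce via quantifier elimination to atomic traces and then do linear algebra over the $\CC$-span of the coefficients---is genuinely different from the paper's, which cites stable embeddedness of the residue field (\cite[Lemma 6.3]{HK}) for the single-set case and then appeals to completeness of the theory plus compactness for the family version. An elementary argument in this spirit would be worth having, but as written there are two real gaps.

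First, the quantifier elimination you invoke is mis-stated. Robinson's QE for ACVF holds in a language with a \emph{binary} divisibility predicate; the atomic formulas are $P(x)=0$ and $P(x)\in Q(x)\,\CO_\FC$ for \emph{pairs} of polynomials $P,Q$, not the unary atoms $P(x)\in\CO_\FC$ alone. (Compare the paper's own proof of Theorem~\ref{thm:acvf-semi}, which explicitly writes the atoms in the two-polynomial form $h(z)\in q(z)\CO_\FC$.) The unary atoms are strictly weaker: a set $\{x:P(x)\in\CO_\FC\}$ constrains non-negative integer combinations of the $v(x_i)$, so relations like $v(x)\le v(y)$ are not quantifier-free in your fragment. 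Consequently you must handle the trace of $\{x\in\CC^n: P(x)\in Q(x)\CO_\FC\}$, and there your linear-algebra step needs strengthening: on the finite-dimensional $\CC$-space $V$ spanned by the coefficients of $P$ and $Q$, the valuation takes only finitely many values on $V\setminus\{0\}$ (distinct values give $\CC$-independent vectors since $v$ vanishes on $\CC^*$), producing a filtration $V=V_1\supsetneq V_2\supsetneq\dotsb\supsetneq V_r\supsetneq 0$ by $\CC$-subspaces, and $v(P(x))\ge v(Q(x))$ is then a constructible condition read off from this filtration. Your argument with $W=V\cap\CO_\FC$ sees only the top step of this filtration.

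Second, and more seriously, the family statement is where the real content lies, and your sketch does not close it. You correctly identify the obstruction---the choice of a $\CC$-basis of $V_a$ is not $\CL_\al$-definable over $\CC$---but the proposed resolution (``stratification plus $\kappa$-saturation plus Noetherianity of $\CC[x]$ to ensure only finitely many Boolean-combination shapes arise'') is a gesture, not an argument: nothing is established about uniformity of the resulting $\CL_\al$-formulas, and Noetherianity of $\CC[x]$ plays no visible role. The paper's route is the standard one and is worth internalizing: once the single-set case holds, completeness of the theory of algebraically closed valued fields of characteristic zero with an embedded residue field (a consequence of QE) allows a compactness argument over the type space of the parameter to extract finitely many $\CL_\al$-formulas that uniformly define the traces, which is precisely the definition of constructibility for the family.
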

\begin{proof}
 By Remark~\ref{rem:ACVF}, the field $\FC$ together with a predicate for $\CC$ and
  the map $\st(x)$ is an algebraically closed field with an embedded
  residue field. By \cite[Lemma 6.3]{HK} the embedded residue field $\CC$  is
  stably embedded, i.e.\ for every
  $\CL_\val$-definable subset
  $\CX\subseteq \FC^n$  the set  $\CX\cap \CC^n$
is  $\CL_\al$-definable in the field of complex numbers.

The second  part of the proposition is not stated in \cite[Lemma 6.3]{HK}, but it
easily follows: By quantifier elimination from \cite[Lemma 6.3]{HK}, the theory of
algebraically closed valued fields of characteristic zero with an embedded residue
field is complete, and we can use a standard compactness argument.
\end{proof}

\begin{rem}
Notice that the structure $\FC_\val$ is not a reduct of 
$\FR_\full$ (e.g.  $\CO_\FC$ is not definable in $\FR_\full$),  so 
$\FC_\val$ need not be $\kappa$-saturated, and in fact it is not. 
For example the set 
$\{ x\in \CO_\FC \wedge x\notin (c+\mu_\FC) \colon c\in \CC\}$
is an $\CL_\val$-type  over $\CC$  but has no 
realization in $\FC$.

However, as we will see below (see Corollary~\ref{cor:acvf-semi}), $\CL_\val$-types over $\CC$ that are realized in
$\FC$ can be viewed as pro-semialgebraic objects, and working with them we  will
make use of the saturation of the field $\FR$.
\end{rem}

 Using  the
identification of $\FC^n$ with $\FR^{2n}$, we say that a subset $\CX \subseteq
\FC^n$ is semialgebraic (over $A\subseteq \FR$)  if it is semialgebraic (over $A$)
as a subset of $\FR^{2n}$. Similarly we say that a set $\CX\subseteq \FC^n$ is
pro-semialgebraic if it is pro-semialgebraic as a subset of $\FR^{2n}$.

For example, every constructible subset of $\FC^n$ is also
semialgebraic, and for every $n\in \NN$ the set $\mu_\FC^n$ is
pro-semialgebraic over $\emptyset$.  However the set $\CO_\FC$ is not
pro-semialgebraic.

Given an element $\alpha\in \FC^n$ we will consider its $\CL_\val$-type over $\CC$
and also its semialgebraic type over $\RR$.
 To distinguish these types we denote by
$\tp_\val(\alpha/\CC)$ the complete $\CL_\val$-type of $\alpha$ over
$\CC$, i.e.
\[ \tp_\val(\alpha/\CC)=\{ \CX\subseteq \FC^n  \colon \alpha\in\CX, \  \CX\text{ is
    $\CL_\val$-definable over $\CC$ } \}, \]
and by $\tp_\sa(\alpha/\RR)$  the  semialgebraic  type of $\alpha$ over $\RR$,
i.e.
\[ \tp_\sa(\alpha/\RR)=\{ \CX  \subseteq \FC^n \colon
  \alpha\in\CX, \  \CX\text{ is
    semialgebraic over $\RR$ } \}. \]
Notice that $\tp_\sa(\alpha/\RR)$ can be also written as
\[ \{ X^\sharp \subseteq \FC^n
  \colon \alpha\in X^\sharp,  \  X \subseteq \CC^n \text{ is
    semialgebraic } \}. \]

The following theorem plays an essential role in this paper.

\begin{thm}\label{thm:acvf-semi}
Let $\alpha\in \FC$ and $p(x)=\tp_\val(\alpha/\CC)$.
There is an $\CL_\val$-type $s(x)$ over $\CC$ that is equivalent to  $p(x)$
and such that every $\CX\in s(x)$ is pro-semialgebraic over $\RR$.
\end{thm}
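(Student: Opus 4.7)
The plan is to classify the complete $\CL_\val$-type of a single element of $\FC$ over $\CC$ and to exhibit a single pro-semialgebraic $\CL_\val$-definable generator in each case. Using the decomposition $\CO_\FC=\CC\oplus\mu_\FC$, I would split into three cases and assign a candidate $\CL_\val$-definable-over-$\CC$ set $G$ to each: (a) when $\alpha\in\CC$, take $G=\{\alpha\}$; (b) when $\alpha\in\CO_\FC\setminus\CC$, let $c\coloneqq\st(\alpha)\in\CC$ and take $G=(c+\mu_\FC)\setminus\{c\}$; (c) when $\alpha\notin\CO_\FC$, take $G=\FC\setminus\CO_\FC$. In all three cases $\alpha\in G$, so $G\in p$.

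The central step is to show that every $\beta\in G$ has the same $\CL_\val$-type over $\CC$ as $\alpha$. I would use the standard partial isomorphism argument for ACVF: since $\CC$ is algebraically closed, in cases (b) and (c) both $\alpha$ and $\beta$ are transcendental over $\CC$, so the assignment $\alpha\mapsto\beta$ extends to a field isomorphism $\sigma\colon\CC(\alpha)\to\CC(\beta)$ fixing $\CC$. Direct computation shows $\sigma$ preserves the valuation ring: in case (b), writing $f\in\CC[x]$ as $(x-c)^k h(x)$ with $h(c)\neq 0$, one has $h(\gamma)\in h(c)+\mu_\FC$ (a unit in $\CO_\FC$), so $v(f(\gamma))=k\cdot v(\gamma-c)$ for $\gamma\in\{\alpha,\beta\}$, and the condition $f(\gamma)/g(\gamma)\in\CO_\FC$ reduces to $k_f\geq k_g$, independent of $\gamma$; in case (c), a leading-term analysis gives $v(f(\gamma))=(\deg f)\cdot v(\gamma)$ with $v(\gamma)<0$, and again membership in $\CO_\FC$ depends only on $\deg f$ versus $\deg g$. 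Invoking quantifier elimination for ACVF in the expansion of $\CL_\val$ by the valuation divisibility predicate (which is $\CL_\val$-definable, so the expansion defines no new sets) then yields $\tp_\val(\alpha/\CC)=\tp_\val(\beta/\CC)$, and consequently $G\subseteq\CX$ for every $\CX\in p$.

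Now I set $s(x)\coloneqq\{G\}$, a single-formula $\CL_\val$-type over $\CC$. The equivalence of $s$ with $p$ is immediate: $p_0=\{G\}\subseteq p$ witnesses one direction of the finitary equivalence, and the inclusion $G=s(\FC)\subseteq p_0(\FC)$ for any finite $p_0\subseteq p$, established in the preceding paragraph, witnesses the other. It remains to verify that each $G$ is pro-semialgebraic over $\RR$. Identifying $\FC$ with $\FR+\ri\FR$ so that the complex modulus $|\cdot|$ is $\CL_\sa$-definable over $\varnothing$, one has
\[
(c+\mu_\FC)\setminus\{c\}=\bigcap_{n\geq 1}\{z\in\FC:0<|z-c|<1/n\},\qquad \FC\setminus\CO_\FC=\bigcap_{n\geq 1}\{z\in\FC:|z|>n\},
\]
each an intersection of countably many $\CL_\sa$-definable sets with parameters in $\RR$, hence pro-semialgebraic over $\RR$; the set $\{\alpha\}$ of case (a) is outright semialgebraic over the real and imaginary parts of $\alpha$.

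The main obstacle I anticipate is the homogeneity step, because the Remark in the paper warns that $\FC_\val$ is not $\kappa$-saturated, so the partial field isomorphism $\alpha\mapsto\beta$ cannot be extended to a full automorphism of $\FC$ inside that structure. The way around this is to exploit that we work in a single variable over the residue field $\CC$, for which $v(\CC\setminus\{0\})=\{0\}$: this collapses the Boolean algebra of $\CL_\val$-definable subsets of $\FC$ over $\CC$ to the algebra generated by the balls $c+\CO_\FC$, $c+\mu_\FC$ and singletons $\{c\}$ for $c\in\CC$, after which the conclusion $\tp_\val(\alpha/\CC)=\tp_\val(\beta/\CC)$ is a case check on atomic membership and does not require saturation of $\FC_\val$.
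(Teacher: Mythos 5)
Your argument is correct for the one-variable case, but it does not prove the theorem in the generality the paper actually needs. The statement as printed says $\alpha\in\FC$, but this is a typo: the paper's own proof immediately works with $\CL_\val$-definable subsets of $\FC^n$ for arbitrary $n$, and all the results that depend on this theorem --- Corollary~\ref{cor:acvf-semi}, Corollary~\ref{cor:acvf-sat}, Corollary~\ref{cor:acvf-proj}, and the proof of Theorem~\ref{thm:mu-def}(2) --- explicitly invoke it for $\alpha\in\FC^n$. Your entire strategy hinges on classifying the complete $\CL_\val$-types over $\CC$ realized in $\FC$ (in one variable) and exhibiting for each a single generating set $G$. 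As you yourself note in your final paragraph, this works precisely because $v(\CC\setminus\{0\})=\{0\}$ collapses the one-variable definable subsets down to the Boolean algebra generated by balls and singletons centered in $\CC$; there is no analogous finite classification of complete $\CL_\val$-types over $\CC$ in $\FC^n$ for $n\geq 2$, nor is $\tp_\val(\alpha_1,\dotsc,\alpha_n/\CC)$ determined by the individual types $\tp_\val(\alpha_i/\CC)$, so your cases (a)--(c) simply do not enumerate the possibilities in higher arity, and no single pro-semialgebraic generator can be expected to exist.

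The paper sidesteps classification entirely and so works uniformly in $n$: by Robinson's quantifier elimination, every $\CL_\val$-definable-over-$\CC$ subset of $\FC^n$ is a finite \emph{positive} Boolean combination of (1) sets $X^\sharp$ with $X\subseteq\CC^n$ constructible, (2) sets $\{z: h(z)\neq 0,\ q(z)/h(z)\in\mu_\FC\}$, and (3) sets $\{z: h(z)\in q(z)\CO_\FC\}$ with $h,q\in\CC[x]$. Sets of kinds (1) and (2) are already pro-semialgebraic over $\RR$; for a set $\CX$ of kind (3) containing $\alpha$, one observes that either $q(\alpha)=h(\alpha)=0$, or $h(\alpha)/q(\alpha)-c\in\mu_\FC$ for $c=\st(h(\alpha)/q(\alpha))\in\CC$, and either way this yields a set of kind (1) or (2) containing $\alpha$ and contained in $\CX$. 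Taking $s(x)$ to be the sets of kinds (1) and (2) in $p(x)$ gives the desired equivalent pro-semialgebraic type. If you want to salvage your proof, you would need to replace the enumeration of types by this kind of refinement argument applied formula by formula, since describing $p(\FC)$ by a single set is not available in several variables.
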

\begin{proof}
By Robinson's quantifier elimination (see \cite{rob}),  if
$\CX\subseteq \FC^n$ is  $\CL_\val$-definable over $\CC$ then it  is a finite Boolean combination
of sets of the form $X^\sharp$  where $X\subseteq \CC^n$ is a
constructible set, and sets of the form $\{ z\in \FC^n \colon h(z)\in
q(z)\CO_\FC \}$  where $h,q\in \CC[x]$.

Since the complement of a constructible set is constructible, and, for
$h,q\in \CC[x]$,
the complement of the set $\{ z\in \FC^n \colon h(z)\in
q(z)\CO_\FC \}$ is
$\{ z\in \FC^n \colon h(z)\neq 0, q(z)/h(z)\in \mu_\FC
   \}$,
every$\CL_\val$-definable over $\CC$ set  $\CX\subseteq \FC^n$
is a finite \textbf{positive} Boolean combination of sets of
the following three   kinds:
 \begin{enumerate}[1.]
 \item $X^\sharp$, where $X\subseteq \CC^n$ is a constructible
   set.
 \item $\{ z\in \FC^n \colon h(z)\neq 0,\, q(z)/h(z)\in \mu_\FC
   \}$, where $h,q\in \CC[x]$.
 \item $\{ z\in \FC^n \colon  h(z) \in q(z) \CO_\FC
   \}$, where $h,q\in \CC[x]$.
 \end{enumerate}
Notice that sets of all three  kinds are $\CL_\val$-definable over $\CC$. Every set
of the first kind is also a semialgebraic set defined over $\RR$, and every set of
the second kind is pro-semialgebraic over $\RR$ (since $\mu_\FC$ is
pro-semialgebraic).

Let $\CX\in p(x)$ be of the third kind, i.e.
\[ \CX=\{ z\in \FC^n \colon  h(z)\in q(z)\CO_\FC
   \}.\]
Since $\alpha\in \CX$, we have $h(\alpha)\in q(\alpha)\CO_\FC$.
Then either $q(\alpha)=0$ (and hence $h(\alpha)=0$) or, for $c=\st( h(\alpha)/q(\alpha))$, we have
   $h(\alpha)/q(\alpha) -c \in \mu_\FC$.

In either case  we  get a set $\CY$ of the first or second kind with
$\alpha\in \CY$ and $\CY \subseteq \CX$.

Thus if we take $s(x)$ to be the set of all $\CX\in p$ of first and
second kinds, then $s(x)$ is equivalent to $p(x)$ and consists of
sets that are pro-semialgebraic over $\RR$.
\end{proof}

\begin{cor}\label{cor:acvf-semi}
Let $\alpha\in \FC^n$ and $p(x)=\CL_\val(\alpha/\CC)$. There is an $\CL_\sa$-type
$r_\alpha(x)$ over $\RR$  such that
\begin{enumerate}
\item  $r_\alpha(\FC)=p(\FC)$.
\item For every finite $r'(x)\subseteq r_\alpha(x)$ there is $\CX\in
  p(x)$ with $\CX \subseteq r'(x)$.
\end{enumerate}

\end{cor}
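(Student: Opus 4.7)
The plan is to construct $r_\alpha$ directly from the type $s(x)$ produced by Theorem~\ref{thm:acvf-semi}: since every member of $s$ is pro-semialgebraic, I will unfold each into its defining family of semialgebraic sets and take the union. The real content lies entirely in Theorem~\ref{thm:acvf-semi}; what remains is bookkeeping, and I do not foresee a serious obstacle beyond being careful with the precise form of the equivalent type.

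Concretely, I would first apply Theorem~\ref{thm:acvf-semi} to obtain an $\CL_\val$-type $s(x)\sub p(x)$ over $\CC$ (inspecting that proof, the chosen $s$ is explicitly a sub-collection of $p$) which is equivalent to $p(x)$ and each of whose members is pro-semialgebraic over $\RR$. By the definition of pro-semialgebraic set, for each $\CX\in s$ I may fix a small parameter set $A_\CX\sub\RR$ and a semialgebraic type $q_\CX(x)$ over $A_\CX$ with
\[
\CX \;=\; q_\CX(\FR) \;=\; \bigcap_{Y\in q_\CX} Y^\sharp,
\]
each $Y\sub\CC^n$ being semialgebraic. I then set $r_\alpha(x):=\bigcup_{\CX\in s} q_\CX(x)$; consistency is automatic since $\alpha$ lies in every $\CX\in s$ and hence in every $Y^\sharp$ appearing in any $q_\CX$. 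Property~(1) follows by interchanging the order of intersection,
\[
r_\alpha(\FC)\;=\;\bigcap_{\CX\in s}\,\bigcap_{Y\in q_\CX} Y^\sharp\;=\;\bigcap_{\CX\in s}\CX\;=\;s(\FC)\;=\;p(\FC),
\]
the last equality using the equivalence of $s$ and $p$.

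For property~(2), a finite $r'\sub r_\alpha$ draws its formulas from finitely many $q_{\CX_1},\dotsc,q_{\CX_k}$ with each $\CX_i\in s$. Since $\CX_i\sub Y^\sharp$ whenever $Y^\sharp\in q_{\CX_i}$, we obtain $\CX_1\cap\dotsb\cap\CX_k\sub r'(\FC)$. Because $p$ is a complete $\CL_\val$-type it is closed under conjunction, so $\CX_1\cap\dotsb\cap\CX_k$ is itself the realization set of a formula in $p$, and we take this as the required $\CX$. The only mild point needing attention is the containment $s\sub p$ (as opposed to mere equivalence), so that $\CX_i\in p$ is automatic; this is the reason one uses the specific $s$ constructed inside the proof of Theorem~\ref{thm:acvf-semi} rather than an arbitrary equivalent type.
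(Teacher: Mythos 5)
Your proof is correct and supplies exactly the bookkeeping that the paper leaves implicit (the corollary is stated without proof as an immediate consequence of Theorem~\ref{thm:acvf-semi}): unfold each pro-semialgebraic $\CX\in s$ into its defining semialgebraic type over $\RR$, take the union to get $r_\alpha$, and use that $s\subseteq p$ together with closure of $p$ under finite intersection to witness property~(2). The one point worth noting is that the explicit containment $s\subseteq p$ is a convenience, not a necessity: even with mere equivalence one could, for a finite $\{\CX_1,\dotsc,\CX_k\}\subseteq s$, extract a finite $p_0\subseteq p$ with $p_0(\FC)\subseteq \CX_1\cap\dotsb\cap\CX_k$ and take $\CX=\bigcap p_0$.
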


\begin{rem}\label{rem:val-sm}
Notice that unless $\alpha\in \CC^n$ the semialgebraic
  type  $r_\alpha(x)$ is different from the semialgebraic type
$p_\sa(x)=\tp_\sa(\alpha/\RR)$ and we only have strict inclusion  $p_\sa(\FC)\subset
  r_\alpha(\FC)=p(\FC)$.
 \end{rem}

Using the  $\kappa$-saturation of the field
$\FR$ and Corollary~\ref{cor:acvf-semi} we obtain the following.

\begin{cor}\label{cor:acvf-sat} Let $\alpha\in \FC^n$, $p(x)=\tp_\val(\alpha/\CC)$ and
$\mathcal{Z}\subseteq \FC^n$ a pro-semialgebraic set.
If $\mathcal{Z}\cap \mathcal{X}\neq \emptyset$ for every
$\mathcal{X}\in p(x)$ then $\mathcal{Z}\cap p(\FC)\neq \emptyset$.
\end{cor}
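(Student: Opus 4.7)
The plan is to reduce this to a saturation argument on the real closed field $\FR$ by first converting the $\CL_\val$-type $p(x)$ into an equivalent semialgebraic type, using Corollary~\ref{cor:acvf-semi}. That corollary gives a semialgebraic type $r_\alpha(x)$ over $\RR$ with $r_\alpha(\FC)=p(\FC)$, so the goal becomes to find $\beta\in \FC^n$ realizing both $r_\alpha(x)$ and a semialgebraic type defining $\mathcal{Z}$.

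First I would write $\mathcal{Z}=q(\FC)$ for a semialgebraic type $q(x)$ over some $A_0\subseteq \FR$ with $|A_0|<\kappa$. Then I would consider the combined collection $q(x)\cup r_\alpha(x)$, which is a collection of semialgebraic subsets of $\FR^{2n}$ over the parameter set $A_0\cup \RR$; since $\kappa>2^\omega=|\RR|$, the parameter set still has cardinality $<\kappa$. If I can show this combined collection has the finite intersection property, then by $\kappa$-saturation of $\FR$ (which carries over to the semialgebraic reduct) it is realized, and any realization $\beta$ lies in $q(\FC)\cap r_\alpha(\FC)=\mathcal{Z}\cap p(\FC)$.

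To check the finite intersection property, fix finite subfamilies $q_0\subseteq q$ and $r_0'\subseteq r_\alpha$. By clause (2) of Corollary~\ref{cor:acvf-semi}, there is some $\mathcal{X}\in p(x)$ with $\mathcal{X}\subseteq \bigcap r_0'$. The hypothesis of the corollary we are trying to prove then yields a point $\beta\in \mathcal{Z}\cap \mathcal{X}$. Since $\beta\in \mathcal{Z}=q(\FC)\subseteq \bigcap q_0$ and $\beta\in \mathcal{X}\subseteq \bigcap r_0'$, the intersection $\bigcap q_0 \cap \bigcap r_0'$ is nonempty, as required.

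I expect no serious obstacle here; the main conceptual content has been packaged into Corollary~\ref{cor:acvf-semi}, and the remainder is a standard compactness/saturation step. The only subtle point to watch is that $\FC_\val$ itself is \emph{not} $\kappa$-saturated (as the remark following Proposition~\ref{prop:ACVF} warns), so one must carefully push everything through the semialgebraic reduct of $\FR_\full$ and invoke $\kappa$-saturation there, which is precisely the purpose of having the equivalent semialgebraic type $r_\alpha(x)$ available.
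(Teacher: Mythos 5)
Your proof is correct and is precisely the argument the paper intends: the paper states the corollary follows from $\kappa$-saturation of $\FR$ together with Corollary~\ref{cor:acvf-semi}, and you have correctly unpacked that hint by combining the semialgebraic type $r_\alpha$ with a semialgebraic type defining $\mathcal{Z}$, verifying the finite intersection property via clause (2) of Corollary~\ref{cor:acvf-semi} and the hypothesis, and then invoking saturation of the semialgebraic reduct. Your observation about why one must route the argument through $\FR$ rather than $\FC_\val$ is exactly the right point of care.
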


\begin{cor}\label{cor:acvf-proj} Let $h(x)$ be a polynomial over
  $\CC$,  $\alpha\in \FC^n$, $\alpha_1=h(\alpha)$, $p(x)=\tp_\val(\alpha/\CC)$ and
 $p_1(x)=\tp_\val(\alpha_1/\CC)$.  Then $h(x)$ maps $p(\FC)$  onto
$p_1(\FC)$.
\end{cor}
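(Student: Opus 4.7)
The plan is to prove the two containments $h(p(\FC)) \subseteq p_1(\FC)$ and $p_1(\FC) \subseteq h(p(\FC))$ separately.

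For the first inclusion I would exploit that $p = \tp_\val(\alpha/\CC)$ is a \emph{complete} type, so for $\beta \in \FC^n$, membership $\beta \in p(\FC)$ is equivalent to $\tp_\val(\beta/\CC) = p$. Since $h$ has coefficients in $\CC$, applying $h$ preserves $\CL_\val$-types over $\CC$: substituting $h(x)$ into formulas over $\CC$ gives formulas over $\CC$, so $\tp_\val(h(\beta)/\CC) = \tp_\val(h(\alpha)/\CC) = p_1$ and hence $h(\beta) \in p_1(\FC)$.

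For the onto direction I fix $\beta_1 \in p_1(\FC)$ and apply Corollary~\ref{cor:acvf-sat} to the fiber
\[
\mathcal{Z} = \{ x \in \FC^n : h(x) = \beta_1 \}.
\]
This set is semialgebraic (viewed in $\FR^{2n}$) over $\RR$ together with the finitely many real and imaginary coordinates of $\beta_1$, a parameter set of size less than $\kappa$, hence it is pro-semialgebraic. To invoke Corollary~\ref{cor:acvf-sat} I need $\mathcal{Z} \cap \mathcal{X} \neq \emptyset$ for every $\mathcal{X} \in p$. Given such $\mathcal{X}$, the image $h(\mathcal{X})$ is an existentially $\CL_\val$-definable set over $\CC$ containing $\alpha_1 = h(\alpha)$, so $h(\mathcal{X}) \in p_1$; since $\beta_1 \in p_1(\FC)$, it follows that $\beta_1 \in h(\mathcal{X})$, which produces a preimage lying in $\mathcal{X} \cap \mathcal{Z}$. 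Then Corollary~\ref{cor:acvf-sat} yields $\beta \in p(\FC) \cap \mathcal{Z}$, i.e.\ a $\beta \in p(\FC)$ with $h(\beta) = \beta_1$.

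The main point requiring attention, rather than a serious obstacle, is recognizing that although the fiber $\mathcal{Z}$ is defined using $\beta_1$, which is not in $\CC$ and thus not a parameter for $p$, it is still pro-semialgebraic in the sense of the paper because $\beta_1$ contributes only finitely many parameters from $\FR$. Once this is noted, the two directions are short consequences of completeness of the types together with the saturation principle packaged in Corollary~\ref{cor:acvf-sat}.
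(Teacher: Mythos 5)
Your proof is correct, and it differs in structure from the one in the paper. The paper proves the equality in one stroke: it invokes Theorem~\ref{thm:acvf-semi} to replace both $p$ and $p_1$ by equivalent $\CL_\val$-types $s$, $s_1$ consisting of pro-semialgebraic sets, observes that $p_1$ is equivalent to $\{h(p'(\FC)) : p' \subseteq p \text{ finite}\}$ (and similarly for the $s$'s), and then uses $\kappa$-saturation of $\FR$ directly to conclude $h(s(\FC)) = s_1(\FC)$. You instead split the equality into two inclusions: the easy one, $h(p(\FC)) \subseteq p_1(\FC)$, is an immediate consequence of completeness of $\CL_\val$-types and the fact that composing with $h$ preserves definability over $\CC$; the reverse, surjectivity, you obtain by fixing $\beta_1 \in p_1(\FC)$ and applying Corollary~\ref{cor:acvf-sat} to the semialgebraic fiber $\mathcal{Z} = h^{-1}(\beta_1)$, whose pro-semialgebraicity you correctly justify by the observation that $\beta_1$ contributes only finitely many $\FR$-parameters. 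Your verification that $\mathcal{Z} \cap \mathcal{X} \neq \emptyset$ for every $\mathcal{X} \in p$, via $h(\mathcal{X}) \in p_1$ and completeness of $p_1$, is exactly right. The two routes rest on the same underlying machinery (Theorem~\ref{thm:acvf-semi} plus $\kappa$-saturation of $\FR$; indeed Corollary~\ref{cor:acvf-sat} is derived from precisely these), but your decomposition is leaner: it avoids re-proving a type-equivalence for the image and instead leans directly on the saturation statement already packaged in Corollary~\ref{cor:acvf-sat}. The paper's version, in exchange, exhibits the stronger fact that the pro-semialgebraic presentation is preserved under $h$, i.e.\ $h(s(\FC)) = s_1(\FC)$ with both sides pro-semialgebraic, which is slightly more information than the bare surjectivity you derive.
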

\begin{proof}
 Let $s(x)$ be an $\CL_\val$-type over $\CC$ equivalent
  to $p(x)$ consisting of pro-semialgebraic sets, as in Theorem~\ref{thm:acvf-semi}.

First notice that $p_1(x)$ is equivalent to the $\CL_\val$ type
\[\{ h(p'(\FC))\colon p'(x)\subseteq p(x) \text{ is finite}\}.\]

Let $s_1(x)=\{ h(s'(\FC))\colon s'(x)\subseteq s(x) \text{ is
  finite}\}$.
 It is easy to see that $s_1(x)$ consists of pro-semialgebraic
sets and since $s(x)$ and $p(x)$
are equivalent,
$s_1(x)$ and  $p_1(x)$ are  equivalent as well.

As $h(x)$ is a polynomial over $\CC$ it is also a semialgebraic map, and by the
$\kappa$-saturation of $\FR$ we  have $h(s(\FC))= s_1(\FC)$. Since $s(\FC)=p(\FC)$
and $s_1(\FC)=p_1(\FC)$ the result follows.
\end{proof}

\subsection{On $\mu$-stabilizers of types}\label{sec:mu-stabilizers-types}

\subsubsection{The o-minimal case}
\label{sec:o-minimal-case}\leavevmode

We review briefly $\mu$-stabilizers of $\CL_\om$-types
over $\RR$  and refer to  \cite{mustab} for more details.

Since the structure $\FR_\om$ is $\kappa$-saturated the following
definition is equivalent to  \cite[Definition 2.10]{mustab}.

\begin{defn}\label{defn:o-min-stab}
For $\alpha\in \FR^n$ and $p(x)=\tp_\om(\alpha/\RR)$ we define \emph{the 
$\mu$-stabilizer of $p$} as 
\begin{equation*}
 \Stab^\mu_\om(p)=
\{ v\in \RR^n \colon v+(p(\FR) +\mu^n_\FR) = (p(\FR)+\mu^n_\FR) \}, 
\end{equation*}
and we also denote it by $\Stab^\mu_\om(\alpha/\RR)$.  
\end{defn}

The fact below follows from the  main results of \cite{mustab} (see Proposition 2.17
and Theorem 2.10 there).

\begin{fact}\label{fact:mustab}
Let $\alpha\in \FR^n$.
  \begin{enumerate}
  \item $\Stab^\mu_\om(\alpha/\RR)$ is an $\CL_\om$-definable subgroup of $(\RR^n,+)$.
  \item If $\alpha$ is unbounded, i.e. $\alpha\notin \CO_\FR^n$,  then
    $\Stab^\mu_\om(\alpha/\RR)$ is infinite.
  \end{enumerate}
\end{fact}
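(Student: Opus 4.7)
The plan is to reduce both assertions to the results cited from \cite{mustab}, after verifying that the present Definition~\ref{defn:o-min-stab} agrees, thanks to the $\kappa$-saturation of $\FR_\om$, with the notion of $\mu$-stabilizer studied there. In the saturated setting the pro-definable set $p(\FR) + \mu_\FR^n$ coincides with the ``infinitesimal fattening of the locus of $p$'' considered in \cite{mustab}, and the setwise stabilizer defined here is their $\mu$-stabilizer.

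For part (1), the group property is tautological, since the setwise stabilizer of any subset of $\FR^n$ under translation is automatically closed under addition and inversion. For $\CL_\om$-definability I would follow \cite[Theorem 2.10]{mustab}: the stabilizer equals the intersection, over all $\CX \in p$, of the sets of $v \in \RR^n$ satisfying $v + (\CX + \mu_\FR^n) \supseteq p(\FR) + \mu_\FR^n$ together with the symmetric inclusion. Each individual condition is $\CL_\om$-definable because the pro-definable fattening by $\mu_\FR^n$ reduces, via $\kappa$-saturation, to a standard ``for every $\epsilon > 0$'' quantifier. This exhibits $\Stab^\mu_\om(\alpha/\RR)$ as a type-definable subgroup of $(\RR^n, +)$. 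The DCC on definable subgroups in an o-minimal expansion of the real field then forces this descending directed family of definable subgroups to stabilize at a finite stage, yielding genuine $\CL_\om$-definability.

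For part (2), the strategy of \cite[Proposition 2.17]{mustab} is to produce a positive-dimensional subgroup of $\RR^n$ inside $\Stab^\mu_\om(\alpha/\RR)$ whenever $\alpha$ is unbounded. The underlying idea is that an unbounded $\alpha$ carries a distinguished ``direction of escape'' from $\CO_\FR^n$: normalizing by an unbounded coordinate of largest absolute value, one obtains an element of $\CO_\FR^n$ whose standard part cuts out an $\RR$-line $L \subseteq \RR^n$, and the proposition shows that translations along $L$ by standard reals lie in $\Stab^\mu_\om(\alpha/\RR)$. I would invoke this as a black box. The main obstacle, were one to reprove it from scratch, is verifying that such translations genuinely preserve $p(\FR) + \mu_\FR^n$ setwise; this is the technical heart of \cite{mustab}, relying on o-minimal cell decomposition and on the good interaction of the standard part map with definable families to transport formulas in $p$ along the chosen direction.
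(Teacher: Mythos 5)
Your proposal takes the same route as the paper: the paper itself offers no proof of this Fact but simply cites \cite[Proposition~2.17 and Theorem~2.10]{mustab}, and you likewise invoke those results as the substance of the argument, adding some commentary on what the proof in \cite{mustab} looks like. So at the level the paper operates, your approach matches.

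One small imprecision in your elaboration of part (1): the sets you describe, namely
\[
\bigl\{\, v \in \RR^n \ \colon\ v + (\CX + \mu_\FR^n) \supseteq p(\FR) + \mu_\FR^n \ \text{and the symmetric inclusion}\,\bigr\},
\]
for individual $\CX \in p$, are generally \emph{not} subgroups of $(\RR^n,+)$, so you cannot directly appeal to DCC on a descending family of definable subgroups. The fix (this is exactly what the authors do in the analogous algebraic statement, Theorem~\ref{thm:mu-def}) is to range over all translates of $\CX$: set $F_\CX = \{\, u \in \RR^n : \alpha \in u + \CX + \mu_\FR^n \,\}$ and take $G_\CX$ to be the \emph{stabilizer} of $F_\CX$ in $(\RR^n,+)$. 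Each $G_\CX$ is then a genuine definable subgroup, $\Stab^\mu_\om(\alpha/\RR) = \bigcap_\CX G_\CX$, and since the definable subgroups of $(\RR^n,+)$ in an o-minimal structure are $\RR$-linear subspaces, DCC holds and the intersection is a finite one, hence definable. Part (2), cited as a black box, is handled the same way in the paper.
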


\subsubsection{The algebraic case}\leavevmode
\label{sec:algebraic-case}

Similarly to the o-minimal case we now define $\mu$-stabilizers for
$\CL_\val$-types over $\CC$ realized in $\FC$.

\begin{defn}\label{defn:val-stab}
For $\alpha\in \FC^n$ and $p(x)=\tp_\val(\alpha/\CC)$ we define \emph{the 
$\mu$-stabilizer of $p$} as 
\begin{equation*}
 \Stab^\mu_\val(p)=
\{ v\in \CC^n \colon v+(p(\FC) +\mu^n_\FC) = p(\FC)+\mu^n_\FC \}, 
\end{equation*}
and we also denote it by $\Stab^\mu_\val(\alpha/\CC)$.  

We  denote by  $\CP_\alpha^\mu$ the set
$\CP_\alpha^\mu=p(\FC)+\mu_n^\FC$. 
Thus   $\Stab^\mu_\val(\alpha/\CC)=\{ v\in \CC^n \colon
v+\CP_\alpha^\mu=\CP_\alpha^\mu\}$. 

\end{defn}

Since the structure $\FC_\val$ is not $\kappa$-saturated 
we need
some preliminaries before we can 
prove an analogue of   Fact~\ref{fact:mustab}.

\begin{lem}\label{lem:mu-add}  Let $\alpha\in \FC^n$. For $v\in \CC^n$
  we have $v+\CP^\mu_\alpha=\CP^\mu_{v+\alpha}$.
 
\end{lem}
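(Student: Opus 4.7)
The plan is to reduce the lemma to the statement that translation by a point of $\CC^n$ sends the $\CL_\val$-type of $\alpha$ over $\CC$ to that of $v+\alpha$ over $\CC$. Concretely, if $p(x)=\tp_\val(\alpha/\CC)$ and $q(x)=\tp_\val(v+\alpha/\CC)$, I would first show
\[
v+p(\FC)=q(\FC),
\]
and then add $\mu^n_\FC$ to both sides to obtain $v+\CP^\mu_\alpha=\CP^\mu_{v+\alpha}$.

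To prove the displayed equality, I would use that for $v\in\CC^n$ the map $y\mapsto y+v$ and its inverse $y\mapsto y-v$ are both $\CL_\val$-definable over $\CC$, so they induce a bijection between the $\CL_\val$-definable-over-$\CC$ sets containing $\alpha$ and those containing $v+\alpha$. Explicitly, for $\beta\in p(\FC)$ and any $\CX\in q$, the translate $\CX-v$ is $\CL_\val$-definable over $\CC$ and contains $\alpha$, hence lies in $p$; therefore $\beta\in \CX-v$, i.e.\ $v+\beta\in\CX$. Intersecting over $\CX\in q$ gives $v+\beta\in q(\FC)$, so $v+p(\FC)\subseteq q(\FC)$. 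The reverse inclusion follows by the same argument applied to $-v\in\CC^n$ and $v+\alpha$ in place of $\alpha$.

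Finally, since $\CP^\mu_\alpha=p(\FC)+\mu^n_\FC$ by definition,
\[
v+\CP^\mu_\alpha=v+p(\FC)+\mu^n_\FC=q(\FC)+\mu^n_\FC=\CP^\mu_{v+\alpha}.
\]
The argument is essentially a bookkeeping exercise; the only point that needs attention is that nothing here uses saturation of $\FC_\val$ (which was just noted to fail), only the fact that $\alpha$ actually realizes $p$ and that translation by a parameter from $\CC$ is $\CL_\val$-definable over $\CC$. Thus there is no real obstacle; the lemma sets up the equivariance of the construction $\alpha\mapsto\CP^\mu_\alpha$ under $\CC$-translations, which will presumably be used to analyze $\Stab^\mu_\val$ in the subsequent development.
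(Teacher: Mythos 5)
Your proof is correct and follows exactly the paper's route: establish $v+p(\FC)=q(\FC)$ and then add $\mu_\FC^n$ to both sides. The paper simply declares the first equality "easy to see," whereas you spell out the bijection on $\CL_\val$-definable-over-$\CC$ sets induced by translation by $v$; your observation that saturation plays no role here is also accurate.
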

\begin{proof} Let $p(x)=\tp_\val(\alpha/\CC)$,  $\beta=v+\alpha$ and
  $q(x)=\tp(\beta/\CC)$.

  It is easy to see that $v+p(\FC)=q(\FC)$, hence
\[v+\CP_\alpha^\mu= v+p(\FC)+\mu_n^\FC =q(\FC)+\mu_n^\FC=P_\beta^\mu=P_{v+\alpha}^\mu.\]
\end{proof}

\begin{prop}\label{prop:mu-sat} For  $\alpha\in \FC^n$ we have
\[ \CP_\alpha^\mu
=
\cap \{\CX+\mu_\FC^n \colon \CX\subseteq \FC^n  \text{ is
  $\CL_\val$-definable over $\CC$ with } 
\alpha\in \CX+\mu_\FC^n\}. \]
\end{prop}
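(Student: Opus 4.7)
The plan is to prove the two inclusions separately, deriving the nontrivial direction from Corollary~\ref{cor:acvf-sat}, which is the only place $\kappa$-saturation enters (via the passage to semialgebraic types guaranteed by Theorem~\ref{thm:acvf-semi}).

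For the easy inclusion $\CP_\alpha^\mu \subseteq \bigcap\{\CX+\mu_\FC^n\}$, I would fix $\CX \subseteq \FC^n$ that is $\CL_\val$-definable over $\CC$ with $\alpha \in \CX+\mu_\FC^n$, and show that every $\beta+m$ with $\beta\in p(\FC)$, $m\in\mu_\FC^n$ lies in $\CX+\mu_\FC^n$. The key point is that $\mu_\FC$ is itself $\CL_\val$-definable over $\emptyset$ (as the maximal ideal of the definable valuation ring $\CO_\FC$), so $\CX+\mu_\FC^n$ is $\CL_\val$-definable over $\CC$. Thus the formula ``$x\in\CX+\mu_\FC^n$'' belongs to $p(x)$, so $\beta\in\CX+\mu_\FC^n$; writing $\beta=x+m'$ with $x\in\CX$ and $m'\in\mu_\FC^n$ then gives $\beta+m = x+(m'+m) \in \CX+\mu_\FC^n$.

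For the nontrivial inclusion $\supseteq$, I would take $\gamma\in\FC^n$ lying in every such $\CX+\mu_\FC^n$, and set $\mathcal{Z} = \gamma + \mu_\FC^n \subseteq \FC^n$. Since $\mu_\FC^n$ is pro-semialgebraic over $\varnothing$ (as a power of $\mu_\FR$, which is pro-semialgebraic over $\varnothing$), and translation by the finitely many coordinates of $\gamma$ only adds parameters of cardinality $<\kappa$, the set $\mathcal{Z}$ is pro-semialgebraic. For each $\CX\in p(x)$ we have $\alpha\in\CX\subseteq\CX+\mu_\FC^n$, so by hypothesis $\gamma\in\CX+\mu_\FC^n$; using $\mu_\FC^n = -\mu_\FC^n$, this rewrites as $\mathcal{Z}\cap\CX\neq\varnothing$. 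Corollary~\ref{cor:acvf-sat} then yields $\mathcal{Z}\cap p(\FC)\neq\varnothing$, i.e.\ some $\beta\in p(\FC)$ with $\gamma-\beta\in\mu_\FC^n$. Hence $\gamma = \beta+(\gamma-\beta) \in p(\FC)+\mu_\FC^n = \CP_\alpha^\mu$.

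The only conceptual step is the reformulation $\gamma\in\CX+\mu_\FC^n \Leftrightarrow (\gamma+\mu_\FC^n)\cap\CX\neq\varnothing$, which turns the hypothesis into exactly the consistency condition needed to invoke Corollary~\ref{cor:acvf-sat}. There is no genuine obstacle once one notices that $\mathcal{Z}$ is pro-semialgebraic; the rest is bookkeeping with the involutive nature of $\mu_\FC^n$.
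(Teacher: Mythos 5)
Your proof is correct and follows essentially the same route as the paper's: the easy inclusion uses that $\CX+\mu_\FC^n$ is $\CL_\val$-definable over $\CC$ and hence lies in $p(x)$, and the nontrivial inclusion reformulates membership in terms of the pro-semialgebraic set $\gamma+\mu_\FC^n$ meeting every $\CX\in p(x)$ and then invokes Corollary~\ref{cor:acvf-sat}. Your write-up of the $\supseteq$ direction is if anything slightly more explicit about where the hypothesis on $\gamma$ is used, but there is no substantive difference from the paper.
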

\begin{proof} Let $p(x)=\tp_\val(\alpha/\CC)$.

  The inclusion $\subseteq$ is easy. Indeed, let $\CX\subseteq \FC^n$ be
$\CL_\val$-definable over $\CC$ with $\alpha\in \CX+\mu_\FC^n$.  Then
$\CX+\mu_\FC^n$ is $\CL_\val$-definable over $\CC$ as well,  hence
$p(\FC)\subseteq \CX+\mu_\FC^n$ and 
\[\CP_\alpha^\mu=p(\FC)+\mu_\FC^n
\subseteq (\CX+\mu_\FC^n)+\mu_\FC^n= \CX+\mu_\FC^n.\]

For the inclusion $\supseteq$,  let 
\[ \beta\in \cap \{\CX+\mu_\FC^n \colon \CX\subseteq \FC^n  \text{ is
  $\CL_\val$-definable over $\CC$ with } 
\alpha\in \CX+\mu_\FC^n\}.\]
We need to show $\beta\in p(\FC)+\mu_\FC^n$, or equivalently
$(\beta+\mu_\FC^n) \cap p(\FC)\neq \emptyset$.  

Let $\CX\in p(x)$. Then
$(\CX+\mu_n^\FC) \in p(x)$ hence
 $\beta\in \CX+\mu_\FC^n$ and
$(\beta+\mu_\FC^n )\cap \CX\neq \emptyset$.   Since the set
$\beta+\mu_\FC^n$ is pro-semialgebraic, by
Corollary~\ref{cor:acvf-sat}, we obtain  $(\beta+\mu_\FC^n) \cap p(\FC)\neq \emptyset$.
\end{proof}

\begin{cor}\label{cor:mu-part}
For $\alpha,\beta\in \FC^n$ the following conditions are equivalent. 
\begin{enumerate}
\item  $\CP_\alpha^\mu=\CP^\mu_\beta$.
\item $\CP_\alpha^\mu \cap \CP^\mu_\beta \neq \emptyset$.
 \item $\alpha\in \CX+\mu_n^\FC \Leftrightarrow
\beta\in \CX+\mu_n^\FC$, for every $\CX\subseteq \FC^n$
that is $\CL_\val$-definable over $\CC$.
\end{enumerate}
\end{cor}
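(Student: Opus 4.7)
The plan is to prove the chain of implications $(1)\Rightarrow(2)\Rightarrow(3)\Rightarrow(1)$, leaning mainly on Proposition~\ref{prop:mu-sat} and on the observation that sets of the form $\CX+\mu_\FC^n$ are themselves $\CL_\val$-definable over $\CC$ whenever $\CX$ is. The latter holds because $\mu_\FC$ is $\CL_\val$-definable via $x\in\mu_\FC \Leftrightarrow x=0 \vee 1/x\notin\CO_\FC$, so the predicate "$y\in\CX+\mu_\FC^n$" is $\CL_\val$-definable uniformly in $y$.

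Implication $(1)\Rightarrow(2)$ is immediate: $\alpha\in p(\FC)\subseteq \CP_\alpha^\mu$, so $\CP_\alpha^\mu$ (hence the intersection) is nonempty. Implication $(3)\Rightarrow(1)$ is also a direct application of Proposition~\ref{prop:mu-sat}: that proposition expresses each of $\CP_\alpha^\mu$ and $\CP_\beta^\mu$ as an intersection over the family of all $\CL_\val$-definable-over-$\CC$ sets $\CX$ with $\alpha\in\CX+\mu_\FC^n$ (respectively $\beta\in \CX+\mu_\FC^n$). Condition $(3)$ asserts these two families coincide, so the intersections are equal.

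The heart of the argument is $(2)\Rightarrow(3)$. Given $\gamma\in \CP_\alpha^\mu\cap\CP_\beta^\mu$, I would write $\gamma=\alpha'+m_1=\beta'+m_2$ with $\alpha'\models p$, $\beta'\models q$, and $m_1,m_2\in\mu_\FC^n$, so that $\alpha'-\beta' = m_2-m_1 \in \mu_\FC^n$. Now suppose $\alpha\in\CX+\mu_\FC^n$ for some $\CL_\val$-definable-over-$\CC$ set $\CX$. Since $\CX+\mu_\FC^n$ is itself $\CL_\val$-definable over $\CC$ and $p=\tp_\val(\alpha/\CC)$ is a complete type, the set $\CX+\mu_\FC^n$ belongs to $p$, so every realization of $p$ lies in it; in particular $\alpha'\in\CX+\mu_\FC^n$. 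Writing $\alpha'=c+m$ with $c\in\CX$ and $m\in\mu_\FC^n$, we get $\beta'=c+\bigl(m-(\alpha'-\beta')\bigr)\in\CX+\mu_\FC^n$. Since $q=\tp_\val(\beta/\CC)$ is complete and $\beta'\models q$, it follows that $\beta\in \CX+\mu_\FC^n$; swapping the roles of $\alpha$ and $\beta$ gives the converse. The only place where any subtlety arises is checking the $\CL_\val$-definability of $\CX+\mu_\FC^n$; once that is in hand, the proof is a direct manipulation of complete $\CL_\val$-types and of the group structure of $\mu_\FC^n$, so there is no serious obstacle.
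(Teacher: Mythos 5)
Your proof is correct. The implications $(1)\Rightarrow(2)$ and $(3)\Rightarrow(1)$ match the paper's, but for $(2)\Rightarrow(3)$ you take a genuinely different route. The paper argues by contrapositive: if (3) fails, with $\alpha\in\CX+\mu_\FC^n$ but $\beta\notin\CX+\mu_\FC^n$, it sets $\CY=\FC^n\setminus(\CX+\mu_\FC^n)$, observes that $\CY$ is $\CL_\val$-definable over $\CC$ with $\CY+\mu_\FC^n=\CY$, and then applies Proposition~\ref{prop:mu-sat} to both points to conclude $\CP_\alpha^\mu\subseteq\CX+\mu_\FC^n$ while $\CP_\beta^\mu\subseteq\CY$, so the two sets are disjoint. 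You instead argue directly: pick a common point $\gamma$, write $\gamma=\alpha'+m_1=\beta'+m_2$ with $\alpha'\models p$, $\beta'\models q$, deduce $\alpha'-\beta'\in\mu_\FC^n$, propagate the containment $\alpha'\in\CX+\mu_\FC^n$ to $\beta'$ using the group structure of $\mu_\FC^n$, and pass from $\beta'$ to $\beta$ via completeness of $q$. Both arguments turn on the same key fact, which you state explicitly and the paper uses silently: $\mu_\FC$ is $\CL_\val$-definable over $\emptyset$, hence $\CX+\mu_\FC^n$ is $\CL_\val$-definable over $\CC$ whenever $\CX$ is. The paper's contrapositive is a bit more economical since it reuses Proposition~\ref{prop:mu-sat} once more; your direct argument avoids that dependence and makes the transfer mechanism more transparent, which is an equally legitimate trade-off.
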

\begin{proof}
Obviously $(1) \Rightarrow (2)$. 

$(3) \Rightarrow  (1)$ follows from Proposition~\ref{prop:mu-sat}.

We are left to show that (2) implies (3).  We will assume (3) fails and
show that $\CP_\alpha^\mu\cap P_\beta^\mu=\emptyset$.   

 Assume (3) fails, and say  $\alpha\in \CX+\mu_n^\FC$,
but $\beta\notin \CX+\mu_n^\FC$ for some $\CX\subseteq \FC^n$
that is $\CL_\val$-definable over $\CC$.

Let $\CY=\FC^n\setminus
(\CX+\mu_n^\FC)$.  We have $\beta\in \CY$, the set 
$\CY$ is $\CL_\val$-definable over $\CC$ and $\CY+\mu_\FC^n=\CY$. 

By  Proposition~\ref{prop:mu-sat} we have $\CP_\alpha^\mu\subseteq
\CX+\mu_\FC^n$ and $
\CP_\beta^\mu\subseteq
\CY$. Since $(\CX+\mu^n_\FC)\cap \CY=\emptyset$ we get $\CP_\alpha^\mu \cap \CP^\mu_\beta = \emptyset$
\end{proof}

Thus the family of sets $\{\CP_\alpha^\mu \colon \alpha\in \FC^n\}$
partitions $\FC^n$, and, by Lemma~\ref{lem:mu-add}, translations by
elements of $\CC^n$ respect this partition.

We are ready to show that the $\mu$-stabilizers $\Stab^\mu_\val(\alpha/\CC)$
have properties analogous to $\mu$-stabilizers in o-minimal theories.

\begin{thm}\label{thm:mu-def}Let $\alpha\in \FC^n$.
  \begin{enumerate}
  \item    The  $\mu$-stabilizer $\Stab^\mu_\val(\alpha/\CC)$  is an
  algebraic subgroup of $(\CC^n,+)$, i.e.\ a $\CC$-subspace of
  $\CC^n$.
\item  If $\alpha\in \FC^n$ is unbounded, i.e. $\alpha\notin
  \CO_\FC^n$, then $\Stab^\mu_\val(\alpha/\CC)$ is infinite.
  \end{enumerate}

\end{thm}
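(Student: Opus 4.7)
The plan is to establish (1) by transporting the problem into the pure field $\CC$ via stable embeddedness, and (2) by reducing to the o-minimal result from~\cite{mustab} inside the semialgebraic reduct.

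For (1), the fact that $\Stab^\mu_\val(\alpha/\CC)$ is a subgroup of $(\CC^n,+)$ is immediate from its definition as a translation stabilizer; the real content is closure under $\CC$-scalar multiplication. Combining Lemma~\ref{lem:mu-add} with Corollary~\ref{cor:mu-part} one first records the identity
\[\Stab^\mu_\val(\alpha/\CC)=\{v\in\CC^n:v+\alpha\in\CP_\alpha^\mu\},\]
and Proposition~\ref{prop:mu-sat} then exhibits this set as an intersection $\bigcap_\CX S_\CX$, where $\CX$ ranges over the $\CL_\val$-definable (over $\CC$) subsets of $\FC^n$ with $\alpha\in\CX+\mu_\FC^n$ and $S_\CX=\{v\in\CC^n:v+\alpha\in\CX+\mu_\FC^n\}$. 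Since $\mu_\FC$ is itself $\CL_\val$-definable, each $S_\CX$ is a subset of $\CC^n$ defined by an $\CL_\val$-formula over $\CC\cup\{\alpha\}$, and by the stable-embeddedness statement in Proposition~\ref{prop:ACVF} it is constructible in $\CC^n$. Hence $\Stab^\mu_\val(\alpha/\CC)$ is a type-definable subgroup of the algebraic group $(\CC^n,+)$ inside the pure algebraically closed field $\CC$. Now the Zariski-closed subgroups of $(\CC^n,+)$ are exactly the $\CC$-linear subspaces and therefore satisfy the descending chain condition; the $\omega$-stability of ACF then makes any such type-definable subgroup a finite intersection of definable subgroups, and any constructible subgroup $H$ of $(\CC^n,+)$ coincides with its Zariski closure $\bar H$---it contains a Zariski-dense open $U$ of the irreducible variety $\bar H$, and for any $g\in\bar H$ the dense open $g-U$ meets $U$, yielding $g\in U+U\subseteq H$. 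Together these facts force $\Stab^\mu_\val(\alpha/\CC)$ to be a $\CC$-linear subspace.

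For (2), identify $\FC^n$ with $\FR^{2n}$ and $\mu_\FC^n$ with $\mu_\FR^{2n}$, so that the hypothesis $\alpha\notin\CO_\FC^n$ reads $\alpha\notin\CO_\FR^{2n}$. Applying Fact~\ref{fact:mustab}(2) inside the $\CL_\sa$-reduct of $\FR_\full$, which is o-minimal, yields an infinite $\CL_\sa$-definable subgroup $\Stab^\mu_\sa(\alpha/\RR)\subseteq\RR^{2n}$. Setting $p_\sa=\tp_\sa(\alpha/\RR)$, it suffices to verify the inclusion $\Stab^\mu_\sa(\alpha/\RR)\subseteq\Stab^\mu_\val(\alpha/\CC)$: for $v$ in the former, the containment $\alpha\in p_\sa(\FC)$ gives
\[v+\alpha\in v+\bigl(p_\sa(\FC)+\mu_\FC^n\bigr)=p_\sa(\FC)+\mu_\FC^n,\]
while Corollary~\ref{cor:acvf-semi} and Remark~\ref{rem:val-sm} provide $p_\sa(\FC)\subseteq p(\FC)$, placing $v+\alpha$ in $p(\FC)+\mu_\FC^n=\CP_\alpha^\mu$; by the rewriting from~(1) this is exactly $v\in\Stab^\mu_\val(\alpha/\CC)$.

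The main obstacle is that $\FC_\val$ is not $\kappa$-saturated, so $\Stab^\mu_\val(\alpha/\CC)$ is only type-definable and resists a naive compactness argument inside $\FC_\val$ itself. The remedy for~(1) is to cut down to the residue field, where stable embeddedness converts the defining partial type into one in ACF and $\omega$-stability collapses it onto a $\CC$-subspace; the remedy for~(2) is to pass to the saturated semialgebraic reduct, where the o-minimal $\mu$-stabilizer from~\cite{mustab} is available and is readily seen to sit inside the valued-field one.
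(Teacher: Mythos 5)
Your treatment of part (2) matches the paper's: pass to the $\CL_\sa$-reduct, invoke Fact~\ref{fact:mustab}(2), and show $\Stab^\mu_\sa(\alpha/\RR)\subseteq\Stab^\mu_\val(\alpha/\CC)$ using Theorem~\ref{thm:acvf-semi} and Remark~\ref{rem:val-sm}. That part is fine.

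In part (1), however, there is a genuine gap. You correctly arrive, via Lemma~\ref{lem:mu-add}, Corollary~\ref{cor:mu-part} and Proposition~\ref{prop:mu-sat}, at the representation $\Stab^\mu_\val(\alpha/\CC)=\bigcap_\CX S_\CX$ with each $S_\CX$ constructible in $\CC^n$ by Proposition~\ref{prop:ACVF}. But the individual $S_\CX$ are merely constructible \emph{sets}, not subgroups, so you are left with a set-theoretic subgroup of $(\CC^n,+)$ presented as an intersection of constructible sets. You then invoke ``$\omega$-stability makes any such type-definable subgroup a finite intersection of definable subgroups.'' That stability fact is a statement about saturated models: it asserts that a \emph{type-definable} subgroup of a definable group in a monster model is an intersection of definable subgroups. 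Here the partial type $\{v\in S_\CX\}_\CX$ has parameters throughout $\CC$, a set of size $2^{\aleph_0}$, and $\CC$ is not $(2^{\aleph_0})^+$-saturated, so the fact does not apply directly. Worse, even passing to a saturated elementary extension $\CC^*\succ\CC$ is not automatic: there is no a priori reason that the set of realizations of this partial type in $\CC^{*n}$ is still a subgroup, since the closure under $+$ and $-$ has only been verified for realizations in $\CC^n$ and need not be a consequence of the type. (A subgroup of $(\CC,+)$ such as $\ZZ$ is an intersection of cofinite, hence constructible, subsets of $\CC$, so ``intersection of constructible sets, and a subgroup'' does not by itself yield ``constructible.'')

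The paper closes exactly this gap with one extra observation that your proof is missing: because the family of sets $\CL_\val$-definable over $\CC$ is invariant under translation by elements of $\CC^n$, the condition $v\in\Stab^\mu_\val(\alpha/\CC)$ is equivalent to requiring, for every such $\CX$ and every $u\in\CC^n$, that $\alpha\in u+\CX+\mu_\FC^n\Leftrightarrow v+\alpha\in u+\CX+\mu_\FC^n$. Introducing $F_\CX=\{u\in\CC^n:\alpha\in u+\CX+\mu_\FC^n\}$ and its set-wise stabilizer $G_\CX=\{v\in\CC^n:v+F_\CX=F_\CX\}$, the condition becomes $v\in\bigcap_\CX G_\CX$. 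Each $F_\CX$ is constructible by Proposition~\ref{prop:ACVF}, so each $G_\CX$ is an algebraic \emph{subgroup} of $(\CC^n,+)$, i.e.\ a $\CC$-subspace, and the intersection of $\CC$-subspaces of $\CC^n$ is again a $\CC$-subspace (with DCC giving the finite-intersection refinement). It is this passage from arbitrary constructible pieces $S_\CX$ to stabilizer \emph{subgroups} $G_\CX$ --- possible only because of the translation-invariance of the family of $\CL_\val$-definable sets over $\CC$ --- that makes the argument go through, and it cannot be replaced by a blanket appeal to $\omega$-stability in the non-saturated model $\CC$.
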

\begin{proof}
(1).   By   Lemma~\ref{lem:mu-add} we have 
\[ \Stab^\mu_\val(\alpha/\CC)=\{ v\in \CC^n \colon \CP_\alpha^\mu=\CP_{v+\alpha}^\mu\}.\] 

Let $v\in \CC^n$. Applying Corollary~\ref{cor:mu-part} we obtain that
$v\in \Stab^\mu_\val(\alpha/\CC)$ if and only if $\alpha\in \CX+\mu_n^\FC \Leftrightarrow
v+\alpha\in \CX+\mu_n^\FC$, for every $\CX\subseteq \FC^n$
that is $\CL_\val$-definable over $\CC$.

If $\CX\subseteq \FC^n$
 is $\CL_\val$-definable over $\CC$  and $u\in \CC^n$ then
 the set $u+\CX$ is $\CL_\val$-definable over $\CC$ as well. 
Thus for $v\in \CC^n$ we have that $v\in \Stab^\mu_\val(\alpha/\CC)$  
 if and only if for every $\CL_\val$-definable over $\CC$ set
 $\CX\subseteq \FC^n$ and every  $u\in
\CC^n$ we have 
$\alpha\in u+\CX+\mu_n^\FC \Leftrightarrow
v+\alpha\in u+\CX+\mu^n_\FC$.

For a set $\CX \subseteq \FC^n$ that is
$\CL_\val$-definable over $\CC$  let
\[F_\CX=\{ u\in \CC^n \colon \alpha\in u+\CX+\mu^n_\FC \}.\] 
Let $v\in \CC^n$.  It follows from the above discussion that 
$v\in \Stab^\mu_\val(\alpha/\CC)$ if and only if $-v+F_\CX=F_\CX$,
equivalently $\CX_F=\CX_F+v$  
for every  
 $\CX \subseteq \FC^n$ that is
$\CL_\val$-definable over $\CC$.

For a set $\CX \subseteq \FC^n$ that is
$\CL_\val$-definable over $\CC$ let
\[G_\mathcal{X}=\{ v\in \CC^n \colon v+F_\mathcal{X} = F_\mathcal{X}\} \]
be the stabilizer of the set $F_\mathcal{X}$ in  $(\CC^n,+)$.

 Obviously each
$G_\mathcal{X}$ is a subgroup of $(\CC^n,+)$.
By Proposition~\ref{prop:ACVF} every  $F_\CX$ is a constructible subset of $\CC^n$. Hence each
$G_\CX$ is an algebraic  subgroup of $(\CC^n,+)$.

As we observe above 
\[
\Stab^\mu_\val(\alpha/\CC)=\cap\{ G_\mathcal{X} \colon \mathcal X
  \subseteq \FC^n \text{ is  $\CL_\val$-definable over $\CC$}\}.
\]

\medskip
Thus
$\Stab^\mu_\val(\alpha/\CC)$ is an intersection of
  algebraic subgroups of $(\CC^n;+)$.  Since the field $\CC$
 satisfies  the Decreasing Chan Condition   on algebraic subgroups,
$\Stab^\mu_\val(\alpha/\CC)$ is an intersection of finitely many
$G_\CX$, hence is algebraic.

\medskip

(2). Assume  $\alpha\in \FC^n$ is unbounded.

Let $p_\sa(x)=\tp_\sa(\alpha/\RR)$ be  the
semialgebraic type of  $\alpha$ over $\RR$.
Since by Fact~\ref{fact:mustab}(2) $\Stab^\mu_\sa(p)$ is infinite,
it is
sufficient to show that $\Stab^\mu_\sa(p)\subseteq
\Stab^\mu_\val(\alpha/\CC)$.

Let $v\in \Stab^\mu_\sa(p)$. We have $v+\alpha \in p_\sa(\FC)+\mu_\FC^n$. By
Theorem~\ref{thm:acvf-semi} (see also Remark~\ref{rem:val-sm}), $p_\sa(\FC)\subseteq
\tp_\val(\alpha/\CC)$. Thus $v+\alpha \in p_\sa(\FC)+\mu_\FR^{2n}
\subseteq \CP_\alpha^\mu$.  By Corollary~\ref{cor:mu-part}, 
$\CP_\alpha^\mu=\CP_{v+\alpha}^\mu$, and, by  Lemma~\ref{lem:mu-add}, 
$v\in \Stab_\val^\mu(\alpha/\CC)$.
\end{proof}

\section{Affine asymptotes}
\label{sec:affine-assymptotes}

Using  an idea of Ullmo and Yafaev from \cite[Section 2]{UY} we introduce  the notion
of affine asymptotes.

As usual if $V$ is a vector space over $\CC$, then a translate of a $\CC$-linear
subspace of $V$  is called \emph{an affine $\CC$-subspace of $V$} or \emph{a $\CC$-flat
subset of $V$}.

\begin{defn} Let  $\alpha\in \FC^n$. The smallest  $\CC$-flat subset
$A\subseteq \CC^n$  with $\alpha\in A^\sharp+\mu^n_\FC$ is called
\emph{the asymptotic $\CC$-flat of $\alpha$} or just \emph{the $\CC$-flat of
$\alpha$} and is denoted by $A_\alpha^\CC$.
\end{defn}

To justify the above definition we need to show that such smallest
$\CC$-flat  exists. It follows from the following proposition.

\begin{prop}\label{prop:as-inter} Let $\alpha\in \FC^n$.
If  $A_1,A_2\subseteq \CC^n$ are $\CC$-flat subsets with $\alpha\in
A_i^\sharp+\mu_\FC^n$, $i=1,2$,  then $\alpha\in {(A_1\cap
A_2)}^\sharp+\mu_\FC^n$.
\end{prop}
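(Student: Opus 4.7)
The plan is to reduce to the purely linear case by first finding a common point of $A_1$ and $A_2$, and then to handle the linear case by choosing an adapted direct sum decomposition of $\CC^n$ and pushing the infinitesimal condition through $\CC$-linear projections.

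First I would show that $A_1\cap A_2\neq\emptyset$. Writing $A_i=p_i+V_i$ with $V_i$ a $\CC$-linear subspace, pick $a_i\in A_i^\sharp$ with $\alpha-a_i\in\mu_\FC^n$. Then $a_1-a_2\in\mu_\FC^n$, so $p_1-p_2\in V_1^\sharp+V_2^\sharp+\mu_\FC^n\subseteq (V_1+V_2)^\sharp+\mu_\FC^n$. A standard fact (which I would record as a preliminary lemma) is that for a $\CC$-linear subspace $V\subseteq\CC^n$ one has $V^\sharp\cap(\CC^n+\mu_\FC^n)\cap\CC^n=V$; equivalently, if $q\in\CC^n$ and $q\in V^\sharp+\mu_\FC^n$, then $q\in V$. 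This is verified by choosing any $\CC$-linear complement $W$ of $V$ and observing that the projection $\pi_W$, being $\CC$-linear with coefficients in $\CC$, sends $\mu_\FC^n$ into $\mu_\FC^n$, forcing $\pi_W(q)=0$. Applied to $q=p_1-p_2$, this gives $p_1-p_2\in V_1+V_2$, whence $A_1\cap A_2\neq\emptyset$.

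Once a common point $p\in A_1\cap A_2$ is fixed, set $\beta=\alpha-p$. Then $A_i=p+V_i$ and $\beta\in V_i^\sharp+\mu_\FC^n$ for $i=1,2$, while $A_1\cap A_2=p+(V_1\cap V_2)$. The problem reduces to showing $\beta\in(V_1\cap V_2)^\sharp+\mu_\FC^n$. Choose complements so that
\[
\CC^n=(V_1\cap V_2)\oplus W_1\oplus W_2\oplus W_3,
\]
with $V_1=(V_1\cap V_2)\oplus W_1$, $V_2=(V_1\cap V_2)\oplus W_2$, and $W_3$ a complement of $V_1+V_2$. Let $\pi_1,\pi_2,\pi_3$ denote the $\CC$-linear projections onto $W_1,W_2,W_3$; each sends $\mu_\FC^n$ into itself.

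From $\beta=v_1+\epsilon_1$ with $v_1\in V_1^\sharp$ and $\epsilon_1\in\mu_\FC^n$, we get $\pi_2(\beta)=\pi_2(\epsilon_1)\in\mu_\FC^n$ (since $\pi_2(V_1)=0$), and similarly $\pi_3(\beta)\in\mu_\FC^n$. Symmetrically, $\beta\in V_2^\sharp+\mu_\FC^n$ gives $\pi_1(\beta)\in\mu_\FC^n$. Writing $\beta=u+\pi_1(\beta)+\pi_2(\beta)+\pi_3(\beta)$ with $u\in(V_1\cap V_2)^\sharp$, one concludes $\beta-u\in\mu_\FC^n$, so $\alpha\in(p+u)+\mu_\FC^n\subseteq(A_1\cap A_2)^\sharp+\mu_\FC^n$.

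The only subtle point is the preliminary lemma that $\CC$-linear projections preserve $\mu_\FC^n$ and that standard vectors can be read off uniquely modulo $\mu_\FC^n$; this is elementary, so I do not expect a serious obstacle. The main conceptual step is choosing the four-term decomposition adapted to both $V_1$ and $V_2$, after which each assertion follows by applying a single coordinate projection.
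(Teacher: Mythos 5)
Your proof is correct, and it takes a genuinely different route from the paper's. The paper represents each $\CC$-flat $A_i$ as the solution set of a matrix equation $M_i x = r_i$, proves a claim asserting that $\alpha \in A^\sharp + \mu_\FC^n$ if and only if $M\alpha \in r + \mu_\FC^m$, and then concludes immediately by stacking the two systems into one and applying the claim twice. You instead work with the parametric description $A_i = p_i + V_i$: you first establish $A_1\cap A_2\neq\emptyset$ using the observation that a complex vector lying in $V^\sharp + \mu_\FC^n$ must already lie in $V$, then translate to reduce to linear subspaces, and finally annihilate the components of $\beta$ outside $V_1\cap V_2$ by applying $\CC$-linear coordinate projections along a four-term direct sum decomposition adapted to $V_1$ and $V_2$. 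Both proofs ultimately rest on the same elementary fact --- $\CC$-linear maps with complex matrices carry $\mu_\FC^n$ into $\mu_\FC^m$ --- but the mechanics differ: the paper buries the nonemptiness of $A_1\cap A_2$ inside the reduction to surjective $M$ in its Claim's proof, whereas you surface it as a separate and explicit preliminary step; your decomposition argument is somewhat more hands-on, while the paper's matrix-equation equivalence is terser and reusable. Either is a perfectly good proof; yours is arguably easier to follow for a reader who thinks of flats parametrically rather than as solution sets.
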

\begin{proof}
By an elementary linear algebra, $\CC$-flat subsets of $\CC^n$ are exactly solution
sets of the linear systems $Mx=r$, for an $m\times n$-matrix $M$ over $\CC$ and
$r\in \CC^m$ ($m$ is arbitrary).

We need a claim.
\begin{claim}\label{claim:lin-alg-cont}
Let $A$ be a $\CC$-flat subset of $\CC^n$
  given as the solution set of $Mx= r$, where $M$ is  an $m\times n$
  matrix over $\CC$.  Then $\alpha\in
  A^\sharp+\mu_\FC^n$ if and only if
  $M\alpha\in  r+\mu_\FC^m$.
\end{claim}
\begin{proof}[Proof of the claim.]
 If $\alpha\in A^\sharp+\mu_\FC^n$  then $\alpha\in \beta+\mu_\FC^n$ for some
 $\beta\in A^\sharp$, and
$M\alpha \in M\beta+M\mu_\FC^n  \subseteq r+\mu_\FC^m$.

For the right to left direction, assume $M\alpha\in  r +\mu_\FC^m$.
Replacing $\CC^m$ by the range of $M$ if needed we will assume
that the range of $M$ is the whole $\CC^m$.

Let $V_0\subseteq \CC^n$ be the kernel of $M$. We choose $V_1\subseteq \CC^n$ a
$\CC$-subspace complementary to $V_0$, so $\CC^n=V_0\oplus V_1$. We write $\alpha$
as $\alpha=\alpha_0+\alpha_1$ with $\alpha_0\in V_0^\sharp$, $\alpha_1\in
V_1^\sharp$. Since the restriction of $M$ to $V_1$ is an invertible $\CC$-linear map
from $V_1$ onto $\CC^m$  and $M\alpha_1\in r+\mu_\FC^m$, there is $\beta_1\in
V_1^\sharp$ with $\beta_1\in \alpha_1+\mu_\FC^n$ and $M\beta_1=r$. Obviously
$\alpha_0+\beta_1\in A^\sharp+\mu_\FC^n$, hence $\alpha\in A^\sharp+\mu_\FC^n$.

This finishes the proof of the claim.
 \end{proof}

We now proceed with the proof of the proposition.

Let $A=A_1\cap A_2$. For $i=1,2$ we choose   $m_i{\times}n$-matrices
  $M_i$ over $\CC$  and $ r_i \in \CC^{m_i}$,
such that $A_i$ is the solution set of $M_ix= r_i$.
Then $A$ is the solution set of $Mx= r$, where
$M$ is the $m\times n$ matrix $\begin{pmatrix} M_1 \\
  M_2  \end{pmatrix}$ and 
$ r =\begin{pmatrix} r_1 \\
  r_2  \end{pmatrix}$.

Using Claim~\ref{claim:lin-alg-cont} for $\alpha$ and  $A_1$ and $A_2$, we see that
$M\alpha=r+\epsilon$ for some $\epsilon \in \mu_\FC^{m_1+m_2}$. Using
Claim~\ref{claim:lin-alg-cont}
again we see that $\alpha\in A^\sharp +\mu_\FC^n$.
\end{proof}

\begin{defn}
 For a constructible set $X \subseteq \CC^n$ we will denote by $\CA^\CC(X)$ the set of
all  $\CC$-flats of  elements of $X^\sharp$, namely
\[ \CA^\CC(X)=\{ A^\CC_\alpha \colon \alpha \in X^\sharp \}.
 \]
\end{defn}

We say that  a family $\CF$ of subsets of $\CC^n$ is \emph{a
  constructible family } if there is a constructible set $T\subseteq
\CC^k$ and a constructible set $Y\subseteq \CC^n\times T$ such that
$\CF=\{ Y_t \colon t\in T\}$, where $Y_t$ is the fiber of $Y$ above $t$.

The next theorem follows easily from Proposition~\ref{prop:ACVF}.
\begin{thm}\label{thm:as-def}
 If $X\subseteq \CC^n$ is a constructible set then the family
    $\CA^\CC(X)$ is also constructible.
\end{thm}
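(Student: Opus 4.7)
The plan is to apply Proposition~\ref{prop:ACVF}(2) to a naturally $\CL_\val$-definable family of subsets of the parameter space of $\CC$-flats, and then extract $\CA^\CC(X)$ by combining Proposition~\ref{prop:as-inter} with Chevalley's theorem on images of constructible sets. Fix a constructible set $T_0\subseteq \CC^N$ parameterizing all $\CC$-flats of $\CC^n$ (say, the disjoint union of affine Grassmannians $\Graff(d,n)$ for $d=0,\dotsc,n$), together with the constructible incidence variety $Y_0\subseteq \CC^n\times T_0$ whose fiber over $t\in T_0$ is the flat $A_t$; for $t\in T_0^\sharp$, set $\tilde A_t:=(Y_0^\sharp)_t\subseteq \FC^n$. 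The set
\[
\mathcal{E} \,:=\, \{(\alpha,t)\in X^\sharp\times T_0^\sharp : \alpha\in \tilde A_t+\mu_\FC^n\}
\]
is $\CL_\val$-definable by Claim~\ref{claim:lin-alg-cont} together with the $\CL_\val$-definability of $\mu_\FC^n$. Viewing its fibers $\mathcal{E}_\alpha$ as an $\CL_\val$-definable family of subsets of $\FC^N$, Proposition~\ref{prop:ACVF}(2) yields constructible $T'\subseteq \CC^{k'}$ and $Y'\subseteq T_0\times T'$ with $\{Y'_s\}_{s\in T'}=\{\mathcal{E}_\alpha\cap T_0\}_{\alpha\in X^\sharp}$.

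For each $s\in T'$ we have $Y'_s=\{t\in T_0:\alpha\in A_t^\sharp+\mu_\FC^n\}$ for some $\alpha\in X^\sharp$, and by Proposition~\ref{prop:as-inter} this family of $\CC$-flats is closed under intersection; the descending chain condition on $\CC$-flats then produces a unique smallest element $\bigcap_{t\in Y'_s}A_t$, which is precisely $A^\CC_\alpha$. Define $\phi\colon T'\to T_0$ by sending $s$ to the parameter of this minimum. Its graph in $T'\times T_0$ is the $\CL_\al$-definable set of pairs $(s,t^*)$ satisfying (i) $A_{t^*}\subseteq A_t$ for every $t\in Y'_s$, and (ii) for every $t^{**}\in T_0$ with $A_{t^{**}}\subseteq A_t$ for all $t\in Y'_s$, one has $A_{t^{**}}\subseteq A_{t^*}$. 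Both conditions are first-order over $\CC$ since $Y'$, $Y_0$ and flat inclusion are constructible. By Chevalley's theorem the image $T:=\phi(T')\subseteq T_0$ is constructible, and $Y:=Y_0\cap (\CC^n\times T)$ gives the desired constructible parameterization $\CA^\CC(X)=\{Y_t\}_{t\in T}$.

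The main subtlety is that minimality among $\CC$-flats cannot be expressed directly as an $\CL_\val$-formula on $T_0^\sharp$: the smallest $\FC$-flat $\tilde A$ with $\alpha\in \tilde A+\mu_\FC^n$ may be strictly smaller than $(A^\CC_\alpha)^\sharp$, as happens for instance when $\alpha=(1,0,\tau)$ with $\tau\in \FC\setminus(\CC+\mu_\FC)$. One therefore cannot simply append a ``minimal'' clause to $\mathcal{E}$ and invoke Proposition~\ref{prop:ACVF}(1); the detour through $T'$ and the set-theoretic minimum operation on the resulting constructible family of $\CC$-flats is what circumvents this.
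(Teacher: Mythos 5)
Your proof is correct. The paper gives no details here (it just asserts the theorem "follows easily from Proposition~\ref{prop:ACVF}"), and you have correctly identified the one real subtlety that makes the naive argument fail: the predicate "$A_t$ is a smallest flat with $\alpha\in A_t^\sharp+\mu_\FC^n$" cannot be written as an $\CL_\val$-formula in $t$ and then fed into Proposition~\ref{prop:ACVF}(1), because over $\FC$ the minimum is taken among $\FC$-flats and is always the singleton $\{\alpha\}$. Your workaround---apply Proposition~\ref{prop:ACVF}(2) to the $\CL_\val$-definable family $\{\mathcal E_\alpha\}_\alpha$ to obtain a constructible family of subsets of the Grassmannian, then use Proposition~\ref{prop:as-inter} plus DCC to extract the unique minimum from each member, with Chevalley giving constructibility of the resulting set of parameters---is sound, and the verification that the minimum of $\{A_t:t\in Y'_s\}$ is exactly $A^\CC_\alpha$ is correct.

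One slightly more direct variant is worth noting. Instead of working in the Grassmannian and then minimizing, consider for each $\alpha\in X^\sharp$ the set of affine hyperplanes through $\alpha$ modulo $\mu_\FC$:
\[ Z_\alpha=\{(v,c)\in\CC^n\times\CC : v\cdot\alpha - c\in\mu_\FC\}. \]
This is $\CL_\val$-definable (with parameter $\alpha$), so by Proposition~\ref{prop:ACVF}(2) the family $\{Z_\alpha\}_{\alpha\in X^\sharp}$ is constructible, say $Z_\alpha=Z'_{u(\alpha)}$ for $Z'\subseteq(\CC^n\times\CC)\times T$ constructible with $\{Z'_u:u\in T\}=\{Z_\alpha:\alpha\in X^\sharp\}$. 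By Claim~\ref{claim:lin-alg-cont} one has
\[ A^\CC_\alpha=\bigcap_{(v,c)\in Z_\alpha}\{x\in\CC^n:v\cdot x=c\}, \]
so putting $Y=\{(x,u)\in\CC^n\times T: v\cdot x=c \text{ for all } (v,c)\in Z'_u\}$ directly gives a constructible family with $\CA^\CC(X)=\{Y_u:u\in T\}$, sidestepping the explicit minimization, DCC and Chevalley steps. Both routes pass through the same key step---applying Proposition~\ref{prop:ACVF}(2) and then performing a purely algebraic operation on the resulting constructible family over $\CC$---so the content is the same; your version requires slightly more bookkeeping but is equally valid.
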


\begin{sample}
(1). Consider the curve $X\subseteq  \CC^2$ given by $xy=1$.

 Let $\alpha=(\alpha_1,\alpha_2)\in X^\sharp$. If $\alpha$ is bounded,
 i.e. $\alpha\in \CO^2_\FC$,  then $A_\alpha^\CC$ is just the point
 $\st(\alpha)$.
Also notice that since $X$ is closed we have $\st(\alpha)\in X$.

 If $\alpha$ is unbounded then either $\alpha_1\notin \CO_\FC$ and
 $\alpha_2\in \mu_\FC$ or $\alpha_1 \in \mu_\FC $ and
 $\alpha_2 \notin \CO_\FC$.

In the first case we get $A_\alpha^\CC=\CC\times 0$, and in the
second $A_\alpha^\CC= 0 \times \CC$.

Thus $\CA^\CC(X)$ consists of all points in $X$ together with two lines
$\CC{\times}0$ and $0{\times}\CC$.

\medskip

(2). Consider the curve $X\subseteq  \CC^2$ given by $y=x^2$.

 Let $\alpha \in X^\sharp$. Again if $\alpha$ is bounded
 then $A_\alpha^\CC$ is
 $\st(\alpha)$.

If $\alpha$ is unbounded then $\alpha=(\xi,\xi^2)$ with $\xi\notin \CO_\FC$.  It is
easy to see that $\xi$ and $\xi^2$ are $\CC$-independent modulo 
$\CO_\FC$, i.e.\ for
$c_1,c_2\in \CC$, if  $c_1\xi+c_2\xi^2\in \CO_\FC$ then $c_1=c_2=0$.  It follows
then that $A_\alpha^\CC=\CC^2$.

Thus in this case $\CA^\CC(X)$ consists of all points in  $X$ together with the
plane $\CC^2$.

\medskip
(3). If we take $X=\CC^2$ then $\CA^\CC(X)$  will be the set of all
$\CC$-flat subsets of $\CC^2$.
\end{sample}

\begin{rem}\label{re:ana}
Since  the theory of algebraically closed fields of characteristic zero with an
embedded residue field is complete  we can define
$\mathcal{A}^\CC(X)$ using the field  of  convergent  Puiseux series instead of
$\FC$, and get an analytic interpretation of $\mathcal{A}^\CC(X)$ as follows.

We denote by  $\CC(\{z\})$  the field of germs meromorphic functions at $0\in
\CC$.

For  constructible set $X\subseteq \CC^n$  we denote by
$X_\mathrm{an}$ the set of  $\CC(\{z\})$-points on $X$,  in other words
\[ X_\mathrm{an}=\{(f_1,\dotsc,f_n)\in \CC(\{x\})^n \colon (f_1(z),\dotsc,f_n(z))
\in X \text{ for all $z$ near $0$} \}. \]

 For  $f\in X_\mathrm{an}$ let $A_f\subseteq \CC^n$ be
the smallest $\CC$-flat subset  of $\CC^n$ such that the distance from $f(z)$ to
$A_z$ tends to $0$ as $z$ approaches $0$.  Then $\mathcal{A}^\CC(X)=\{ A_f
\colon f\in X_\mathrm{an}\}$.
\end{rem}

\medskip
We  need some basic properties of $A_\alpha^\CC$.

\begin{lem}\label{lem:Aalpha-bounded}
Let $\alpha\in \FC^n$. Then   $\alpha$ is bounded (i.e. $\alpha\in \CO_\FC^n$) if
and only if $\dim_\CC(A^\CC_\alpha)=0$. Also if $\dim_\CC(A_\alpha^\CC)=0$ then
$A_\alpha^\CC=\st(\alpha)$.
  \end{lem}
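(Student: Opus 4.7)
The plan is to unfold the definition of $A_\alpha^\CC$ as the minimal $\CC$-flat with $\alpha \in A^\sharp + \mu_\FC^n$ and relate it directly to the standard part map.

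First I would handle the easier direction: assume $\alpha \in \CO_\FC^n$ is bounded. Then $\st(\alpha) \in \CC^n$ is defined coordinatewise, and by the very definition of the standard part, $\alpha - \st(\alpha) \in \mu_\FC^n$. Since the singleton $\{\st(\alpha)\}$ is a $0$-dimensional $\CC$-flat subset of $\CC^n$ and $\{\st(\alpha)\}^\sharp = \{\st(\alpha)\}$ inside $\FC^n$, we have $\alpha \in \{\st(\alpha)\}^\sharp + \mu_\FC^n$. By minimality of $A_\alpha^\CC$, it follows that $A_\alpha^\CC \subseteq \{\st(\alpha)\}$, and since $A_\alpha^\CC$ is nonempty (the definition guarantees at least one witnessing flat exists, e.g.\ $\CC^n$, and the intersection of witnesses is nonempty by Proposition~\ref{prop:as-inter}), we conclude $A_\alpha^\CC = \{\st(\alpha)\}$, so in particular $\dim_\CC A_\alpha^\CC = 0$.

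For the converse, assume $\dim_\CC A_\alpha^\CC = 0$. Then $A_\alpha^\CC$ is a single point $\{c\}$ with $c \in \CC^n$, and the defining property of $A_\alpha^\CC$ gives $\alpha \in \{c\}^\sharp + \mu_\FC^n = c + \mu_\FC^n$. Hence $\alpha - c \in \mu_\FC^n \subseteq \CO_\FC^n$, and since $c \in \CC^n \subseteq \CO_\FC^n$, we obtain $\alpha \in \CO_\FC^n$, i.e.\ $\alpha$ is bounded. Moreover, $\alpha - c \in \mu_\FC^n$ means $\st(\alpha) = c$, yielding $A_\alpha^\CC = \{\st(\alpha)\}$ as required.

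I do not foresee a real obstacle; the statement is essentially a bookkeeping consequence of how the $\CC$-flat $A_\alpha^\CC$ was defined together with the decomposition $\CO_\FC^n = \CC^n \oplus \mu_\FC^n$. The only care needed is to verify that singletons in $\CC^n$ do qualify as $0$-dimensional $\CC$-flats (they do, being cosets of the zero subspace) and to note that $\{c\}^\sharp = \{c\}$ in $\FC^n$, so the minimality condition in the definition of $A_\alpha^\CC$ applies directly.
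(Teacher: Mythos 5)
Your proof is correct and follows essentially the same argument as the paper's: both directions reduce to the observation that $\dim_\CC(A_\alpha^\CC)=0$ iff $\alpha\in c+\mu_\FC^n$ for some $c\in\CC^n$, which is exactly the condition for $\alpha$ to be bounded with $\st(\alpha)=c$. The paper states this in one line; you simply unfold the bookkeeping, which is fine.
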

  \begin{proof}
It follows from the definition of $A_\alpha^\CC$ that $\dim_\CC (A_\alpha^\CC)=0$ if
and only if $\alpha\in c+\mu_\FC^n$ for some $c\in \CC^n$, i.e. $\alpha$ is bounded
and $A_\alpha^\CC=c$.
   \end{proof}

\begin{lem}\label{lem:ha-is-in-la}
If $\alpha\in \FC^n$ then   $A_\alpha^\CC$ is invariant under
translations by elements of $\Stab^\mu_\val(\alpha/\CC)$.
\end{lem}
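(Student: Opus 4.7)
The plan is to show that if $v\in\Stab^\mu_\val(\alpha/\CC)$ and $A=A_\alpha^\CC$, then the translate $A-v$ is again a $\CC$-flat satisfying $\alpha\in (A-v)^\sharp+\mu_\FC^n$. Once we know this, the minimality built into the definition of $A_\alpha^\CC$ (together with Proposition~\ref{prop:as-inter}) will force $A$ and $A-v$ to be equal.

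\textbf{Step 1: Transferring between $\alpha$ and $v+\alpha$.} From $v\in\Stab^\mu_\val(\alpha/\CC)$ and Lemma~\ref{lem:mu-add} we obtain $\CP_{v+\alpha}^\mu = v+\CP_\alpha^\mu=\CP_\alpha^\mu$. Applying Corollary~\ref{cor:mu-part}, condition (3) then yields that for every $\CL_\val$-definable set $\CX\subseteq\FC^n$ over $\CC$,
\[
\alpha\in \CX+\mu_\FC^n\iff v+\alpha\in \CX+\mu_\FC^n.
\]

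\textbf{Step 2: Applying to $\CX=A^\sharp$.} The $\CC$-flat $A$ is cut out by $\CC$-linear equations, so $A^\sharp$ is $\CL_\al$-definable (in particular $\CL_\val$-definable) over $\CC$. By definition of $A_\alpha^\CC$ we have $\alpha\in A^\sharp+\mu_\FC^n$, so Step 1 gives $v+\alpha\in A^\sharp+\mu_\FC^n$; equivalently, since $(A-v)^\sharp=A^\sharp-v$ for $v\in\CC^n$, we get $\alpha\in (A-v)^\sharp+\mu_\FC^n$.

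\textbf{Step 3: Minimality and conclusion.} Now $A-v$ is a $\CC$-flat containing $\alpha$ modulo $\mu_\FC^n$, and so is $A$. By Proposition~\ref{prop:as-inter}, the intersection $A\cap(A-v)$ is also a $\CC$-flat containing $\alpha$ modulo $\mu_\FC^n$. Minimality of $A=A_\alpha^\CC$ forces $A\subseteq A-v$. Since $A$ and $A-v$ are translates of the same $\CC$-linear subspace (namely the direction of $A$), the inclusion of one parallel flat in another forces equality, i.e.\ $v+A=A$. As this holds for every $v\in\Stab^\mu_\val(\alpha/\CC)$, we conclude that $A_\alpha^\CC$ is invariant under translation by this group.

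\textbf{Main obstacle.} All ingredients are already in place; the only subtlety is recognizing that one should apply Corollary~\ref{cor:mu-part} with $\CX=A^\sharp$ to import the invariance property of the $\mu$-stabilizer into the world of $\CC$-flats, after which Proposition~\ref{prop:as-inter} closes the argument.
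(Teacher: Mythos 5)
Your proof is correct, and it proceeds by essentially the same route as the paper: both arguments hinge on Corollary~\ref{cor:mu-part}(3), which converts membership in $\Stab^\mu_\val(\alpha/\CC)$ into the statement that $\alpha$ and $v+\alpha$ lie in exactly the same sets of the form $\CX+\mu^n_\FC$ with $\CX$ $\CL_\val$-definable over $\CC$. The only cosmetic difference is in how the endgame is packaged: the paper first records (without proof) that $A^\CC_\alpha$ depends only on $\CP^\mu_\alpha$ and that $A^\CC_{v+\alpha}=v+A^\CC_\alpha$, then chains these two equalities; you instead apply the criterion directly to the single set $\CX=A^\sharp$, obtain $\alpha\in (A-v)^\sharp+\mu^n_\FC$, and close via Proposition~\ref{prop:as-inter} together with the fact that a flat contained in a parallel flat of the same dimension must equal it. Your version spells out a detail the paper leaves implicit (why $\CP^\mu_\alpha=\CP^\mu_\beta$ forces $A^\CC_\alpha=A^\CC_\beta$), and is equally valid.
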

\begin{proof}
Notice that if $\alpha$ and $\beta$ realize the same $\CL_\val$-type
then $A^\CC_\alpha=A^\CC_\beta$. Moreover, the same is true if
$\CP_\alpha^\mu=\CP_\beta^\mu$, where $\CP_\alpha^\mu$ as in
Definition~\ref{defn:val-stab}

Now, if $v\in \Stab^\mu_\val(\alpha/\CC)$ then
$\CP^\mu_\alpha=v+\CP^\mu_\alpha=\CP_{v+\alpha}^\mu$. By what we just noted,
\[A^\CC_{\alpha}=A^\CC_{v+\alpha}=v+A^{\CC}_{\alpha},\]
as claimed.
\end{proof}

The lemma below follows easily from Lemma~\ref{lem:ha-is-in-la} and we leave its
proof to the reader.
 \begin{lem}\label{lem:la-subspace}
Let $\alpha\in \FC^n$, $H_\alpha=\Stab^\mu_\val(\alpha/\CC)$ and $V_1\subseteq
\CC^n$ a subspace complementary to $H_\alpha$. Let $\pi\colon \CC^n\to V_1$ be the
projection along $H_\alpha$ and $\alpha_1=\pi(\alpha)$. Then
$A^\CC_\alpha=H_\alpha\oplus A^\CC_{\alpha_1}$.
\end{lem}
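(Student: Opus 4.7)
The plan is to establish the two inclusions separately, using the ambient decomposition $\CC^n = H_\alpha \oplus V_1$, which extends coordinatewise to $\FC^n = H_\alpha^\sharp \oplus V_1^\sharp$. The projection $\pi$ is $\CC$-linear, so it extends naturally to a map $\FC^n \to V_1^\sharp$, and since $\mu_\FC$ is an $\CO_\FC$-module and $\CC \subseteq \CO_\FC$, the extended $\pi$ maps $\mu_\FC^n$ into itself. A preliminary observation is that $A^\CC_{\alpha_1} \subseteq V_1$: indeed $\alpha_1 \in V_1^\sharp \subseteq V_1^\sharp + \mu_\FC^n$, and $V_1$ is itself a $\CC$-flat in $\CC^n$, so the minimality clause in the definition of $A^\CC_{\alpha_1}$ forces $A^\CC_{\alpha_1}\subseteq V_1$. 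Since $V_1 \cap H_\alpha = 0$, the sum $H_\alpha + A^\CC_{\alpha_1}$ is actually direct, which justifies the notation $\oplus$ in the statement.

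For the inclusion $A^\CC_\alpha \subseteq H_\alpha + A^\CC_{\alpha_1}$, I would write $\alpha = (\alpha-\alpha_1) + \alpha_1$. The vector $\alpha-\alpha_1$ lies in $H_\alpha^\sharp$ because $\pi(\alpha-\alpha_1) = \alpha_1-\alpha_1 = 0$. By definition of $A^\CC_{\alpha_1}$ we have $\alpha_1 \in (A^\CC_{\alpha_1})^\sharp + \mu_\FC^n$, so
\[
 \alpha \in H_\alpha^\sharp + (A^\CC_{\alpha_1})^\sharp + \mu_\FC^n = (H_\alpha+A^\CC_{\alpha_1})^\sharp + \mu_\FC^n.
\]
Since $H_\alpha + A^\CC_{\alpha_1}$ is a $\CC$-flat, the minimality of $A^\CC_\alpha$ yields $A^\CC_\alpha \subseteq H_\alpha+A^\CC_{\alpha_1}$.

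For the reverse inclusion, I would invoke Lemma~\ref{lem:ha-is-in-la}: $A^\CC_\alpha$ is stable under translation by $H_\alpha$. Decomposing any $a\in A^\CC_\alpha$ as $a = \pi_0(a) + \pi(a)$ with $\pi_0(a) \in H_\alpha$, the $H_\alpha$-invariance gives $\pi(a) = a - \pi_0(a) \in A^\CC_\alpha$, which upgrades to the equality $A^\CC_\alpha = H_\alpha + \pi(A^\CC_\alpha)$. It then remains to show $A^\CC_{\alpha_1} \subseteq \pi(A^\CC_\alpha)$. Applying the $\CC$-linear map $\pi$ to a decomposition $\alpha = a + \varepsilon$ with $a\in (A^\CC_\alpha)^\sharp$ and $\varepsilon \in \mu_\FC^n$ produces $\alpha_1 = \pi(a)+\pi(\varepsilon) \in \pi(A^\CC_\alpha)^\sharp + \mu_\FC^n$ (using $\pi(\mu_\FC^n)\subseteq \mu_\FC^n$). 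Since $\pi(A^\CC_\alpha)$ is a $\CC$-flat, minimality of $A^\CC_{\alpha_1}$ forces the desired containment, and therefore $H_\alpha + A^\CC_{\alpha_1} \subseteq H_\alpha + \pi(A^\CC_\alpha) = A^\CC_\alpha$.

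No serious obstacle is expected: the argument is a direct unpacking of the definition of $A^\CC_\alpha$ together with the $H_\alpha$-invariance supplied by Lemma~\ref{lem:ha-is-in-la}. The only technical step requiring a remark is the routine verification that the $\CC$-linear projection $\pi$ respects the infinitesimal neighborhood $\mu_\FC^n$, which is immediate from the $\CO_\FC$-module structure of $\mu_\FC$.
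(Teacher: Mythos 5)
Your proof is correct and is essentially the argument the paper has in mind when it says the lemma ``follows easily from Lemma~\ref{lem:ha-is-in-la}'' and leaves it to the reader. Both inclusions are handled cleanly: the easy direction $A^\CC_\alpha\subseteq H_\alpha+A^\CC_{\alpha_1}$ by splitting $\alpha=(\alpha-\alpha_1)+\alpha_1$ and appealing to minimality, and the reverse direction by using the $H_\alpha$-invariance of $A^\CC_\alpha$ from Lemma~\ref{lem:ha-is-in-la} together with the fact that the $\CC$-linear $\pi$ preserves $\mu_\FC^n$. Your preliminary observation that $A^\CC_{\alpha_1}\subseteq V_1$ (justifying the directness of the sum) is a nice touch that the paper omits. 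One small simplification: once you know $A^\CC_{\alpha_1}\subseteq\pi(A^\CC_\alpha)\subseteq A^\CC_\alpha$, the containment $H_\alpha+A^\CC_{\alpha_1}\subseteq H_\alpha+A^\CC_\alpha=A^\CC_\alpha$ follows directly without the intermediate identity $A^\CC_\alpha=H_\alpha+\pi(A^\CC_\alpha)$, though your route is equally valid.
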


\section{Describing $\cl(X+\Lambda)$ using asymptotic flats}
\label{sec:first-reduction}

The main goal of this section is to   describe $\cl(X+\Lambda)$ as the 
union: 
\[ \cl(X+\Lambda)=\bigcup_{A\in \CA^\CC(X)} \cl(A+\Lambda). \]

The next proposition is the key ingredient. 
\begin{prop}\label{prop:main} Let $\alpha\in \FC^n$ and let $\Lambda \leq
  \CC^n$ be an additive  subgroup  whose
$\RR$-span is $\CC^n$.
 Let $p(x)=\tp_\val(\alpha/\CC)$.
Then $\st(p(\FC)+\Lambda^\sharp)$ is a closed subset of $\CC^n$ and
\[ \cl(A_\alpha^\CC +\Lambda) = \st(p(\FC)+\Lambda^\sharp).\]
 \end{prop}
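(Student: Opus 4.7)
The plan is to reduce both sides to statements about standard parts in $\FC^n$, replace $p(\FC)$ by a semialgebraic type, and then prove two inclusions via saturation in $\FR_\full$. Throughout, let $A=A_\alpha^\CC$ and let $V$ be its linear part, with $a_0 \in A$ and $\alpha - a_0 \in V^\sharp+\mu_\FC^n$.

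First I would apply Claim~\ref{claim:cl-stand-part}(1) to rewrite $\cl(A+\Lambda)=\st(A^\sharp+\Lambda^\sharp)$, so the entire statement becomes an equality of standard-part images in $\FC^n$. Next, using Corollary~\ref{cor:acvf-semi} I pick a semialgebraic type $r_\alpha(x)$ over $\RR$ with $r_\alpha(\FC)=p(\FC)$, writing $p(\FC)=\bigcap_{Y\in r_\alpha} Y^\sharp$ where each $Y$ is semialgebraic in $\CC^n$. The inclusion $p(\FC)+\Lambda^\sharp \subseteq \bigcap_Y (Y^\sharp+\Lambda^\sharp)$ gives, after taking standard parts and using Claim~\ref{claim:cl-stand-part}(2) applied to the sharps $(Y+\Lambda)^\sharp=Y^\sharp+\Lambda^\sharp$, the inclusion $\st(p(\FC)+\Lambda^\sharp)\subseteq \bigcap_Y \cl(Y+\Lambda)$. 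For the reverse, I would invoke the $\kappa$-saturation of $\FR_\full$: given $w\in \bigcap_Y\cl(Y+\Lambda)$, the $\CL_\full$-type asserting $z\in Y^\sharp$ for all $Y\in r_\alpha$, $\lambda\in\Lambda^\sharp$, and $z+\lambda-w\in I_j^n$ for all $j\geq 1$, is finitely satisfiable by standard witnesses since $w\in \cl(Y+\Lambda)$ for every finite intersection. This yields $\st(p(\FC)+\Lambda^\sharp)=\bigcap_{Y\in r_\alpha}\cl(Y+\Lambda)$, which is manifestly closed as an intersection of closed sets.

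For the easy inclusion $\st(p(\FC)+\Lambda^\sharp)\subseteq \cl(A+\Lambda)$: the set $A^\sharp+\mu_\FC^n$ is $\CL_\val$-definable over $\CC$ and contains $\alpha$, so it belongs to $p(x)$, giving $p(\FC)\subseteq A^\sharp+\mu_\FC^n$. Hence $p(\FC)+\Lambda^\sharp \subseteq A^\sharp+\Lambda^\sharp+\mu_\FC^n$, and applying $\st$ (which kills the $\mu_\FC^n$) gives $\st(p(\FC)+\Lambda^\sharp)\subseteq \st(A^\sharp+\Lambda^\sharp)=\cl(A+\Lambda)$.

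The main obstacle is the reverse inclusion $\cl(A+\Lambda)\subseteq \st(p(\FC)+\Lambda^\sharp)=\bigcap_Y \cl(Y+\Lambda)$, which reduces, by $\Lambda$-invariance of the right-hand side, to showing $A\subseteq \cl(Y+\Lambda)$ for every $Y\in r_\alpha$. I would argue this by induction on $\dim_\CC V$. In the base case $V=\{0\}$, Lemma~\ref{lem:Aalpha-bounded} gives $A=\{\st(\alpha)\}$, and since $\alpha\in Y^\sharp\cap \CO_\FC^n$ we have $\st(\alpha)\in \cl(Y)$. For the inductive step, I would exploit the $\mu$-stabilizer $H=\Stab^\mu_\val(\alpha/\CC)$, which by Theorem~\ref{thm:mu-def} is a positive-dimensional $\CC$-subspace of $\CC^n$ contained in $V$; when $H=V$, one shows directly via $H^\sharp$-translations of $\alpha$ that every $a\in A$ lies in $\st(p(\FC))\subseteq \cl(Y)$. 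When $H\subsetneq V$, Lemma~\ref{lem:la-subspace} allows us to factor $A=H\oplus A_{\pi(\alpha)}^\CC$ along a projection $\pi\colon \CC^n\to V_1$ where $V_1$ is complementary to $H$; the asymptotic flat $A_{\pi(\alpha)}^\CC\subseteq V_1$ has strictly smaller linear dimension, so applying the inductive hypothesis to $\pi(\alpha)$ with the subgroup $\pi(\Lambda)\leq V_1$ (whose $\RR$-span is still $V_1$) gives $A_{\pi(\alpha)}^\CC \subseteq \cl(\pi(Y)+\pi(\Lambda))$ inside $V_1$, which lifts to $A\subseteq \cl(Y+\Lambda)+H \subseteq \cl(Y+\Lambda)$ after controlling the $H$-fiber using Fact~\ref{fact:kron} and $H$-invariance of both $A$ and the closed set $\cl(Y+\Lambda)+H$. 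The delicate point in this inductive step will be establishing the $H^\sharp$-stability of $p(\FC)+\mu_\FC^n$ in $\FC^n$ (needed in the $H=V$ base) using the algebraicity of $H$ from Theorem~\ref{thm:mu-def}(1) together with the description in Proposition~\ref{prop:mu-sat}.
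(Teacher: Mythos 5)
Your setup (rewriting via the standard-part map, introducing the semialgebraic type $r_\alpha$, establishing closedness, and the easy inclusion via $p(\FC)\subseteq A^\sharp+\mu_\FC^n$) matches the paper's and is correct. The reduction of the hard inclusion to showing $A\subseteq\cl(Y+\Lambda)$ for each $Y\in r_\alpha$ is also logically sound, since $\cl(Y+\Lambda)$ is closed and $\Lambda$-invariant. However, the inductive step has a genuine gap, and it is caused precisely by this reduction.

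Once you fix a single $Y$, the target set $\cl(Y+\Lambda)$ is \emph{not} invariant under translation by $H=\Stab^\mu_\val(\alpha/\CC)$, so the claimed chain $A\subseteq\cl(Y+\Lambda)+H\subseteq\cl(Y+\Lambda)$ breaks at the second inclusion: $\cl(Y+\Lambda)+H\subseteq\cl(Y+\Lambda)$ would require exactly the $H$-invariance you do not have. Relatedly, applying the inductive hypothesis to $\pi(\Lambda)$ inside $V_1$ (rather than to $H_\alpha+\Lambda$ inside $\CC^n$, as the paper does) only yields $A\subseteq\cl(Y+\Lambda+H)$, which is strictly weaker than what is needed. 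The paper avoids this by working throughout with the single set $\st(p(\FC)+\Lambda^\sharp)$, which carries the full type $p$; it then obtains a witness $\beta_1\in p_1(\FC)$ from the inductive hypothesis, lifts it along $\pi$ via Corollary~\ref{cor:acvf-proj} to $\beta\in p(\FC)$, and crucially uses the $\mu$-stabilizer identity $\beta+H_\alpha\subseteq p(\FC)+\mu_\FC^n$ together with a compact fundamental domain $F\subseteq H_\alpha^\Lambda$ with $H_\alpha^\Lambda\subseteq F+\Lambda$ to shift the $H_\alpha^\sharp$-displacement into $\Lambda^\sharp$ plus a bounded piece. That interplay between the $\mu$-stabilizer, Fact~\ref{fact:kron}, and the compactness of $F$ is the engine of the proof, and it disappears once you pass to an individual $Y$.

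The case $H=V$ you isolate also does not quite work as stated: the claim that every $a\in A$ lies in $\st(p(\FC))$ via ``$H^\sharp$-translations'' presupposes that translating $\alpha$ by elements of $H^\sharp$ preserves the $\CL_\val$-type $p$, but the $\mu$-stabilizer is only defined for translations by elements of $\CC^n$ (and preserves $p(\FC)+\mu_\FC^n$, not $p(\FC)$ itself). You flag this as the ``delicate point,'' but it is not merely delicate — without the compactness argument over $H_\alpha^\Lambda$ and the $\Lambda$-invariance of $\st(p(\FC)+\Lambda^\sharp)$ this case is no easier than the general inductive step, and the case split is unnecessary: when $H_\alpha=V$ the paper's induction terminates automatically because $\dim_\CC A_{\alpha_1}^\CC=0$.
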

\begin{proof}
Let $r_\alpha$ be a semialgebraic type as in
Corollary~\ref{cor:acvf-semi}, namely $\rho_\alpha(\FC)=p(\FC)$.
By
Claim~\ref{claim:cl-stand-part},  $\st(r_\alpha(\FC)+\Lambda^\sharp)$ is
closed.  Hence $ \st(p(\FC)+\Lambda^\sharp)$ is closed as well.

\medskip
For the inclusion $\cl(A_\alpha^\CC +\Lambda) \supseteq \st(p(\FC)+\Lambda^\sharp)
$, let
 $\beta\in p(\FC)$. We need to show that
$\st(\beta+\Lambda^\sharp)\subseteq \cl(A_\alpha^\CC + \Lambda)$.
Notice that since $\beta$ and $\alpha$ have the same $\CL_\val$-type
over $\CC$ we have $A_\beta^\CC=A_\alpha^\CC$.

  By the definition of  $A_\beta^\CC$, there is
$\gamma\in {(A_\beta^\CC)}^\sharp$ with $\beta\in
\gamma+\mu_\FC^n$. Hence  we have
$\st(\beta+\Lambda^\sharp)\subseteq \st( {(A_\beta^\CC)}^\sharp +\Lambda^\sharp)$,
and by Claim~\ref{sec:clos-stand-part},
\[
\st(\beta+\Lambda^\sharp) \subseteq \cl(A_\beta^\CC +\Lambda)=\cl(A_\alpha^\CC +\Lambda).\]

\medskip

We are left to show  the inclusion
\[ \cl(A_\alpha^\CC +\Lambda) \subseteq \st(p(\FC)+\Lambda^\sharp). \]
Since the right side is invariant under translations by elements of $\Lambda$ and is
closed it is sufficient to prove
\[ A_\alpha^\CC \subseteq \st(p(\FC)+\Lambda^\sharp). \]

 We proceed  by induction on $\dim_\CC(A^\CC_\alpha)$, and
to simplify notation we denote  $\CC^n$ by $V$.

\medskip
\noindent\textbf{Base case: $\dim_\CC(A_\alpha^\CC)=0$.}  Then, by
Lemma~\ref{lem:Aalpha-bounded},
$\alpha$ is bounded with $A_\alpha^\CC=\st(\alpha)$,
and the proposition is
 trivial in this case.

\medskip
\noindent\textbf{Inductive step.}  Assume $\dim_\CC(A_\alpha^\CC)>0$,   hence
$\alpha$ is unbounded.

Let $H_\alpha=\Stab^\mu_\val(\alpha/\CC)$. By Theorem~\ref{thm:mu-def}(2), $\dim_\CC(H_\alpha)>0$.

Choose  a $\CC$-subspace  $V_1\subset V$,
complementary to $H_\alpha$,  i.e
\[ V=H_\alpha\oplus V_1, \]
and let  $\pi\colon V\to V_1$ be the projection of $V$ onto $V_1$ along
$H_\alpha$.

 We can
write $\alpha$ as
\[ \alpha=\alpha_0+\alpha_1 \text{ with $\alpha_0\in H_\alpha^\sharp$ and
$\alpha_1=\pi(\alpha)\in V_1^\sharp$ }.\]

By Lemma~\ref{lem:la-subspace} we have
 $A_\alpha^\CC=H_\alpha\oplus A_{\alpha_1}^\CC$, so
$\dim_\CC(A_{\alpha_1}^\CC)<\dim_\CC(A_\alpha^\CC)$.

To prove the proposition, it is sufficient to show that for any
$a\in A_{\alpha_1}^\CC$ we have
\begin{equation}
  \label{eq:2}
a+H_\alpha \subseteq \st(p(\FC)+\Lambda^\sharp).
\end{equation}

We fix
$a\in A_{\alpha_1}^\CC$.

We now apply the inductive hypothesis to $A_{\alpha_1}^\CC$ and
$\Lambda_1=H_\alpha+\Lambda$ (which clearly still $\RR$-spans $V$). We obtain
\[ a\in \st(p_1(\FC)+H_\alpha^\sharp+\Lambda^\sharp),\]
where $p_1=\tp_\val(\alpha_1/\CC)$.

Thus we have
\[
a =\st(\beta_1 +h+\lambda) \text{ for some } \beta_1\in p_1(\FC),
h\in H_\alpha^\sharp,  \lambda \in \Lambda^\sharp.
\]
By Corollary~\ref{cor:acvf-sat}, $\pi$ maps $p(\FC)$ onto $p_1(\FC)$,
hence
there is $\beta\in p(\FC)$ with
$\pi(\beta)=\beta_1$. Thus $\beta_1=\beta+h'$ for some
$h'\in H_\alpha^\sharp$ and
\[ a =\st(\beta +h''+\lambda) \text{ with } \beta \in p(\FC),
h''\in H_\alpha^\sharp,  \lambda \in \Lambda^\sharp.
\]

Let $H=H_\alpha^\Lambda$. Since $H$ has an  $\RR$-basis in $\Lambda$, there
 is a compact subset $F\subseteq H$ with $H\subseteq F+\Lambda$.

We now use the fact that $\FR_\full$ is an elementary extension of $\RR_\full$.
Since $h'' \in H^\sharp$, there is $\lambda_1\in \Lambda^\sharp$ with
$\lambda_1-h''\in F^\sharp$. Because $F$ is compact, there is $h^*\in F^\sharp\sub
H^\sharp$ with $h^*=\st(\lambda_1-h'')$. Thus
\[ a+h^*=st(\beta+\lambda_1+\lambda)\in  \st(\beta+\Lambda^\sharp),\]
and
\[ a+h^*+H_\alpha \subseteq \st(\beta+H_\alpha+\Lambda^\sharp).\]
By the definition of $\mu$-stabilizers, $\beta+H_\alpha \subseteq
p(\FC)+\mu_\FC^n$, hence
 \[ a+h^*+H_\alpha \subseteq \st(p(\FC)+\Lambda^\sharp).\]

Since  the right side is closed and
invariant under translations by elements of $\Lambda$, we conclude that
\[ a+h^*+\cl(H_\alpha+\Lambda) \subseteq \st(p(\FC)+\Lambda^\sharp).\]

By Fact~\ref{fact:kron},  $H\subseteq \cl(H_\alpha+\Lambda)$, and
since $h^*\in H$,  we have
\[ a +H \subseteq \st(p(\FC)+\Lambda^\sharp),\]
hence
\[ a +H_\alpha\subseteq \st(p(\FC)+\Lambda^\sharp).\]
This proves \eqref{eq:2} and therefore the proposition.

\end{proof}

We can now deduce the main theorem of this section.

\begin{thm}\label{thm:main-1}
Let $X\subseteq \CC^n$ be a constructible set and $\Lambda\leq  \CC^n$
be an additive
subgroup whose $\RR$-span is the whole $\CC^n$.  Then
\[ \cl(X+\Lambda)=\bigcup_{A\in \CA^\CC(X)} \cl(A  + \Lambda),
\]
and the family of $\CC$-flats $\CA^\CC(X)$ is constructible.
\end{thm}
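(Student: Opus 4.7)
The plan is to derive the displayed equality almost directly from two already-established tools: the single-orbit description in Proposition~\ref{prop:main}, which computes $\cl(A_\alpha^\CC+\Lambda)$ as a standard part of a type, and the saturation identity $\cl(X+\Lambda)=\st(X^\sharp+\Lambda^\sharp)$ from Claim~\ref{claim:cl-stand-part}. The constructibility of the family $\CA^\CC(X)$ is nothing new: it is exactly the content of Theorem~\ref{thm:as-def}, so the only real work is the set equality.

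For the inclusion $\supseteq$, I would pick an arbitrary $A\in \CA^\CC(X)$, which by definition is $A_\alpha^\CC$ for some $\alpha\in X^\sharp$. Letting $p(x)=\tp_\val(\alpha/\CC)$, Proposition~\ref{prop:main} gives
\[ \cl(A_\alpha^\CC+\Lambda)=\st(p(\FC)+\Lambda^\sharp). \]
Since $X\subseteq \CC^n$ is constructible, the set $X^\sharp\subseteq \FC^n$ is $\CL_\val$-definable over $\CC$, and $\alpha\in X^\sharp$ forces $X^\sharp\in p(x)$; hence $p(\FC)\subseteq X^\sharp$. Combining with Claim~\ref{claim:cl-stand-part},
\[ \cl(A_\alpha^\CC+\Lambda)=\st(p(\FC)+\Lambda^\sharp)\subseteq \st(X^\sharp+\Lambda^\sharp)=\cl(X+\Lambda). \]

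For the inclusion $\subseteq$, I would start from a point $z\in \cl(X+\Lambda)$. By Claim~\ref{claim:cl-stand-part} we may write $z=\st(\alpha+\lambda)$ for some $\alpha\in X^\sharp$ and $\lambda\in \Lambda^\sharp$. Setting $p(x)=\tp_\val(\alpha/\CC)$ and $A=A_\alpha^\CC$, the flat $A$ lies in $\CA^\CC(X)$ by the very definition of this family. Since $\alpha\in p(\FC)$, a second appeal to Proposition~\ref{prop:main} yields
\[ z=\st(\alpha+\lambda)\in \st(p(\FC)+\Lambda^\sharp)=\cl(A_\alpha^\CC+\Lambda), \]
which places $z$ in the right-hand union.

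I do not anticipate a substantive obstacle at this stage: all of the delicate work (existence of $A_\alpha^\CC$, the $\mu$-stabilizer machinery, the Kronecker-type fact, and the reduction via complementary subspaces) has already been carried out inside Proposition~\ref{prop:main}. The one subtlety worth flagging is that the bridge between the pointwise statement (about a single $\alpha$) and the global statement (about $X$) hinges on constructibility of $X$, which guarantees $X^\sharp\in p(x)$ and therefore $p(\FC)\subseteq X^\sharp$; without this, the inclusion $\supseteq$ would fail.
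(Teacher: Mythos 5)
Your proof is correct and follows essentially the same route as the paper's: both reduce via Claim~\ref{claim:cl-stand-part} to a union of $\st(p(\FC)+\Lambda^\sharp)$ over $\CL_\val$-types realized in $X^\sharp$, then apply Proposition~\ref{prop:main} to each type and cite Theorem~\ref{thm:as-def} for constructibility. You are a bit more explicit than the paper about why $p(\FC)\subseteq X^\sharp$ (constructibility of $X$ makes $X^\sharp$ an element of $p$), which the paper leaves implicit in the chain of equalities.
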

\begin{proof}
By Claim~\ref{claim:cl-stand-part} we have
\[
  \cl(X+\Lambda)=\st(X^\sharp + \Lambda^\sharp) =
\bigcup_{\alpha\in X^\sharp} \st(\alpha+\Lambda^\sharp) =\bigcup_{ p(x)\in
S^\CC_\val(X)}\st(p(\FC)+\Lambda^\sharp),
\]
where $ S^\CC_\val(X)=\{ \tp_\val(\alpha/\CC) \colon \alpha\in
X^\sharp\}$.

By Proposition~\ref{prop:main}, the union on the right equals
\[\bigcup_{A\in \CA^\CC(X)} \cl(A+\Lambda),\]
thus proving the required  equality.

By Theorem~\ref{thm:as-def}, the family $\CA^\CC(X)$ is constructible.
\end{proof}

\section{Completing the proof in the algebraic case}
\label{sec:second-reduction}

Our next goal is to show that in Theorem~\ref{thm:main-1} one can replace
the union $\bigcup_{A\in \CA^\CC(X)} \cl(A+\Lambda)$ by a finite union of sets of the
form $C_i+V_i^\Lambda+\Lambda$, where each $C_i$ is constructible and each $V_i$ 
a $\CC$-linear subspace of $\CC^n$.

\subsection{On families of affine subspaces}
\label{sec:famil-affine-subsp}

By $\Graff_k(\CC^n)$ we denote the Grassmannian
variety of all affine $k$-dimensional $\CC$-subspaces of $\CC^n$.
Each $\Graff_k(\CC^n)$ is a quasi-projective subvariety of some
 $\PP^{\hspace{0.05 em}l_k}(\CC)$, and  we often identify $A\in
\Graff_k(\CC^n)$ with the corresponding $\CC$-flat $A\subseteq
\CC^n$.

The Euclidean  topology on  $\Graff_k(\CC^n)$ induced by $\PP^{\hspace{0.05
em}l_k}(\CC)$ coincides with the topology induced by the distance function on the
set of all $\CC$-flats in $\CC^n$ that is defined as follows: Let  $A_1,A_2
\subseteq \CC^n$ be two $\CC$-flat subsets.  Let $\xi_1\in A_1$ be the point of
$A_1$ closest to the origin with respect to the Euclidean norm on $\CC^n$, and
similarly we choose $\xi_2\in A_2$.  Let $S_1=\{ v\in A_1 \colon \| v-\xi_1\|=1\}$
and $S_2= \{ v\in A_2 \colon \| v-\xi_2\|=1\}$.  The distance between $A_1$ and
$A_2$ is now defined to be the Hausdorff distance between $S_1$ and $S_2$.

For a $\CC$-flat $A\subseteq \CC^n$ we denote by $L(A)$ the
linear part of $A$, i.e.\ the linear subspace of $\CC^n$ such that $A$
is a translate of $L$.

\medskip

For a subset $T\subseteq \Graff_k(\CC^n)$ we denote by $\CCL(T)$ the $\CC$-linear span of
$\bigcup_{A\in T} L(A)$ in $\CC^n$. Slightly abusing notation, for   a
$\CC$-subspace $W\subseteq \CC^n$ of arbitrary  dimension we  let
\[ L^{-1}[W]=\{ A\in \Graff_k(\CC^n) \colon L(A) \text{ is a subspace of }
W\}.\]  It is not hard to see that $L^{-1}[W]$ is a Zariski closed subset of
$\Graff_k(\CC^n)$.

For  a $\CC$-flat $A\subseteq \CC^n$ and an additive subgroup $\Lambda\leq \CC^n$
whose $\RR$-span is the whole $\CC^n$ we denote by $A^\Lambda$ the $\CC$-flat
$a+{L(A)}^\Lambda$, where $a\in A$. Obviously the definition of $A^\Lambda$ does not
depend on the choice of $a$. Notice that if $\Lambda$ is
a lattice in $\CC^n$ then  $a+{L(A)}^\Lambda$ is the connected
component of $\cl(A+\Lambda)$ containing $a$.  

\begin{rem}
Let $A\subseteq \CC^n$ be a $\CC$-flat and $V\subseteq \CC^n$ a $\CC$-subspace with
$L(A)\subseteq V$. Then $A+V$ is also a $\CC$-flat with $L(A+V)=V$.  Also if
$A=a+L(A)$ then $A+V=a+V$.
\end{rem}

Recall that a constructible subset $T\subseteq \Graff_k(\CC^n)$ is
called  \emph{irreducible} if its Zariski closure is an irreducible
subvariety of  $\Graff_k(\CC^n)$

\begin{prop}\label{prop:neat-dense}
Let $T\subseteq \Graff_k(\CC^n)$ be an irreducible constructible set and
$V=\CCL(T)$.
 Let $\Lambda< \CC^n$ be a countable additive subgroup whose
 $\RR$-span is the whole $\CC^n$.
  \begin{enumerate}
  \item The set
  $\{A\in T \colon  {L(A)}^\Lambda =V^\Lambda\}$
  is topologically dense in $T$ with respect to the Euclidean topology on $\Graff_k(\CC^n)$.
\item  The set
$\bigcup_{A\in T} (A+\Lambda)$ is topologically dense in $\bigcup_{A\in
  T} (A+V+\Lambda)$ with respect to the Euclidean topology on
$\CC^n$.
\end{enumerate}
\end{prop}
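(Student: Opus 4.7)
The plan is to prove (1) by a Baire category argument on $T$, and to derive (2) from (1) using Fact~\ref{fact:kron}.

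For (1), first note that $V^\Lambda$ is a $\Lambda$-defined $\RR$-subspace containing every $L(A)$ for $A\in T$, so by minimality $L(A)^\Lambda\subseteq V^\Lambda$ for each $A\in T$. Consequently, $\{A\in T : L(A)^\Lambda\neq V^\Lambda\}$ is the union, over all proper $\Lambda$-defined $\RR$-subspaces $W\subsetneq V^\Lambda$, of the sets
\[
T_W \;:=\; \{A\in T : L(A)\subseteq W\}.
\]
Since $L(A)$ is a $\CC$-subspace, $L(A)\subseteq W$ iff $L(A)\subseteq W\cap iW$, where $W\cap iW$ is the largest $\CC$-subspace contained in $W$. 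Thus $T_W = T\cap L^{-1}[W\cap iW]$ is Zariski-closed in $T$. If we had $T_W=T$ for some such $W$, then every $L(A)\subseteq W\cap iW$, hence $V=\CCL(T)\subseteq W\cap iW\subseteq W$, forcing $V^\Lambda\subseteq W$, which contradicts $W\subsetneq V^\Lambda$.

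Therefore each $T_W$ is a proper Zariski-closed subset of the irreducible constructible set $T$. By the standard fact that a proper Zariski-closed subset of an irreducible constructible set has empty Euclidean interior (apply it via the Zariski-open dense subset of $\bar T$ contained in $T$), each $T_W$ is nowhere dense in $T$. Since $\Lambda$ is countable there are only countably many $\Lambda$-defined $\RR$-subspaces $W\subseteq V^\Lambda$, and $T$ is Baire in the Euclidean topology (again via a Zariski-open dense locally closed, hence locally compact Hausdorff, subset of $\bar T$ sitting inside $T$). The Baire category theorem now yields density of the complement $\{A\in T : L(A)^\Lambda = V^\Lambda\}$ in $T$, proving (1).

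For (2), the inclusion $\bigcup_{A\in T}(A+\Lambda)\subseteq \bigcup_{A\in T}(A+V+\Lambda)$ is trivial, since $0\in V$. Conversely, fix $z=a+v+\lambda$ with $a\in A\in T$, $v\in V$, $\lambda\in\Lambda$, and let $\varepsilon>0$. By (1) and the definition of the Euclidean topology on $\Graff_k(\CC^n)$, we may choose $A'\in T$ with $L(A')^\Lambda=V^\Lambda$ so close to $A$ that some $a'\in A'$ satisfies $\|a'-a\|<\varepsilon$. Applying Fact~\ref{fact:kron} to $L(A')$ we obtain
\[
V+\Lambda \;\subseteq\; V^\Lambda+\Lambda \;=\; L(A')^\Lambda+\Lambda \;\subseteq\; \cl\bigl(L(A')+\Lambda\bigr),
\]
so there exist $\ell'\in L(A')$ and $\lambda'\in\Lambda$ with $\|(\ell'+\lambda')-(v+\lambda)\|<\varepsilon$. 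Then $a'+\ell'+\lambda'\in A'+\Lambda$ lies within $2\varepsilon$ of $z$, giving density. The main obstacle is the Baire step in (1): one must verify that $T$ is genuinely Baire (despite being only constructible) and that each $T_W$ is nowhere dense, both of which reduce to the standard passage to a Zariski-open dense subset of $\bar T$ lying inside $T$.
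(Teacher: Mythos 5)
Your proof is correct and follows essentially the same route as the paper. Part (1) is the paper's argument verbatim: reduce by Baire category to showing each $T_W=T\cap L^{-1}[W\cap iW]$ is nowhere dense, and conclude from irreducibility of $T$; you are just slightly more explicit about why a constructible irreducible set is a Baire space. For part (2) you use the same three ingredients---Fact~\ref{fact:kron}, part (1), and continuity of nearby flats in $\Graff_k(\CC^n)$---the only difference being that the paper packages the continuity step through the map $A\mapsto\xi_A$ (closest point to the origin) and reduces to density of $\bigcup_A(\xi_A+L(A)^\Lambda+\Lambda)$ in $\bigcup_A(\xi_A+V^\Lambda+\Lambda)$, whereas you run the pointwise $\varepsilon$-argument directly; the two presentations are interchangeable.
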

\begin{proof}
(1).  Since $\Lambda$ is countable, there are at most  countably many
$\RR$-subspaces of $\CC^n$ defined over $\Lambda$, hence  by
the  Baire category  theorem
  it is sufficient to show that for any proper $\RR$-subspace $W \lneq
  V^\Lambda$ defined over $\Lambda$ the set
$T_W=\{ A\in T \colon L(A) \leq W\}$  is nowhere
  dense in $T$.
Let $W\lneq V^\Lambda$ be a proper $\RR$-subspace of $V^\Lambda$
defined over $\Lambda$.   Let $W'=W\cap iW$ be the largest
$\CC$-subspace $\CC^n$ contained in   $W$.   For $A\in T$  the space  $L(A)$ is a
$\CC$-subspace of $\CC^n$, hence $L(A) \leq W$ if and only if
$L(A)\leq W'$.  Thus $T_W=L^{-1}[W']$.

Since $T$ is
  irreducible and $L^{-1}[W']$ is Zariski closed in $\Graff_k(\CC^n)$,
the set $T_W$ is nowhere dense in $T$
  if and only if  $T \not\subseteq L^{-1}[W']$.  Assume
  $T \subseteq L^{-1}[W']$.  Then $V=\CC
  L(T) \subseteq W'\subseteq W$, and since $W$ is defined over $\Lambda$ we would
  have $V^\Lambda \subseteq W$, contradicting the assumption on $W$.

\medskip
(2).  For $A\in T$, let $\xi_A\in A$ be  the point on $A$ closest
to the origin with respect to the Euclidean metric on
$\CC^n$.
It is not hard to see that the map
$A\mapsto \xi_A$, as a map from $T$ to $\CC^n$,
is continuous with respect to  Euclidean topologies.

By Fact~\ref{fact:kron}, for any $\CC$-subspace $W\subseteq \CC^n$ and $\xi\in \CC^n$
we have $\xi+W^\Lambda +\Lambda \subseteq
cl(\xi+W+\Lambda)$. Therefore, writing each $A\in T$ as
$A=\xi_A+L(A)$, we have
\[ \bigcup_{A\in T}(\xi_A+{L(A)}^\Lambda+\Lambda)\subseteq \cl\left( \bigcup_{A\in T} (A+\Lambda)\right).\]
Thus it is sufficient to show that the set $\bigcup_{A\in
  T}(\xi_A+{L(A)}^\Lambda+\Lambda)$ is dense in $\bigcup_{A\in
  T}(\xi_A+V^\Lambda+\Lambda)$.

The latter follows from clause (1) and the continuity of the map
$A\mapsto \xi_A$.
 \end{proof}

\subsection{Proof of the main theorem in the algebraic case}
\label{sec:proof-main-theorem}

We can now prove  our main result in the algebraic case.

For a $\CC$-subspace $W\subseteq \CC^n$, we denote by $W^\perp$ its orthogonal
complement with respect to the standard inner product on $\CC^n$.

\begin{thm}\label{thm:main0} Let $X\subseteq \CC^n$ be  an algebraic subvariety.
There are $\CC$-subspaces  $V_1,\dotsc,V_m \subseteq
 \CC^n$ of positive dimension  and algebraic subvarieties
$C_1\subseteq V_1^\perp, \dotsc,C_m \subseteq V_m^\perp$
such that for any lattice
 $\Lambda<\CC^n$ we have
 \begin{equation}
   \label{eq:1}
    \cl(X+\Lambda) =(x+\Lambda)
    \cup\bigcup_{i=1}^m (C_i+V_i^\Lambda+\Lambda).
 \end{equation}

In addition,
\begin{enumerate}[(i)]
\item For each $i=1,\dotsc, m$, we have $\dim C_i<\dim X$. \item For each $V_i$ that
is maximal among $V_1,\dotsc, V_m$, the set $C_i\sub \RR^n$ is finite.
\end{enumerate}
\end{thm}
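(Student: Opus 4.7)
The plan is to combine Theorem~\ref{thm:main-1}, which already expresses $\cl(X+\Lambda) = \bigcup_{A \in \CA^\CC(X)} \cl(A+\Lambda)$, with the density statements of Proposition~\ref{prop:neat-dense} to collapse this (typically infinite) union into a finite one of the prescribed form. The starting observation is that $\CA^\CC(X)$ is constructible by Theorem~\ref{thm:as-def}; stratifying first by the dimension $k$ of the flat and then by the irreducible components of its Zariski closure in $\Graff_k(\CC^n)$ (intersected back with the constructible family) yields finitely many irreducible constructible subfamilies $T_0, T_1, \dotsc, T_m$ with $T_j \subseteq \Graff_{k_j}(\CC^n)$.

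The zero-dimensional stratum $T_0$ contributes the term $X + \Lambda$: by Lemma~\ref{lem:Aalpha-bounded} its elements are the points $\st(\alpha) \in \CC^n$ for bounded $\alpha \in X^\sharp$, and since $X$ is algebraic and hence Euclidean closed, this set of points equals $X$. For each positive-dimensional irreducible family $T_j$, put $V_j := \CCL(T_j)$; this is a $\CC$-subspace of positive dimension since $k_j \geq 1$. Proposition~\ref{prop:neat-dense}(2) gives
\[ \cl\Bigl(\bigcup_{A \in T_j}(A+\Lambda)\Bigr) = \cl\Bigl(\bigcup_{A \in T_j}(A+V_j+\Lambda)\Bigr). \]
As $L(A) \subseteq V_j$ for every $A \in T_j$, the set $A + V_j$ is a single translate of $V_j$, so letting $C_j'$ be the image of $\bigcup_{A \in T_j} A$ under the orthogonal projection $\CC^n \to V_j^\perp$, one has $\bigcup_{A \in T_j}(A + V_j) = C_j' + V_j$. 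By Chevalley's theorem applied to the incidence variety $\{(x,A) : x \in A\} \subseteq \CC^n \times T_j$, the set $C_j'$ is constructible in $V_j^\perp$; I take $C_j$ to be its Zariski closure, which is an algebraic subvariety. Combining Fact~\ref{fact:kron} with the closedness of $V_j^\Lambda + \Lambda$ (for a lattice $\Lambda$) one obtains $\cl(C_j' + V_j + \Lambda) = C_j + V_j^\Lambda + \Lambda$; the nontrivial inclusion $\subseteq$ rests on closedness of the right-hand side, seen via the image of $C_j$ in the quotient $\CC^n/(V_j^\Lambda + \Lambda)$, and this closedness will be clean once (ii) is established.

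For the dimension bounds, a fibration argument applied to the incidence variety shows $\dim_\CC \bigcup_{A \in T_j} A \leq \dim_\CC X$ (since the flats in $T_j$ arise as asymptotes of points of $X^\sharp$), and the generic fiber of the projection to $V_j^\perp$ has dimension at least $k_j$, yielding $\dim_\CC C_j \leq \dim_\CC X - k_j < \dim_\CC X$, which is (i). For (ii), when $V_j$ is maximal among the $V_i$, Proposition~\ref{prop:neat-dense}(1) forces a generic $A \in T_j$ to satisfy $L(A)^\Lambda = V_j^\Lambda$; combined with irreducibility this should pin $\dim T_j + k_j = \dim_\CC V_j$, making $C_j'$ zero-dimensional, hence finite.

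The main obstacle is clause (ii): one must rule out the possibility that a family $T_j$ with strictly smaller generic flat still generates a maximal $V_j$ and yet a positive-dimensional $C_j$. The argument has to combine the irreducibility of $T_j$, the full force of the constructibility of $\CA^\CC(X)$, and Fact~\ref{fact:kron} describing how $V+\Lambda$ grows to $V^\Lambda + \Lambda$, to show that any such pathology would allow one to enlarge $V_j$ and contradict maximality. A related subtlety is that in the non-maximal cases the right-hand side of (6.1) need not be closed on a set-by-set basis, so the proof must carefully verify that the overall finite union is closed and really equals $\cl(X+\Lambda)$.
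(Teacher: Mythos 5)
Your reduction of the main equality \eqref{eq:1} to a finite union is essentially the argument in the paper: decompose $\CA^\CC(X)\setminus X$ into irreducible constructible subfamilies $T_1,\dotsc,T_m$, set $V_j=\CCL(T_j)$, invoke Proposition~\ref{prop:neat-dense}(2) to replace $A+\Lambda$ by $A+V_j+\Lambda$, and project onto $V_j^\perp$. (One small simplification the paper uses and you could adopt: rather than closing up the union $C_j'+V_j+\Lambda$ all at once and worrying about whether the result is closed, observe that for a lattice each individual $\cl(c_A+V_j+\Lambda)$ equals $c_A+V_j^\Lambda+\Lambda$ by Fact~\ref{fact:kron}, and the union over $A\in T_j$ is already captured inside the known closed set $\cl(X+\Lambda)$ via Theorem~\ref{thm:main-1}. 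This sidesteps the closedness issue you flag at the end of your second paragraph.)

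The proof of clause (i), however, has a genuine error. Your fibration argument rests on the intermediate claim $\dim_\CC\bigl(\bigcup_{A\in T_j}A\bigr)\le\dim_\CC X$, and this is false in general: the asymptotic flats are not contained in $X$ and can have much larger dimension. For example, with $X=\{y=x^2\}\subseteq\CC^2$ one has $\CA^\CC(X)=X\cup\{\CC^2\}$, so $T_1=\{\CC^2\}$, $k_1=2$, and $\bigcup_{A\in T_1}A=\CC^2$ has dimension $2>1=\dim_\CC X$; your inequality would then give $\dim_\CC C_1\le\dim_\CC X-k_1=-1$, which is absurd (in fact $C_1$ is a point). The paper instead changes coordinates so that $V=\CC^k$, $V^\perp=\CC^l$, passes to the partial projective compactification $\CC^l\times\PP^k(\CC)$, and shows that every $c\in C$ lies in the projection to $\CC^l$ of $\overline{X}^Z\setminus X$, where $\overline{X}^Z$ is the Zariski closure of $X$ there. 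Concretely: for $\bar c\in C$ pick $\bar\alpha=(\bar c',\bar\alpha')\in X^\sharp$ unbounded with $\st(\bar c')=\bar c$ and $\bar\alpha'\notin\CO_\FC^k$, rescale $\bar\alpha'$ by a suitable $\varepsilon\in\mu_\FC$ so that $\st(\varepsilon\bar\alpha')=\bar a\ne 0$, and check via homogeneity that $(\bar c,\,0:\bar a)\in\overline{X}^Z\setminus X$. Since $\dim_\CC(\overline{X}^Z\setminus X)<\dim_\CC X$, the bound follows. This is a different mechanism from what you propose, and your version cannot be repaired without abandoning the $\dim(\bigcup A)\le\dim X$ step.

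For clause (ii) you candidly acknowledge that your sketch is incomplete, and indeed it is: the putative identity ``$\dim T_j+k_j=\dim_\CC V_j$'' is not something irreducibility or Proposition~\ref{prop:neat-dense}(1) delivers, and in any case an argument routed through $L(A)^\Lambda=V_j^\Lambda$ for one particular $\Lambda$ cannot establish a fact about $C_j$ that must hold uniformly over all lattices. The paper proceeds by contradiction and compactness: if $C$ is infinite then (being constructible) it is unbounded; assuming no $\alpha^*\in X^\sharp$ has $L(A_{\alpha^*}^\CC)\supsetneq V$, saturation of $\FR$ applied to the pro-semialgebraic sets $\{u:u\notin V^\sharp+\CO_\FC^n\}$ and $\{u:u\notin W^\sharp+\CO_\FC^n\}$ (over $\CC$-subspaces $W$ not containing $V$) yields finitely many $W_1,\dotsc,W_l$ and a ball $B^c(0,R)$ covering the relevant part of $X^\sharp$. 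This forces every $A\in T$ with $c_A\notin B^c(0,R)$ to satisfy $L(A)\subseteq W_j$ for some $j$, i.e.\ to lie in one of the proper Zariski-closed subsets $T\cap L^{-1}[W_j]$ of the irreducible $T$; but the set of such $A$ is a nonempty Euclidean-open subset of $T$ (via the continuous surjection $A\mapsto c_A$), giving a dimension contradiction. Hence $V$ is not maximal. You will need something of this shape; the key missing ingredient in your sketch is the compactness/saturation step that converts ``$C$ unbounded'' into the existence of an $\alpha^*$ whose asymptotic flat genuinely enlarges $V$.
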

\begin{proof}
Notice that it is sufficient to find $C_i$'s  as above that are
constructible, and then
replace  each $C_i$ with its topological closure  if needed.

By Theorem~\ref{thm:main-1} we have
  \[ \cl(X+\Lambda)=\bigcup_{A\in \CA^\CC(X)} \cl(A+\Lambda),\]
and
we view $\CA^\CC(X)$ as a constructible subset of
$\Graff_0(\CC^n)\cup\dotsb \cup\Graff_n(\CC^n)$.

Since $X$ is a closed set, we have $\st(X^\sharp)=X$, and using   Lemma~\ref{lem:Aalpha-bounded}
we identify $\CA^\CC(X)\cap \Graff_0(\CC^n)$  with $X$.

Since every constructible subset  of $\Graff_k(\CC^n)$ is a finite union of
irreducible constructible sets, we can write $\CA^\CC(X)$ as
\begin{equation} \label{eq:Ti} \CA^\CC(X)=X\cup T_1\cup\dotsb \cup T_m,
\end{equation}
 where each $T_i$ is an irreducible  constructible subset  of
some $\Graff_{k_i}(\CC^n)$ with $k_i>0$.

Thus we have
\[ \cl(X+\Lambda)=(X+\Lambda)\cup\left(\bigcup_{i=1}^m
\bigcup_{A\in T_i} \cl(A+\Lambda)\right). \]

For each $i=1,\dotsc,m$, let  $V_i=\CCL(T_i)$. Obviously each $V_i$
has positive dimension.
 By Proposition~\ref{prop:neat-dense}(2),
every
closed set containing $\bigcup_{A\in T_i}(A+\Lambda)$ must also
contain $\bigcup_{A\in T_i}(A+V_i+\Lambda)$, hence
\[ \cl(X+\Lambda)=(X+\Lambda)\cup\left(\bigcup_{i=1}^m
\bigcup_{A\in T_i} \cl(A+V_i+\Lambda)\right). \]

Fix $i=\{1,\dotsc,m\}$. Since $L(A)\subseteq V_i$ for $A\in T_i$, we have that for
each $A\in T_i$  the intersection $(A+V_i) \cap V_i^\perp$ is a singleton that we
denote by $c_A$.  Obiously $A+V_i= c_A+V_i$. Let $C_i=\{ c_A \colon A\in T_i\}$. It is not
hard to see that $C_i$ is a constructible subset of $V_i^\perp$.

We have
\begin{multline*}
\cl(X+\Lambda)= (X+\Lambda) \cup\left(\bigcup_{i=1}^m
\bigcup_{A\in T_i} \cl(c_A+V_i+\Lambda)\right) =\\
(X+\Lambda)
\cup\left(\bigcup_{i=1}^m
\bigcup_{c\in C_i} (c+V_i^\Lambda+\Lambda)\right)= \\
=(X+\Lambda)\cup\bigcup_{i=1}^m(C_i+V_i^\Lambda+\Lambda).
\end{multline*}

This  finishes the proof of the main part of Theorem~\ref{thm:main0}.

\medskip

\noindent \emph{Proof of Clause (i).}  Although, considering $X$ as a semialgebraic
set, Clause (i) can be derived from the corresponding clause in the o-minimal case,
we also provide an algebraic argument.

 Let $V$ be one of $V_i's$, $i=1,\dotsc,m,$ and $C\subseteq
V^\perp$ the corresponding constructible subset.  We need to show that
$\dim_\CC(C)<\dim_\CC(X)$.

By our choice of $V$ and $C$,  for each $c\in C$ there is $\alpha
\in X^\sharp \setminus \CO_\FC^n$ with
$A_\alpha^\CC\subseteq c+V$, equivalently  $\alpha\in c+V^\sharp
+\mu_\FC^n$.

Changing  coordinates, we may assume that $V=\CC^k$, $V^\perp=\CC^l$
with $k+l=n$ and we write $\CC^n$ as $\CC^l\times \CC^k$.
 Let $\overline{X}^Z$ be the Zariski closure of $X$ in $\CC^l\times
 \PP^k(\CC)$
under the embedding  $\CC^l\times \CC^k \hookrightarrow
\CC^l\times \PP^k(\CC)$.

Since $\dim_\CC(\overline{X}^Z\setminus X) < \dim_\CC(X)$,  it is sufficient
to show that  $C$ is contained in the projection of $\overline{X}^Z\setminus X$ into $\CC^l$.

Let $\bar c=(c_1,\dotsc,c_l)\in C$.
Choose $\bar \alpha\in X^\sharp \setminus \CO^n_\FC$ with
$\bar \alpha\in \bar c+{(\CC^k)}^\sharp +\mu^n_\FC$.

We can write   $\bar \alpha$ as
$\bar \alpha=(\bar c', \bar \alpha')$, with
$\bar c'= \bar c +\mu_\FC^l$   and
$\bar \alpha' \in \FC^k+\mu_\FC^k$.
Notice that we must have  $\bar \alpha'\notin \CO^k_\FC$.

Choose $\varepsilon\in \FC$ so that $\varepsilon\bar \alpha' \in
\CO_\FC^k \setminus \mu_\FC^k$, (for example we can take
$\varepsilon=\frac{1}{\alpha_i'}$ where $\alpha_i' $ is a component of
$\bar\alpha'$ with smallest valuation).
Since $\bar \alpha'\not \in \CO_\FC^k$ we must have $\varepsilon\in
\mu_\FC$. Let $\bar a= (a_1,\dotsc,a_k)=\st(\varepsilon\bar \alpha')$.

We use $\bar x=(x_1,\dotsc,x_l)$ for coordinates in $\CC^l$ and
$[y_0:y_1:\dotsc:y_k]$  for  homogeneous coordinates in $\PP^k(\CC)$,
hence $\CC^k$ is identified with points represented by
homogeneous coordinates  $[1:y_1:\dotsc:y_k]$.

We claim that  $(c_1,\dotsc, c_l; 0:a_1:\dotsc : a_k)\in \overline{X}^Z$, and
since it is not in  $\CC^l\times \CC^k$ we would be done.

To show  that $(c_1,\dotsc, c_l; 0:a_1:\dotsc : a_k)\in
\overline{X}^Z$ we need to check that
$p(\bar c,0,\bar a)=0$
for every polynomial $p(x_1,\dotsc,x_l,y_0,\dotsc,y_k)\in\CC[\bar x,
\bar y]$ that is homogeneous in $\bar y$   and with
$ p(x_1,\dotsc,x_l, 1, y_1\dotsc,y_k)$  vanishing on $X$.

Let $p(x_1,\dotsc,x_l,y_0,\dotsc,y_k)$ be such  polynomial.
Since $\bar\alpha\in X^\sharp$ we have $p(\bar c',1,\bar \alpha')=0$.  Since $p$
is homogeneous in $\bar y$, we have $p(\bar c', \varepsilon,
\varepsilon\bar \alpha')=\varepsilon^s p(\bar c', 1, \bar \alpha')$ for
some $s\in \NN$, hence $p(\bar c', \varepsilon, \varepsilon\bar \alpha')=0$.

Since $p$ is a polynomial over $\CC$, $\bar c=\st(\bar c')$, $\varepsilon\in
\mu_\FC$, and $\bar a=\st( \varepsilon\bar \alpha')$, we have  $p(\bar c, 0, \bar
a)=0$, i.e.\ what we need. That finishes the proof of clause (i).

\end{proof}

\noindent \emph{Proof of Clause (ii).}  Let $T\subseteq \Graff_{k_i}(\CC)$ be one of
the $T_i$'s in (\ref{eq:Ti}), and let  $V$ and $C$ be the corresponding $V_i $ and
$C_i$. Thus $T\sub \CA^\CC(X)$ is an irreducible constructible set,
$V=\CCL(T)$ and
\[ C=\{ a\in V^\perp \colon
A\subseteq a+V \text{ for some } A\in T\}.\]

 Assume $C$ is
infinite.  Since $C$ is constructible it is unbounded with respect to the Euclidean
metric on $\CC^n$.  We will show that there is   $\alpha^*\in X^\sharp$  with
$L(A_{\alpha^*}^\CC)$ properly containing $V$. This would imply that
$A_{\alpha*}^\CC\in T_j$ for some $j=1,\dotsc,m$, and $L(A_{\alpha^*}^\CC)\subseteq
V_j$. Therefore such $V$ can not be maximal among the $V_i's$

\medskip

In order to find such an $\alpha^*$ we  use the following observation which can be
deduced from the definition of asymptotic  $\CC$-flats: for a $\CC$-subspace
$W\subseteq  \CC^n$ and $\alpha\in \FC^n$ we have
\[ L(A_\alpha^\CC)\subseteq  W \text{  if and only if  }\alpha\in
W^\sharp+\CO_\FC^n. \]

\medskip

Let $\Sigma$ be the collection of all  $\CC$-subspaces  $W \subseteq \CC^n$ with
$V\not\subseteq W$.

By the above observation,  to prove the proposition, we need  to show  that there is
$\alpha^*\in X^\sharp$ with $\alpha^*\not \in (V^\sharp+\CO_\FC^n)$ (hence
$L(A^\CC_\alpha)\not\subseteq V$), and also $\alpha^*\not \in (W^\sharp+\CO_\FC^n)$, for
any $W\in \Sigma$ (hence $V\subseteq L(A^\CC_\alpha)$).

Assume no such $\alpha^*$ as above exists. Then the intersection
\[
X^\sharp\cap \{ u\in  \FC^n \colon u\notin (V^\sharp+\CO_\FC^n) \} \cap\bigcap_{W\in
\Sigma} \{ u\in  \FC^n  \colon u \notin (W^\sharp+\CO_\FC^n) \}
\]
would be  empty.

Notice that every set in the above intersection is  pro-semialgebraic over $\RR$.

By the saturation of $\FR$, it follows then that there are
$W_1,\dotsc,W_l\in\Sigma$ and $R\in \RR$  such that
\[ X^\sharp \subseteq
( V^\sharp +{B^c(0,R)}^\sharp) \cup\bigcup_{i=1}^l ( W_i^\sharp +{B^c(0,R)}^\sharp),
 \]
where $B^c(0,R)$ is a closed ball in $\CC^n$ of radius $R$ centered at the origin.

It follows then that for every $A\in \CA^\CC(X)$
\begin{enumerate}
\item either $A\subseteq V+B^c(0,R)$; \item  or  $A \subseteq W_i+B^c(0,R)$ for some
$i=1,\dotsc,l$.
\end{enumerate}
Notice that in the second case we have $L(A)\subseteq W_i$.

For $j=1,\dotsc, l,$ let $Z_j=T \cap L^{-1}[W_j]$. Since $\CCL (T)=V$,
$V\not\subseteq  W_j$, and $T$ is irreducible we have $\dim_\CC Z_j<\dim_\CC T$.

The function  $h\colon T\to C$ that assigns to each $A$ the unique $c_A\in C$ with
$A\subseteq c_A+V$ is continuous with respect to the Euclidean topologies and
surjective. Since $C$ is unbounded,
 the set $h^{-1}(C\smallsetminus B^c(0,R))$ is 
a non-empty  open subset in the Euclidean topology of  $T$. By (1) and
(2)  it is covered by subsets  $Z_1,\dotsc,Z_l$ of  smaller
dimension. A contradiction. 

That finishes the proof of  Clause
(ii) and with it the proof of the main theorem in the algebraic
case.

\section{The o-minimal result}
\label{sec:o-minimal-result}

 In this section we prove the Main Theorem in the o-minimal case.
 The proof follows closely that of Theorem~\ref{thm:main0} so we shall be brief.

\textbf{In this section ``definable''
  means $\CL_\om$-definable in
structures $\RR_{\om}$ or $\FR_{\om}$. We  use $\dim_\RR$ to
denote the o-minimal dimension of definable sets}.

\subsection{Asymptotic $\RR$-flats}

Similarly to the algebraic case, for $\alpha\in \FR^n$ we let $A^\RR_{\alpha}$ be the
smallest affine $\RR$-subspace $A\sub \RR^n$ such that $\alpha\in A+\mu^n_{\FR}$.
The proof of its existence is identical to the proof in the complex case. We call
$A^\RR_{\alpha}$ \emph{the asymptotic $\RR$-flat of $\alpha$} or just \emph{the
$\RR$-flat of $\alpha$}. For $X\sub \RR^n$  definable in $\RR_{\om}$, we
define
\[\mathcal A^\RR(X)=\{A^\RR_{\alpha}:\alpha\in X^\#\}.\]

We can now prove the analogue of Theorem~\ref{thm:as-def}, using the theory of tame
pairs.
\begin{thm}\label{thm:asR-def} Let $X\sub \RR^n$ be a set definable  in $\RR_{\om}$. Then the family
of affine space $\mathcal A^\RR(X)$ is also definable in $\RR_{\om}$.
\end{thm}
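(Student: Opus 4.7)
The plan is to mimic the proof of Theorem~\ref{thm:as-def} in the algebraic case, replacing the role of Proposition~\ref{prop:ACVF} (stable embedding of the residue field in an algebraically closed valued field) with the corresponding result from the theory of tame pairs of o-minimal structures (equivalently, $T$-convex expansions) due to van den Dries and Lewenberg. Since $\RR_{\om}\prec \FR_{\om}$, the convex hull $\CO_{\FR}$ of $\RR$ in $\FR$ makes the pair $(\FR_{\om}, \CO_{\FR})$ into a well-behaved tame expansion. The key consequence I will invoke is that $\RR$ is stably embedded in $(\FR_{\om}, \CO_{\FR})$, and the structure induced on $\RR^N$ from sets definable in $(\FR_{\om},\CO_{\FR})$ with parameters from $\RR$ is exactly $\RR_{\om}$. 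The same holds for definable families.

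Next, I would parametrize affine $\RR$-subspaces in a uniformly $\CL_\om$-definable way. For each $k\in\{0,\ldots,n\}$, represent $A\in\Graff_k(\RR^n)$ by a pair $(M,r)$ with $M\in\RR^{(n-k)\times n}$ of rank $n{-}k$ and $r\in\RR^{n-k}$, so $A=\{x\in\RR^n:Mx=r\}$. The exact analogue of Claim~\ref{claim:lin-alg-cont} holds in the o-minimal setting with the same proof (choose a subspace complementary to $\ker M$ and use that $M$ restricts to a linear isomorphism onto $\RR^{n-k}$): for $\alpha\in\FR^n$,
\[
\alpha\in A+\mu^n_{\FR}\ \iff\ M\alpha\in r+\mu^{n-k}_{\FR}.
\]
In particular, $A^\RR_\alpha$ is well-defined via the analogue of Proposition~\ref{prop:as-inter}. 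The condition ``$A^\RR_\alpha=\{x:Mx=r\}$'' can then be expressed as $M\alpha-r\in\mu^{n-k}_{\FR}$ together with the minimality requirement that for every row vector $M_1\in\RR^{1\times n}$ not in the row span of $M$ and every $r_1\in\RR$ with $M_1\alpha-r_1\in\mu_\FR$, the affine subspace $\{x:Mx=r,\,M_1x=r_1\}$ is empty. This is a condition definable in the tame pair $(\FR_{\om},\CO_{\FR})$, because $\mu_{\FR}$ is $\emptyset$-definable there.

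Hence, for each $k$, the parameter set
\[
\Pi_k=\{(M,r)\in\RR^{(n-k)\times n}\times\RR^{n-k}:\exists\,\alpha\in X^\sharp,\ A^\RR_\alpha=\{x:Mx=r\}\}
\]
is definable in $(\FR_\om,\CO_\FR)$; the existential quantifier ranges over $\FR^n$ but $\Pi_k$ itself lies in $\RR$-coordinates, so by stable embedding it is $\CL_\om$-definable in $\RR_\om$. Translating back through our parametrization exhibits $\CA^\RR(X)\cap\Graff_k(\RR^n)$ as an $\RR_\om$-definable family; taking the union over $k$ gives the theorem. The main obstacle, and the only nontrivial input, is the stable embedding result itself: that every subset of $\RR^N$ definable in the tame pair $(\FR_\om,\CO_\FR)$ is already $\CL_\om$-definable. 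Once that is cited from the theory of tame pairs / $T$-convexity, the rest of the argument is formally parallel to the algebraic case.
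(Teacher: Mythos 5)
Your approach is essentially the paper's: both proofs pass to a tame-pair structure and invoke van den Dries's result (\cite[Prop.~8.1]{lou-limit}) that the structure induced on $\RR$ is just $\RR_{\om}$; you simply make the ``observe that $\CA^\RR(X)$ is definable in the pair'' step explicit by parametrizing affine subspaces as solution sets of linear systems. Two technical points need fixing, though. First, you work with the $T$-convex pair $(\FR_\om,\CO_\FR)$, but $\RR$ is \emph{not} definable in that structure (only the valuation ring $\CO_\FR$ and hence $\mu_\FR$ are), so the quantifiers ``$\forall M_1\in\RR^{1\times n}, \forall r_1\in\RR$'' in your minimality clause, and the assertion that $\Pi_k$ ``lies in $\RR$-coordinates,'' cannot be expressed there; you need the richer pair the paper uses, $(\FR_\om,\RR_\om,\st)$, which names $\RR$ (and then $\CO_\FR$, $\mu_\FR$, $\st$ are all definable from it). Stable embedding of the abstract residue sort $\CO_\FR/\mu_\FR$ in the $T$-convex pair is not the same thing as the cited Prop.~8.1, which concerns the pair with a predicate for the elementary submodel $\RR$. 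Second, your minimality clause is misstated: when $M_1$ is independent of the rows of $M$ the system $\{x:Mx=r,\ M_1x=r_1\}$ is \emph{never} empty, so the consequent of your implication is vacuously false. As it happens this makes your condition logically equivalent to the correct one (``there is no $M_1$ outside the row span of $M$ and no $r_1\in\RR$ with $M_1\alpha-r_1\in\mu_\FR$''), but you should state that directly rather than rely on the accident. With those repairs the argument is sound and parallel to the paper's.
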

\begin{proof} We consider the structure obtained by expanding the
o-minimal structure $\FR_{\om}$ by a predicate symbol  for the
real field and a function symbol $\st(x)$ for the standard part
map from $\CO_{\FR}$ into $\RR$. Thus we are working with a
pair of o-minimal structures $(\FR_{\om}, \RR_{\om}, \st)$ in
which the first structure is an elementary extension of the second
one and in addition the latter is Dedekind complete in the first.
Such an extension is called {\em tame }
 and if we let
$T$ be the theory of $\FR_{\om}$ then the theory of $(\FR_{\om}, \RR_{\om},
\st)$ is denoted  $T_{\tame}$. The model theory  of tame extensions was studied by
van den Dries and others, e.g.\ see \cite[Section 8]{lou-limit}. The main result
we need states:
\begin{fact}[{\cite[Proposition 8.1]{lou-limit}}] If $X\sub \RR^n$ is definable in $(\FR_{\om}, \RR_{\om}, \st)$
then $X$ is definable in $\RR_{\om}$. Moreover, if $\CF$ is a family of subsets of
$\RR^n$ that is definable in $(\FR_{\om}, \RR_{\om}, \st)$, then it is also
definable in $\RR_{\om}$.
\end{fact}
To be precise, it is only the first part of the above result which is proved in
\cite{lou-limit}, but the second part follows immediately by working in an arbitrary
model of $T_\tame$.

To complete the proof of Theorem~\ref{thm:asR-def}, we just need to observe that the
family $\mathcal A^\RR(X)$ is definable in $(\FR_{\om}, \RR_{\om}, \st)$.\end{proof}

 The
next step towards proving the Main Theorem  is the following analogue of Proposition~\ref{prop:main}.

\begin{prop}\label{omin-prop} For $\alpha\in \FR^n$  and
 an additive  subgroup $\Lambda \leq
  \RR^n$  whose
$\RR$-span is $\RR^n$,
 let $p(x)=\tp_{\om}(\alpha/\RR)$.
Then $\st(p(\FR)+\Lambda^\sharp)$ is a closed subset of $\RR^n$ and
\[ \cl(A_\alpha^\RR +\Lambda) = \st(p(\FR)+\Lambda^\sharp).\]
 \end{prop}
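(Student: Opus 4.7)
The plan is to mirror the proof of Proposition~\ref{prop:main} step by step, taking advantage of a key simplification in the o-minimal setting: since $\FR_\om$ is itself $\kappa$-saturated, the detour through semialgebraic approximations used in Theorem~\ref{thm:acvf-semi} and Corollary~\ref{cor:acvf-proj} becomes unnecessary. The $\CL_\om$-type $p(x)=\tp_\om(\alpha/\RR)$ can be manipulated directly, and the o-minimal analogs of Lemma~\ref{lem:Aalpha-bounded}, Lemma~\ref{lem:la-subspace}, and Proposition~\ref{prop:as-inter} go through identically (so I would first dispose of those in parallel statements).

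For closedness, a standard saturation argument (using the $\kappa$-saturation of $\FR_\full$) yields $p(\FR)+\Lambda^\sharp = \bigcap_{X\in p}(X+\Lambda)^\sharp$, so Claim~\ref{claim:cl-stand-part}(2) gives $\st(p(\FR)+\Lambda^\sharp)=\bigcap_{X\in p}\cl(X+\Lambda)$, an intersection of closed sets. For the easy inclusion $\cl(A_\alpha^\RR+\Lambda)\supseteq \st(p(\FR)+\Lambda^\sharp)$, take any $\beta\in p(\FR)$; since $\alpha$ and $\beta$ share the same $\CL_\om$-type over $\RR$, the real analog of the type-invariance of $A^\bullet_\bullet$ forces $A_\beta^\RR=A_\alpha^\RR$. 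Pick $\gamma\in (A_\beta^\RR)^\sharp$ with $\beta-\gamma\in\mu_\FR^n$ and apply Claim~\ref{claim:cl-stand-part}(1) to conclude $\st(\beta+\Lambda^\sharp)\subseteq \cl(A_\alpha^\RR+\Lambda)$.

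The reverse inclusion is proved by induction on $\dim_\RR A_\alpha^\RR$, the base case being trivial. For the inductive step let $H_\alpha=\Stab_\om^\mu(\alpha/\RR)$, which by Fact~\ref{fact:mustab} is an infinite $\CL_\om$-definable subgroup of $(\RR^n,+)$. The one new ingredient beyond the complex analogue is the observation that $H_\alpha$ is actually an $\RR$-linear subspace — this follows from the standard fact that definable subgroups of $(\RR^n,+)$ in o-minimal expansions of $\RR$ are closed $\RR$-subspaces. Given this, I would choose a complementary $\RR$-subspace $V_1$ with projection $\pi\colon\RR^n\to V_1$ along $H_\alpha$, set $\alpha_1=\pi(\alpha)$, and invoke the real analog of Lemma~\ref{lem:la-subspace} to write $A_\alpha^\RR=H_\alpha\oplus A_{\alpha_1}^\RR$, so $\dim_\RR A_{\alpha_1}^\RR<\dim_\RR A_\alpha^\RR$.

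Then apply the inductive hypothesis to $\alpha_1$ with the enlarged group $\Lambda_1=H_\alpha+\Lambda$ (which still $\RR$-spans $\RR^n$). By $\kappa$-saturation of $\FR_\om$, the definable projection $\pi$ maps $p(\FR)$ onto $p_1(\FR)$, where $p_1=\tp_\om(\alpha_1/\RR)$, replacing the role of Corollary~\ref{cor:acvf-proj}. From this point the argument is verbatim the algebraic one: for each $a\in A_{\alpha_1}^\RR$ one obtains a representation $a=\st(\beta+h''+\lambda)$ with $\beta\in p(\FR)$, $h''\in H_\alpha^\sharp$, $\lambda\in\Lambda^\sharp$; Fact~\ref{fact:kron} applied to $H=H_\alpha^\Lambda$ yields a compact $F\subseteq H$ with $H\subseteq F+\Lambda$; elementarity of $\FR_\full$ over $\RR_\full$ together with compactness of $F$ produces $h^*\in H$ with $a+h^*\in\st(\beta+\Lambda^\sharp)$; combining with $\beta+H_\alpha\subseteq p(\FR)+\mu_\FR^n$ (the defining property of the $\mu$-stabilizer), the already-established closedness of the right-hand side, and its $\Lambda$-invariance, one concludes $a+H_\alpha\subseteq\st(p(\FR)+\Lambda^\sharp)$. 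The main obstacle — and essentially the only place where genuinely new input is needed — is verifying that $\Stab_\om^\mu(\alpha/\RR)$ is an $\RR$-subspace; everything else is a transparent translation of the algebraic argument.
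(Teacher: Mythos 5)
Your proposal matches the paper's proof essentially step for step: it observes the same simplification (that $\kappa$-saturation of $\FR_\om$ makes $\pi(p(\FR))=p_1(\FR)$ immediate and obviates the ACVF detour), identifies the same extra input (that the definable subgroup $\Stab^\mu_\om(\alpha/\RR)$ is an $\RR$-subspace), and then runs the same induction as in Proposition~\ref{prop:main}. The argument is correct and is the one the paper gives.
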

\begin{proof} The  inclusion $\supseteq$ is similar to the argument in Proposition~\ref{prop:main}: We need to see that every element of the form
$a=\st(\beta+\lambda)$ for $\beta\in p(\FR)$ and $\lambda\in \Lambda^\sharp$
belongs to $\cl(A^{\RR}_{\alpha}+\Lambda)$. It is easy to see that
$A^{\RR}_{\beta}=A^{\RR}_{\alpha}$ and hence there exists $\beta'\in
{(A^{\RR}_{\alpha})}^\sharp$ such that $\beta-\beta'\in \mu_{\FR}^n$. It follows
that
\[a=\st(\beta'+\lambda)\in
\st({(A^{\RR}_{\alpha})}^\sharp+\Lambda^\sharp)=\cl(A^{\RR}_\alpha+\Lambda).\]

 We need to prove the  inclusion  $\cl(A_\alpha^\RR +\Lambda)
 \subseteq \st(p(\FR)+\Lambda^\sharp)$. As in the algebraic case,
we use induction on
$\dim A^\RR_{\alpha}$
 and  assume that $\dim_{\RR} A^\RR_{\alpha}>0$,  so
$\alpha\notin \CO_{\FR}^n$.

We denote by  $R_{\alpha}$  the $\mu$-stabilizer of the o-minimal type $p$, namely
$R_\alpha =\Stab_{\om}^\mu(\alpha/\RR)$, as introduced in Section~\ref{sec:o-minimal-case}.
 By
Fact~\ref{fact:mustab}, $\dim_{\RR} R_\alpha>0$.  The group
$R_{\alpha}$ is an $\CL_{\om}$-definable subgroup of $(\RR^n,+)$
 and hence it is an $\RR$-subspace. We write
$\RR^n=R_\alpha\oplus V_1$ for some complementary $\RR$-space
$V_1$ (with $\dim_{\RR} V_1<n$) and let $\pi:\RR^n\to V_1$ be the
projection along $R_{\alpha}$. We write accordingly
$\alpha=\alpha_0+\alpha_1$.

We let $p_1=\tp_{\om}(\alpha_1/\RR)$. Using the saturation of $\FR$ we have
\[\pi(p(\FR))=p_1(\FR).\]
(Note that the above equality is immediate, unlike the algebraic case
where we had to use Corollary~\ref{cor:acvf-proj}.)

 As in Lemma~\ref{lem:ha-is-in-la}, we have $R_\alpha\sub L(A^\RR_\alpha)$. It
follows, as in Lemma~\ref{lem:la-subspace}, that
$A^\RR_{\alpha}=R_{\alpha}+A^\RR_{\alpha_1}$. Thus, it is enough to show that for
any $a\in A^\RR_{\alpha_1}$, we have $a+R_{\alpha}\sub \st(p(\FR)+\Lambda^{\#})$.

The remaining argument is identical to the proof of Proposition~\ref{prop:main} so
we omit it.
 \end{proof}

We can now conclude the o-minimal analogue of Theorem~\ref{thm:main-1}.
\begin{thm}\label{cor:omin} If  $X\sub \RR^n$ is a definable set
  then
\[\cl(X+\Lambda)=\bigcup_{A\in \CA^\RR(X)} \cl(A+\Lambda).\]
\end{thm}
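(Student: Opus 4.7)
The plan is to mirror the proof of Theorem~\ref{thm:main-1} from the algebraic case, with $\CL_\val$-types over $\CC$ replaced throughout by $\CL_\om$-types over $\RR$ and with Proposition~\ref{prop:main} replaced by its o-minimal analogue Proposition~\ref{omin-prop}, which has already been established. The definability of the family $\CA^\RR(X)$ is a separate issue handled by Theorem~\ref{thm:asR-def}, so the only remaining content is to match the union over asymptotic $\RR$-flats with the standard-part description of the closure.

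First I would apply Claim~\ref{claim:cl-stand-part}(1) to rewrite
\[
\cl(X+\Lambda)=\st(X^\sharp+\Lambda^\sharp)=\bigcup_{\alpha\in X^\sharp}\st(\alpha+\Lambda^\sharp).
\]
Since $\st(\alpha+\Lambda^\sharp)$ depends only on $p=\tp_\om(\alpha/\RR)$ (any two realizations of $p$ are sent by $\st$ to the same set after adding $\Lambda^\sharp$, via $\kappa$-saturation of $\FR_\om$), I would reorganize this as a union over the set
\[
S_\om^\RR(X)=\{\tp_\om(\alpha/\RR)\colon \alpha\in X^\sharp\},
\]
obtaining $\cl(X+\Lambda)=\bigcup_{p\in S_\om^\RR(X)}\st(p(\FR)+\Lambda^\sharp)$.

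Next, for each $p\in S_\om^\RR(X)$ pick a realization $\alpha\in p(\FR)\cap X^\sharp$ and apply Proposition~\ref{omin-prop}: this gives $\st(p(\FR)+\Lambda^\sharp)=\cl(A_\alpha^\RR+\Lambda)$. Hence
\[
\cl(X+\Lambda)=\bigcup_{\alpha\in X^\sharp}\cl(A_\alpha^\RR+\Lambda)=\bigcup_{A\in\CA^\RR(X)}\cl(A+\Lambda),
\]
where the last equality uses the definition $\CA^\RR(X)=\{A^\RR_\alpha:\alpha\in X^\sharp\}$. This establishes the desired equality.

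The main potential obstacle is verifying that $\st(\alpha+\Lambda^\sharp)$ really does depend only on the $\CL_\om$-type of $\alpha$ (not on $\alpha$ itself); but this is immediate because $\alpha\in p(\FR)$ is the same condition as $\alpha$ belonging to every $\CL_\om$-definable set over $\RR$ in $p$, and $\Lambda^\sharp$ is the same across realizations. Equivalently, one may argue directly that both sides of Proposition~\ref{omin-prop} are invariants of the type, so reindexing by types and then by asymptotic $\RR$-flats loses no information. No further work is needed, since Proposition~\ref{omin-prop} has absorbed all of the delicate content (the induction on $\dim A^\RR_\alpha$, the use of the $\mu$-stabilizer, Fact~\ref{fact:kron}, and the compactness/saturation arguments) from the o-minimal counterpart of the algebraic proof.
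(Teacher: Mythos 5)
Your proposal is correct and matches the paper's proof: both rewrite $\cl(X+\Lambda)$ via Claim~\ref{claim:cl-stand-part}(1) as $\bigcup_{p\in S^\RR_{\om}(X)}\st(p(\FR)+\Lambda^\sharp)$ and then invoke Proposition~\ref{omin-prop} term by term. The only cosmetic difference is that you dwell on the (true but essentially automatic) point that $\st(p(\FR)+\Lambda^\sharp)$ and $A_\alpha^\RR$ are type-invariant, which the paper treats as immediate from $p(\FR)+\Lambda^\sharp=\bigcup_{\alpha\models p}(\alpha+\Lambda^\sharp)$.
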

\begin{proof} By Claim~\ref{claim:cl-stand-part} (1),
\[\cl(X+\Lambda)=\bigcup_{\alpha\in
X^\sharp}\st(\alpha+\Lambda^\sharp)=\bigcup_{p\in S^\RR_{\om}(X)}
\st(p(\FR)+\Lambda^\sharp),\]
where $S^\RR_{\om}(X)$ is the collection of all complete
$\CL_{\om}$-types on $X$ over $\RR$. Using Proposition~\ref{omin-prop} we obtain the
desired result.\end{proof}

\subsection{Neat families in the o-minimal context}
We proceed  similarly to Section~\ref{sec:famil-affine-subsp}.

We
denote by $\Graff_k(\RR^n)$ the Grassmannian variety of all affine
$k$-dimensional $\RR$-subspaces of $\RR^n$. For an  $\RR$-flat
$A\subseteq \RR^n$  we
denote by $L(A)$ the $\RR$-subspace  of $\RR^n$ whose translate is
$A$.

For  a subset $T\subseteq \Graff_k(\RR^n)$ we denote by $\RR\mathrm{L}(T)$ the
$\RR$-linear span of $\bigcup_{A\in T} L(A)$ in $\RR^n$.

Given an $\RR$-flat $A\subseteq \RR^n$ and an additive subgroup $\Lambda\leq \RR^n$
whose $\RR$-span equals $\RR^n$, we denote by $A^\Lambda$ the $\RR$-flat
$a+{L(A)}^\Lambda$, where $a\in A$.

\begin{defn}
A definable $T\sub \Graff_k(\RR)$
is called \emph{neat} if
\begin{enumerate}[(a)]
\item $T$ is a connected $\RR$-submanifold of $\Graff_k(T)$. \item For any
nonempty open subset $U\sub T$, we have $\RR\mathrm L(U)=\RR\mathrm L(T)$.
\end{enumerate}
\end{defn}

The notion of neatness helps us replace the irreducibility assumption in Proposition~\ref{prop:neat-dense}. We have:

\begin{prop}\label{prop:neat-dense-2}
Let $T\subseteq \Graff_k(\RR^n)$ be a definable neat family and $V=\RR
\mathrm{L}(T)$.
 Let $\Lambda< \RR^n$ be a countable additive subgroup of  $\RR^n$
 whose $\RR$-span is the whole $\RR^n$. Then,
  \begin{enumerate}
  \item The set
  $\{A\in T \colon  {L(A)}^\Lambda =V^\Lambda\}$
  is topologically dense in $T$.
\item  The set $\bigcup_{A\in T} (A+\Lambda)$ is topologically dense in
$\bigcup_{A\in
  T} (A+V+\Lambda)$ with respect to the Euclidean topology on
$\RR^n$.
\end{enumerate}
\end{prop}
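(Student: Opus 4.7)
The plan is to follow the proof of Proposition~\ref{prop:neat-dense} essentially verbatim, with the neatness condition (b) taking over the role played by irreducibility in the algebraic setting. The key simplification here is that $L(A)$ is already an $\RR$-subspace, so there is no need to pass to a largest $\CC$-subspace of $W$ as in the complex case.

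For clause (1), since $\Lambda$ is countable, only countably many $\RR$-subspaces of $\RR^n$ are defined over $\Lambda$. Because $T$ is a connected topological manifold it is a Baire space, so by the Baire Category Theorem it suffices to show that for every proper $\RR$-subspace $W\lneq V^\Lambda$ defined over $\Lambda$ the set $T_W=\{A\in T\colon L(A)\subseteq W\}$ is nowhere dense in $T$. The condition $L(A)\subseteq W$ is closed on $\Graff_k(\RR^n)$, hence $T_W$ is closed in $T$ and nowhere-denseness amounts to having empty interior. If $T_W$ contained a nonempty open subset $U\subseteq T$, then neatness condition (b) would give $V=\RRL(T)=\RRL(U)\subseteq W$, and since $W$ is defined over $\Lambda$ this would force $V^\Lambda\subseteq W$, contradicting $W\lneq V^\Lambda$.

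For clause (2), the argument from the algebraic case transfers verbatim. For each $A\in T$ let $\xi_A\in A$ be the point closest to the origin; the assignment $A\mapsto \xi_A$ is continuous. By Fact~\ref{fact:kron}, for every $A\in T$,
\[ \xi_A+L(A)^\Lambda+\Lambda \subseteq \cl(\xi_A+L(A)+\Lambda), \]
so $\bigcup_{A\in T}(\xi_A+L(A)^\Lambda+\Lambda)\subseteq \cl\bigl(\bigcup_{A\in T}(A+\Lambda)\bigr)$. It therefore suffices to show the former set is dense in $\bigcup_{A\in T}(\xi_A+V^\Lambda+\Lambda)$. Given a target point $\xi_A+v+\lambda$ with $v\in V^\Lambda$ and $\lambda\in\Lambda$, clause (1) supplies a sequence $A_j\to A$ in $T$ with $L(A_j)^\Lambda=V^\Lambda$; by continuity $\xi_{A_j}\to \xi_A$, so $\xi_{A_j}+v+\lambda\to \xi_A+v+\lambda$, and each $\xi_{A_j}+v+\lambda$ lies in $\xi_{A_j}+L(A_j)^\Lambda+\Lambda$.

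The only step requiring any care is the empty-interior argument in clause (1), and this is precisely what neatness (b) is designed to handle: any nonempty open subset of $T$ must span the full $\RRL(T)=V$, and the contradiction with $W\lneq V^\Lambda$ is immediate. The rest is purely topological and uses nothing beyond Baire category, continuity of $A\mapsto \xi_A$, and Fact~\ref{fact:kron}.
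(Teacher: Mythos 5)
Your proof follows the paper's argument essentially verbatim: clause (1) is the same Baire-category argument with neatness condition (b) supplying the empty-interior step, and clause (2) is the same reduction to clause (1) via continuity of $A\mapsto\xi_A$ and Fact~\ref{fact:kron}, which the paper simply references as ``identical to Proposition~\ref{prop:neat-dense}(2)'' while you helpfully spell it out. The argument is correct.
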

\begin{proof} (1) Since $\Lambda$ is countable there are countably many spaces
${L(A)}^\Lambda$ as $A$ varies in $T$. Using Baire Categoricity it is enough to prove
that for  any proper subspace $W\lneq V^\Lambda $ which is defined over $\Lambda$,
the set $L^{-1}[W]=\{A\in T:L(A) \sub W\}$ is nowhere dense in $T$. Because this set
is definable we just need to prove that it does not contain an open set. But since
$T$ is neat, for every open $U\sub T$ we have $\RRL(U)=V$, and in particular, $U$ is
not contained in $L^{-1}[W]$.

(2) The proof is identical to that of Proposition~\ref{prop:neat-dense}(2). \end{proof}

The next result will replace the decomposition of an algebraic variety into its
irreducible components.
\begin{thm}\label{thm:comb-neat1}
Let $T\sub \Graff_k(\RR^n)$ be a definable  family of $\RR$-flats in $\RR^n$. Then $T$
can be decomposed  into a finite union of neat families.
\end{thm}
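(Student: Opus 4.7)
The plan is to prove Theorem~\ref{thm:comb-neat1} by lexicographic induction on the pair $(\dim_\RR T,\, n - d_0(T))$ for an invariant $d_0(T)$ defined below. The base case $\dim T = 0$ is immediate, since $T$ is finite and each singleton is neat.

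For the inductive step, I first apply o-minimal $C^1$-cell decomposition to $T$, yielding a finite disjoint union of definable connected $C^1$-submanifolds of $\Graff_k(\RR^n)$. Condition (a) is then automatic for each piece, and it suffices to decompose a single such cell into neat families; so assume $T$ is itself a connected definable $C^1$-submanifold of $\Graff_k(\RR^n)$. Introduce the function $f\colon T \to \bigsqcup_{d=0}^{n}\Gr_d(\RR^n)$ defined by $f(A) = \bigcap_{O\ni A}\RRL(O)$, the intersection taken over open neighborhoods of $A$ in $T$. The family $\{\RRL(O)\}$ is decreasing under shrinking of $O$ and bounded in dimension, so the intersection stabilizes: $f(A) = \RRL(O)$ for all sufficiently small $O\ni A$. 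Consequently $f$ is definable and $\dim f$ is upper semi-continuous on $T$. Set $d_0(T) := \min_{A\in T}\dim f(A)$ and $T_{d_0} := \{A\in T : \dim f(A) = d_0(T)\}$, a definable nonempty open subset of $T$ on which $f$ is locally constant: for $A\in T_{d_0}$ and an open $O\ni A$ with $\RRL(O) = f(A)$, every $A' \in O$ satisfies $f(A')\subseteq f(A)$ of dimension $\geq d_0(T)$, forcing $f(A') = f(A)$.

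Cell-decomposing $T_{d_0}$ produces connected sub-cells $P_1,\dots,P_r$, each with $f$ equal to some constant subspace $W_i$. Each $P_i$ is then neat with $V_{P_i} = W_i$: for any open $O\subseteq P_i$ and $A\in O$, we have $L(A)\subseteq f(A)=W_i$, giving $\RRL(O)\subseteq W_i$, and conversely $W_i = f(A) \subseteq \RRL(O)$ by shrinking $O$, so $\RRL(O)=W_i$. These are the neat pieces extracted from $T_{d_0}$. For the closed complement $T' := T \setminus T_{d_0}$, apply cell decomposition; cells of dimension strictly less than $\dim T$ are dispatched by the outer induction on $\dim T$. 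For a cell $T''$ of $T'$ with $\dim T'' = \dim T$, the inclusion $T''\subseteq T$ of $\dim T$-dim submanifolds forces $T''$ to be open in $T$. Hence open neighborhoods of any $A\in T''$ agree in $T$ and in $T''$, yielding $f_{T''} = f_T|_{T''}$ and therefore $d_0(T'')\geq d_0(T) + 1$. The inner induction on $n - d_0$ then processes $T''$.

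The main obstacle is the critical identification of each full-dimensional cell $T''$ of $T'$ as an open subset of $T$, securing $f_{T''} = f_T|_{T''}$ and thereby the strict inequality $d_0(T'')>d_0(T)$. Given this, the lexicographic invariant $(\dim T,\,n-d_0)$ strictly decreases at each recursive step, ensuring well-foundedness: either $\dim T$ drops when recursing into a lower-dimensional cell, or $\dim T$ is preserved while $n-d_0$ drops when recursing into a full-dimensional open subcell of $T'$. The base of the inner induction occurs when $d_0(T) = \dim\RRL(T)$, at which point $f\equiv\RRL(T)$ on all of $T$ and every open $O\subseteq T$ satisfies $\RRL(O)=\RRL(T)$, so $T$ itself is neat.
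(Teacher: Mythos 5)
Your lexicographic induction differs from the paper's scheme: the paper partitions $T$ by the dimension of the germ subspace $V_A=f(A)$ and then shows \emph{every} piece of full $T$-dimension is neat, not just the piece where $\dim f$ is minimal, so a single induction on $\dim_\RR T$ suffices and no inner induction on $n-d_0$ is needed. Your scheme also works in principle, but as written it has a gap in the treatment of $T_{d_0}$.

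The gap: you cell-decompose $T_{d_0}$ and assert each resulting cell $P_i$ is neat. This is correct for the full-dimensional cells (which are open in $T$), but a cell decomposition of $T_{d_0}$ will in general also produce cells of dimension $<\dim_\RR T$, and for those the step ``conversely $W_i = f(A) \subseteq \RRL(O)$ by shrinking $O$'' fails: $f(A)$ is computed from neighborhoods of $A$ that are open in $T$, whereas a small open subset $O$ of a lower-dimensional $P_i$ contains no $T$-open neighborhood of $A$, so $\RRL(O)$ can be a proper subspace of $f(A)$. Concretely, take $T\subseteq\Graff_1(\RR^3)$ to be the connected $C^1$-submanifold parametrized by $(s,t)\in(0,1)\times(-1,1)$ with $A_{s,t}=\RR\cdot(1,\,s,\,\max(s-\tfrac12,0)^2+t)$; then $f\equiv\RR^3$ on all of $T$, so $T_{d_0}=T$, yet the $1$-cell $P$ given by $t=0$ is not neat, since $\RRL$ of the sub-arc with $s<\tfrac12$ is $2$-dimensional while $\RRL(P)=\RR^3$. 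The repair is easy and stays within your framework: after the initial cell decomposition $T$ is a connected $C^1$-submanifold, so $T_{d_0}$, being open in $T$, is an open submanifold with finitely many definably connected components; take those components as the $P_i$ rather than the cells of a further decomposition. Each is an open connected $C^1$-submanifold of $\Graff_k(\RR^n)$ on which $f$ is constant (being locally constant on $T_{d_0}$), and then your neatness argument applies verbatim. (Alternatively, keep the cell decomposition, declare only the full-dimensional cells neat, and send the lower-dimensional ones to the outer induction on $\dim_\RR T$, exactly as you already do with the lower-dimensional cells of $T'$.) The remaining steps --- invariance of domain to see that each full-dimensional cell $T''$ of $T'$ is open in $T$, the consequent agreement $f_{T''}=f_T$ on $T''$, and the strict inequality $d_0(T'')>d_0(T)$ --- are all correct, as is the termination argument for the lexicographic pair.
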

\begin{proof}
We use induction on $\dim_\RR(T)$.
If $\dim_\RR(T)=0$ then $T$ is finite and the theorem is trivial.

Assume  $\dim_\RR(T)>0$.

For  $U_1\sub U_2\subseteq T$, we have $\RRL(U_1)\sub
\RRL(U_2)$. Hence, for
$A\in T$ there exists a neighborhood $U\sub T$ of $A$ and  a subspace $V_A\sub
\RR^n$ such that for every  neighborhood  $U'\sub U$ of $A$ we have $\RR
\mathrm{L}(U')=V_A$.

The map $A\mapsto V_A$ is definable hence we may partition $T$ into finitely many
connected submanifolds $T_1,\ldots, T_r$, such that on each $T_i$ the
dimension of $V_A$ is constant. By induction, it is enough to prove that those $T_i$
of maximal dimension are neat. So we assume that $\dim T_i=\dim T$ and
$T_i$ is an open subset of $T$.

We need to show that  for each nonempty open $W\sub T_i$, we have
$\RRL(W)=\RRL(T_i)$.

We first claim that for each $A\in T_i$, there is a neighborhood $U$ of $A$
such that $V_A=V_B$ for all $B\in U$. Indeed, pick $U\sub T_i$ such that $\RR
\mathrm L(U)=V_A$. By definition, for every $B\in U$ we have $V_B\sub V_A$, but
because $\dim V_B=\dim V_A$ we must have $V_A=V_B$ for all $B\in U$.

Now,  since $T_i$ is connected, it easily follows that for all $A,B\in T_i$, we have
$V_A=V_B$.

To finish we just note that for every $A\in T_i$, we have $L(A)\sub V_A$, so
$\RRL(T_i)=V_A=\RRL(W)$, for every nonempty open $W\sub T_i$.\end{proof}

\subsection{Proof of the main theorem}

We recall the Main Theorem in the o-minimal setting:
\begin{thm}
\label{thm:main1-omin}  Let $X\subseteq \RR^n$ be a
  closed set definable in an o-minimal expansion $\RR_{\om}$ of the real field.
   There are linear $\RR$-subspaces  $V_1,\dotsc,V_m \sub
 \RR^n$ of positive dimension, and for each $i=1,\ldots, m$ a definable closed
 $C_i\sub V_i^\perp$,  such that for any lattice
 $\Lambda<\RR^n$ we have
\[  \cl(X+\Lambda) =(X+\Lambda)
    \cup\bigcup_{i=1}^m (C_i+V_i^\Lambda+\Lambda).
\]

In addition:

\begin{enumerate}[(i)]
 \item
For each $i=1,\ldots, m$ we have $\dim C_i<\dim X$

\item For each $V_i$ that is maximal among $V_1,\ldots, V_m$ the set $C_i\sub \RR^n$
is bounded, and in particular $C_i+V_i^{\Lambda}+\Lambda$ is a closed set.
\end{enumerate}
\end{thm}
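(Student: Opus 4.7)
The plan is to mirror the proof of Theorem~\ref{thm:main0}, substituting the o-minimal tools from the previous subsections for their algebraic counterparts. By Theorem~\ref{cor:omin} I begin with
\[\cl(X+\Lambda) = \bigcup_{A \in \CA^\RR(X)} \cl(A+\Lambda),\]
where by Theorem~\ref{thm:asR-def} the family $\CA^\RR(X)$ is definable in $\RR_\om$ as a subset of $\bigcup_{k} \Graff_k(\RR^n)$. Since $X$ is closed we have $\st(X^\sharp)=X$, so (by the o-minimal analogue of Lemma~\ref{lem:Aalpha-bounded}) $\CA^\RR(X)\cap \Graff_0(\RR^n)$ may be identified with $X$. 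I then apply Theorem~\ref{thm:comb-neat1} to decompose the positive-dimensional part of $\CA^\RR(X)$ into finitely many neat definable families $T_1,\ldots,T_m$.

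For each $i$, set $V_i=\RRL(T_i)$ (of positive dimension by definition). Proposition~\ref{prop:neat-dense-2}(2) gives
\[\cl\Bigl(\bigcup_{A\in T_i}(A+\Lambda)\Bigr)=\cl\Bigl(\bigcup_{A\in T_i}(A+V_i+\Lambda)\Bigr).\]
Because $L(A)\subseteq V_i$ for each $A\in T_i$, the intersection $(A+V_i)\cap V_i^\perp$ is a single point $c_A$, and $A+V_i=c_A+V_i$. The map $A\mapsto c_A$ is definable and continuous, so I define $C_i$ to be the topological closure in $V_i^\perp$ of $\{c_A:A\in T_i\}$: a definable closed set. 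Using Fact~\ref{fact:kron} to pass from $V_i$ to $V_i^\Lambda$ inside the closures (valid because $\Lambda$ is a lattice), the desired displayed formula assembles exactly as in the algebraic proof.

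For clause (i) I would use $\dim_\RR C_i\le \dim_\RR T_i$ (via the definable surjection $A\mapsto c_A$) and argue $\dim_\RR T_i<\dim_\RR X$ by an o-minimal analogue of the projective completion argument of Theorem~\ref{thm:main0}. The natural tool is the tame pair $(\FR_\om,\RR_\om,\st)$ already used to prove Theorem~\ref{thm:asR-def}: each $A\in T_i$ of positive dimension arises from an unbounded $\alpha\in X^\sharp$, and the $\RR$-flats produced may be interpreted as definable ``directions at infinity'' of $X$, which form a definable set of strictly smaller o-minimal dimension than $X$.

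For clause (ii) I would mimic the algebraic argument. Suppose $V_i$ is maximal among $V_1,\ldots,V_m$ but $C_i$ is unbounded. Using the saturation of $\FR_\om$ together with the observation $L(A_\alpha^\RR)\subseteq W \Leftrightarrow \alpha\in W^\sharp+\CO_\FR^n$, I produce finitely many proper subspaces $W_1,\ldots,W_l\lneq V_i$ and a radius $R$ such that every $A\in T_i$ satisfies either $A\subseteq V_i+B(0,R)$ or $L(A)\subseteq W_j$ for some $j$. The definable continuous surjection $h\colon T_i\to C_i$, $A\mapsto c_A$, then produces a nonempty open subset $h^{-1}(C_i\setminus \overline{B(0,R)})$ of $T_i$ that is covered by the sets $T_i\cap L^{-1}[W_j]$, each of strictly smaller dimension by neatness of $T_i$, a contradiction. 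The main obstacle I anticipate is clause (i): unlike in the algebraic setting, projective completion is not available off the shelf, so establishing $\dim_\RR T_i<\dim_\RR X$ requires a careful o-minimal compactification argument via the tame pair.
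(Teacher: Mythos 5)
Your proof of the main equality and of clause~(ii) mirrors the paper's argument faithfully: Theorem~\ref{cor:omin}, Theorem~\ref{thm:asR-def}, Theorem~\ref{thm:comb-neat1}, Proposition~\ref{prop:neat-dense-2}, the definition of $c_A$ and $C_i$, and (for (ii)) the saturation/covering/connectedness contradiction all match what the paper does.

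Clause~(i), however, has a genuine gap, and you have correctly sensed that something is off, but you have misdiagnosed the problem. You propose the chain $\dim_\RR C_i \le \dim_\RR T_i < \dim_\RR X$. The first inequality is fine, but the second is false in general and is not what either the algebraic or the o-minimal argument in the paper establishes. For instance, take $X=\{(x,y,xy)\colon x,y\in\RR\}\subseteq\RR^3$, a surface of dimension $2$. The one-dimensional asymptotic flats $\{(t,c,ct+d)\colon t\in\RR\}=(0,c,d)+\RR(1,0,c)$, parametrised by $(c,d)\in\RR^2$, form a neat definable family $T$ with $\dim_\RR T=2=\dim_\RR X$, yet $V=\RRL(T)=\RR\times 0\times\RR$ and $C=\{(0,c,0)\colon c\in\RR\}$ has dimension $1<2$. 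So the map $A\mapsto c_A$ can collapse dimension, and any argument that routes through a bound on $\dim_\RR T_i$ is doomed. Even in Theorem~\ref{thm:main0}(i) the paper never bounds $\dim T_i$; it shows directly that $C$ injects (via the standard part map) into the frontier $\overline{X}^Z\setminus X$ after projective completion in the $V$-directions.

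The paper's actual proof of~(i) is a compactification in the $V$-directions done by hand and avoids the tame-pair machinery you anticipated needing. Identify $\RR^n$ with $V^\perp\times V$ and set
\[
X'=\bigl\{(x_1,x_2,1/|x_2|)\in V^\perp\times V\times\RR\colon x_2\neq 0,\ (x_1,x_2)\in X\bigr\},
\]
then project to $Y'\subseteq V^\perp\times\RR$ by $(x_1,x_2,1/|x_2|)\mapsto(x_1,1/|x_2|)$. For each $c\in C$ one chooses an unbounded $\alpha\in X^\sharp$ with $A_\alpha^\RR\subseteq c+V$; writing $\alpha=\alpha_1+\alpha_2$ along $V^\perp\oplus V$ gives $\st(\alpha_1)=c$ and $\st(1/|\alpha_2|)=0$, so $(c,0)\in\cl(Y')$. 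Since no point of $Y'$ has last coordinate $0$, these points lie in $\mathrm{Fr}(Y')$, and o-minimality gives $\dim_\RR C\le\dim_\RR\mathrm{Fr}(Y')<\dim_\RR Y'\le\dim_\RR X'\le\dim_\RR X$. This is the missing ingredient: it bounds $\dim C_i$ without ever bounding $\dim T_i$.
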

\begin{proof}
As in the algebraic case  it is sufficient to find definable $C_i$'s  as above that are
that are not necessarily closed, and then
replace  each $C_i$ with its topological closure  if needed.

By Theorem~\ref{cor:omin}  for any lattice $\Lambda\sub \RR^n$ we have
\[\cl(X+\Lambda)=\bigcup_{A\in \CA^\RR(X)} (\cl(A)+\Lambda),\]
and the family $\CA^\RR(X)$ is definable.

 By Theorem~\ref{thm:comb-neat1}, applied to each $\CA^\RR(X)\cap \Graff_k(\RR)$
for $k>0$,  we
decompose  $\CA^\RR(X)\setminus X$ into finitely many neat
sets
\begin{equation}
\label{eq:Ti-omin}\CA^\RR(X)\setminus X= T_1\cup\cdots \cup T_m.
\end{equation}

We now finish the proof exactly as in Theorem~\ref{thm:main0}, with
Proposition~\ref{prop:neat-dense-2} replacing
 Proposition~\ref{prop:neat-dense}, and conclude: There are linear subspaces $V_1,\ldots, V_m\sub
\RR^n$ of positive dimension and definable sets $C_i\sub V_i^{\perp}$ such that for
any lattice $\Lambda\sub \RR^n$ we have:
\[ \cl(X+\Lambda)= (X+\Lambda)\cup \bigcup_{i=1}^r (C_i+V_i^{\Lambda}+\Lambda).\]

This ends the proof of the main statement of
Theorem~\ref{thm:main1-omin}.
We are left to prove the remaining two clauses in the theorem.

\subsubsection{Proof of Clause (i)}

We need to prove that each $C_i$ in the description of $\cl(X+\Lambda)$ has
dimension smaller than $\dim_\RR X$. For that we recall first that for each $c\in
C_i$ there exists $\alpha\in X^\sharp\smallsetminus \CO_\FR^n$ such that
$A_\alpha^\RR+V_i= c+V_i$.

We let $V=V_i$ and identify $\RR^n$ with $V^\perp \times V$. The idea of the proof
is that $C$ corresponds to the projection of the frontier of $X$ in $V^\perp \times
V^*$, where $V^*$ is the one-point compactification of $V$. By o-minimality, this
frontier will have dimension smaller than $\dim_\RR X$. We now describe the details.
Let
\[X'=\{(x_1,x_2, 1/|x_2|)\in V^{\perp} \times V\times  \RR :x_2\neq 0\, \&\, (x_1,x_2)\in X\}.\]
 Consider the projection $(x_1,x_2,1/|x_2|)\mapsto
(x_1,1/|x_2|)$ of $X^{'}$  into $V^{\perp}\times \RR$, and the image of $X'$ under
this projection,  call it $Y'$.
\\

 \noindent{\bf Claim.} For every $c\in C$,
 $(c,0)$ is in the closure of $Y'$.

\begin{proof} Assume that $c+V=A^\RR_\alpha+V\in \CF_V$ for $\alpha\in
X^\sharp\setminus \CO^n_{\RR}$. Because  $L(A^\RR_\alpha)\sub V$ it follows that
$\alpha\in c+V^\sharp+\mu_\FR^n$  and hence $\alpha$ can be written as $\alpha_1+\alpha_2$
with
 $\st(\alpha_1)=c$ and $\alpha_2\in V^\sharp$ necessarily unbounded. We thus have  and
 $\st(1/|\alpha|)=0$ and therefore
 $(c,0)$ is in $\cl(Y')$. This ends the proof of the claim. \end{proof}

 In order to finish we note that every element of the form
 $(c,0)\in V^\perp \times \RR$ is necessarily in
 the frontier $\mathrm{Fr}(Y')=\cl(Y')\setminus Y'$, thus
\[\dim_\RR C\leq \dim_\RR \mathrm{Fr}(Y') <\dim_\RR Y',\]
(the last equality follows from
o-minimality).

 But $\dim_{\RR} Y'\leq \dim_\RR X'\leq \dim_{\RR} X$, so $\dim_\RR C<\dim_\RR X$.

This ends the proof of Clause (i).

\subsubsection{Proof of Clause (ii)}
The proof below follows closely the proof of Clause (ii) in the
algebraic case.

We  start with $T=T_i$ as in \eqref{eq:Ti-omin}, and $V=V_i$, $C=C_i\sub V_i^\perp$
the corresponding linear space and definable set, respectively. Namely, $T\sub
\Graff_{k_i}(\RR)$ is neat, $V=\RRL(T)$, and
\[ C=\{ c\in V^\perp \colon A\subseteq c+V \text{ some } A\in T\}. \]
We assume that $C$ is unbounded and we  show that there exists $\alpha^*\in X^\sharp$
such that $L(A^\RR_\alpha)$ properly contains $V$. This implies that $V$ cannot be
maximal among the $V_j$'s.

Let $\Sigma$ be the collection of all proper $\RR$-subspaces of $\RR^n$ which do not
contain $V$.

As in the algebraic case,   if no such $\alpha^*$  exists then the intersection
\[
X^\sharp\cap \{ u\in  \FR^n \colon u\notin (V^\sharp+\CO_\FR^n) \} \cap\bigcap_{W\in
\Sigma} \{ u\in  \FR^n  \colon u \notin (W^\sharp+\CO_\FR^n) \}
\]
would be  empty, and we would conclude that there is a closed ball $B\sub \RR^n$ and
$W_1,\ldots, W_l\in \Sigma$ such that for every $A\in \CA_\RR(X)$,
\begin{enumerate}
\item either $A\subseteq V+B$; \item  or  $L(A) \subseteq W_i$.
\end{enumerate}

For every $A\in T$ we have $L(A)\sub V$, hence $L(A)\sub W_i$ implies that $L(A)\sub
W_i\cap V$. Because $T$ is neat and each $W_i\cap V$ is a proper subspace of $V$ the
dimension of the set $T'=\bigcup_{i=1}^m \{A \in T: L(A)\sub W_i\}$  is smaller than
$\dim_{\RR} T$. So, for all $A\in T$ outside a set $T'$ of smaller dimension, we
have $A\subseteq V+B$. Let us see that this is impossible.

For $A\in T$, we denote by $c(A)\in V^\perp$  the unique $c\in V^\perp$ so that
$A\sub c+V$.  The map $A\mapsto c(A)$ is continuous and surjective so the pre-image
of $C\setminus B$ is a non-empty, open subset of $T$. Since $T$ is a connected
manifold the intersection of this pre-image with $T\setminus T'$ is non-empty, so
there exists $A\in T\setminus T'$ with $A\notin V+B$. Contradiction. This ends the
proof of Clause (ii) and  with that we end the proof of
Theorem~\ref{thm:main1-omin}.\end{proof}

\section{An example}
\label{sec:examples}

In this section we provide a counter-example to conjectures 
\cite[Conjecture 1.2]{UY} and \cite[Conjecture 1.6]{flow}.

Let $X$
be  the surface
\[  X=\{ (x,y,z)\in \CC^3 \colon  x(1-yz)=1\} \]
and $\Lambda=\ZZ^3+\mathrm{i}\ZZ^3$.

For $i=1,2,3$ we denote by $\Pi_i$ the corresponding coordinate plane in $\CC^3$,
namely $\Pi_1=0{\times}\CC{\times} \CC$,
 $\Pi_2=\CC{\times}0{\times} \CC$ and
$\Pi_3=\CC{\times}\CC{\times} 0$.  Notice that all $\Pi_i$ are defined
over $\Lambda$, hence each $\Pi_i+\Lambda$ is closed.

\begin{lem}
  \begin{equation}
\label{eq:5}
\cl(X+\Lambda)=
(X+\Lambda)\cup\bigcup_{i=1}^3
(\Pi_i+\Lambda)
\cup
\bigcup_{t\in \CC^*} \bigl((0,t,\tfrac{1}{t})+\CC{\times} 0 {\times} 0 +\Lambda\bigr).
  \end{equation}
\end{lem}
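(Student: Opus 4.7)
The plan is to apply Theorem~\ref{thm:main-1}, reducing the computation of $\cl(X+\Lambda)$ to computing, for each $\alpha\in X^\sharp$, the asymptotic $\CC$-flat $A^\CC_\alpha$ and then taking the closure of $A^\CC_\alpha+\Lambda$. The three coordinate planes $\Pi_i$ and the line $\CC\times\{0\}\times\{0\}$ are each $\CC$-spanned by elements of $\Lambda = \ZZ^3 + \ri\ZZ^3$, so any affine flat parallel to one of them satisfies $\cl(A+\Lambda) = A+\Lambda$; this matches every candidate term on the right of \eqref{eq:5}, so the problem reduces to identifying all $A^\CC_\alpha$ and grouping them.

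The case analysis for $\alpha=(\alpha_1,\alpha_2,\alpha_3)\in X^\sharp$ is driven by the relation $\alpha_1(1-\alpha_2\alpha_3)=1$ together with the pattern of which $\alpha_i$ lie in $\CO_\FC$. If $\alpha\in\CO_\FC^3$, Lemma~\ref{lem:Aalpha-bounded} gives $A^\CC_\alpha=\st(\alpha)\in X$, contributing $X+\Lambda$. If $\alpha_1\notin\CO_\FC$, the relation forces $\alpha_2\alpha_3\in 1+\mu_\FC$; in the sub-case $\alpha_2,\alpha_3\in\CO_\FC$, one has $\st(\alpha_2)=t\in\CC^*$ and $\st(\alpha_3)=1/t$, and the matrix criterion of Claim~\ref{claim:lin-alg-cont} yields $A^\CC_\alpha = (0,t,1/t)+\CC\times\{0\}\times\{0\}$---exactly the lines appearing in the last union of \eqref{eq:5}. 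In every remaining configuration at least one coordinate of $\alpha$ lies in $\mu_\FC$; whenever $\alpha_i\in\mu_\FC$, Claim~\ref{claim:lin-alg-cont} forces the $i$-th coordinate of $A^\CC_\alpha$ to be $0$, so $A^\CC_\alpha\subseteq\Pi_i$, contributing only to $\Pi_i+\Lambda$.

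To finish, I would check that each of the planes $\Pi_1,\Pi_2,\Pi_3$ is actually attained as $A^\CC_\alpha$ for some $\alpha\in X^\sharp$, ensuring the full $\Pi_i+\Lambda$ appears. For $\Pi_1$, take $\alpha_1\in\mu_\FC$ and $\alpha_2,\alpha_3$ both unbounded with $\alpha_2/\alpha_3$ having no standard part (easy to arrange using $\alpha_2\alpha_3=1-1/\alpha_1$); the line analysis fails, forcing $\dim_\CC A^\CC_\alpha\geq 2$, while by the preceding paragraph the first coordinate of $A^\CC_\alpha$ vanishes, pinning $A^\CC_\alpha=\Pi_1$. Symmetric choices yield $\Pi_2$ and $\Pi_3$. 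Taking closures under $+\Lambda$ and collecting the terms produced by the case analysis gives the right-hand side of \eqref{eq:5}. The main obstacle is the routine-but-tedious verification that the case split is exhaustive and that the proposed $A^\CC_\alpha$ really is the \emph{smallest} containing $\CC$-flat in each case; both points reduce to the linear-algebra criterion in Claim~\ref{claim:lin-alg-cont}, together with the simple observation that a $\CC$-flat of dimension $0$ cannot absorb an unbounded coordinate.
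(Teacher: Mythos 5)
Your proposal is correct and follows essentially the same route as the paper: invoke Theorem~\ref{thm:main-1}, then run a case analysis on which coordinates of $\alpha\in X^\sharp$ are bounded, infinitesimal, or unbounded, using the defining relation $\alpha_1(1-\alpha_2\alpha_3)=1$ to constrain the possibilities and Claim~\ref{claim:lin-alg-cont} to identify $A^\CC_\alpha$ in each case. The paper simply writes down explicit witnesses (e.g.\ $(\tfrac{1}{1-\delta^3},\delta,\delta^2)$ for $\Pi_1$ and $(\tfrac{t}{\varepsilon},t-\varepsilon,\tfrac{1}{t})$ for the lines) where you describe the required $\alpha$'s more abstractly, but the underlying argument is the same.
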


\begin{proof}
We use the formula
\[ \cl(X+\Lambda)= (X+\Lambda)\cup \bigcup_{A\in \CA^*(X)} \cl(A+\Lambda).\]

\medskip
\noindent\textbf{Inclusion $\mathbf\supseteq$.}
We first show that $\cl(X+\Lambda)$ contains
every set in the union on the right side of \eqref{eq:5}.

Obviously $\cl(X+\Lambda)$  contains $X+\Lambda$.

For $\Pi_1+\Lambda$, consider $\alpha=(\frac{1}{1-\delta^3}, \delta,\delta^2)$,
with nonzero $\varepsilon\in
\mu_\FC$ and  $\delta=1/\varepsilon$.
We show that $\Pi_1=A_\alpha^\CC(X)$.
Clearly $\delta\notin
\CO_\FC$ and   $\alpha\in
X^\sharp \setminus \CO^3_\FC$.  Since $\frac{1}{1-\delta^3}\in \mu_\FC$, we have
$\alpha \in \Pi_1^\sharp+\mu_\FC^3$.  Also,  $\delta$
and $\delta^2$ are $\CC$-linearly independent modulo $\CO_\FC$ (i.e.\ for $c_1,c_2\in
\CC$, if  $c_1\delta + c_2\delta^2\in \CO_\FC$ then $c_1=c_2=0$),
hence   $\alpha \notin
L^\sharp+\CO^3_\FC$ for any proper $\CC$-subspace $L\subsetneq  \Pi_1$.  Thus
$A^\CC_\alpha=\Pi_1$ and, by Theorem~\ref{thm:main-1},
\[ \Pi_1+\Lambda \subseteq \cl(X+\Lambda). \]

For $\Pi_2+\Lambda$,
consider   $\alpha=(\delta^2, \varepsilon -\varepsilon^3, \delta)$.
We have
that $\alpha\in X^\sharp \setminus \CO^3_\FC$  and as above it is easy to see that
$A_\alpha^\CC= \Pi_2$. Hence
\[ \Pi_2+\Lambda \subseteq \cl(X+\Lambda). \]

Similarly, for  $\alpha=(\delta^2,\delta, \varepsilon -\varepsilon^3)$,  we have
$A^\CC_\alpha= \Pi_3$, Hence
\[ \Pi_3+\Lambda \subseteq \cl(X+\Lambda). \]

For $t\in \CC^*$ and $\epsilon\in \mu_\FC$, let
$\alpha_t=(\frac{t}{\varepsilon},t-\varepsilon,\frac{1}{t})$. It is easy to see that
$\alpha_t\in X^\sharp \setminus \CO_\FC^3$ and $A_{\alpha_t}^\CC =
(0,t,\frac{1}{t})+\CC{\times}0{\times}0$.

Thus the right side of \eqref{eq:5} is contained in $\cl(X+\Lambda)$.

\medskip
\noindent\textbf{Inclusion $\mathbf \subseteq$.}
To show that $\cl(X+\Lambda)$ is contained in the right side of \eqref{eq:5}, it is
sufficient to show that for any $\alpha\in X^\sharp\setminus \CO^3_\FC$ the set
$\cl(A^\CC_\alpha+\Lambda)$ is contained in the right side of \eqref{eq:5}.

Let $\alpha=(\alpha_1,\alpha_2,\alpha_3)\in X^\sharp\setminus
\CO^3_\FC$.

 Observe that if some $\alpha_i\in \mu_\FC$
then $\alpha\in \Pi_i^\sharp+\mu_\FC$. In this case $A^\CC_\alpha \subseteq \Pi_i$
and $\cl(A^\CC_\alpha+\Lambda)\subseteq \Pi_i+\Lambda$, hence it is contained in the
right side.

The only remaining case is when $\alpha\in X^\sharp$ is unbounded and
none of $\alpha_i$ is in $\mu_\FC$.  It is easy to see that in this
case $\alpha_1$ must be unbounded, $\alpha_2\in t+\mu_\FC$, and
$\alpha_3\in \frac{1}{t}+\mu_\FC$ for some $t\in \CC^*$.  Then
$A_\alpha= (0,t,\frac{1}{t})+\CC{\times}0{\times}0$,
$\cl(A_\alpha+\Lambda)=
(0,t,\frac{1}{t})+\CC{\times}0{\times}0+\Lambda$, and it is contained
in the right side of \eqref{eq:5}.
\end{proof}

Thus in the  notations of the main theorem we can write $\cl(X+\Lambda)$ as
\[ \cl(X+\Lambda)=
(X+\Lambda)\cup\bigcup_{i=1}^4(C_i+V_i+\Lambda), \] where $V_i=\Pi_i$ for
$i=1,\dotsc,3$, $C_1=C_2=C_3= 0{\times 0}{\times 0}$, $V_4=\CC{\times} 0 {\times}
0$, and $C_4=\{(x,y,z)\in \CC^3 \colon
  x=0\, ,yz=1\}$.

\medskip
 Consider the projection map $\pi:\CC^3\to E^3$, where
$E$ is the elliptic curve $\CC/(\ZZ+i\ZZ)$. The set $\pi(C_4+V_4)$ is just
$\pi(C_4)+E{\times} 0{\times} 0$ and it is not hard to see that this
set is not contained in any proper real analytic subvariety of $E^3$.

Thus  $\cl(\pi(X))$ cannot be written as the union of $\pi(X)$
with a real analytic subvariety of $E^3$. Because $X$ is  semialgebraic, this
shows the failure of the  Conjecture  1.6 from \cite{flow}.

For the same reason   $\cl(\pi(X))$ cannot
be written as the union of $\pi(X)$ with finitely many real weakly special
subvarieties of $E^3$. This shows also the failure of  Conjecture 1.2
from \cite{UY}.

\bibliographystyle{acm}
\bibliography{flows-paper-v2}

\end{document}